\theoremstyle{plain}  %
\numberwithin{equation}{section}
\newtheorem{theorem}{Theorem}[section]  %
\newtheorem{lemma}[theorem]{Lemma}  %
\newtheorem{proposition}[theorem]{Proposition}  %
\theoremstyle{definition}  %
\newtheorem{definition}[theorem]{Definition}  %
\newtheorem{example}[theorem]{Example}  %
\newtheorem{conjecture}[theorem]{Conjecture}  %
\theoremstyle{remark}  %
\newcommand\floor[2]{\left\lfloor \tfrac{#1}{#2} \right\rfloor}
\newcommand\ceil[2]{\left\lceil\tfrac{#1}{#2} \right\rceil}
\newcommand{\Perm}[1]{\mathsf{Perm}\left( #1 \right)}
\renewcommand{\S}{\mathcal{S}}
\newcommand{\T}{\mathcal{T}}
\newcommand{\odd}{\mathrm{odd}}
\newcommand{\even}{\mathrm{even}}
\newcommand\mycom[2]{\genfrac{}{}{0pt}{}{#1}{#2}}
\newcommand{\ds}{\displaystyle}
\newcommand{\des}{\mathrm{des}}
\newcommand{\sn}{\mathfrak{S}_n}
\newcommand{\s}{\mathfrak{S}}
\newcommand{\sym}{\mathfrak{S}}
\newcommand{\GETOUT}[1]{}
\newcommand{\aleft}[2]{\mathsf{left}_{#1}(#2)}
\newcommand{\aright}[2]{\mathsf{right}_{#1}(#2)}
\newcommand{\Tack}[2]{\mathsf{Revstack}_{#1 , #2}}
\newcommand{\Stack}[2]{\mathsf{Stack}_{#1 , #2}}
\newcommand{\rev}{\mathsf{rev}}
\newcommand{\id}{\mbox{id}}
\def\revstack{revstack}
\DeclareMathOperator{\LittlePerm}{{\mathsf{Perms}}}
\def\diagramOne{
        \begin{tikzpicture}[level distance=10mm]
        \tikzstyle{every node}=[fill=white!60,inner sep=1pt]
        \tikzstyle{level 1}=[sibling distance=30mm]
        \tikzstyle{level 2}=[sibling distance=20mm]
        \tikzstyle{level 3}=[sibling distance=10mm]
        \tikzstyle{level 4}=[sibling distance=5mm]
	\node at (-5,-2) {$\Tree(\pi)~=$};
        \node at (0,0) {11}
                child {node {10}
                        child {node {9}
                                child {node {8}
					child {edge from parent[draw=none]}
					child{ node {7}}
					}
                                child{ node {6}
					child{ node {4}}
					child{ node {1}}
					}
                                }
                        child {node {5}
                                child{ node {3}
					child{ node {2}}
					child {edge from parent[draw=none]}
					}
                                child {edge from parent[draw=none]}
                                }
                }
		child {edge from parent[draw=none]};
        \end{tikzpicture}
        }
\def\diagramTwo{
        \begin{tikzpicture}[level distance=10mm]
        \tikzstyle{every node}=[fill=white!60,inner sep=1pt]
        \tikzstyle{level 1}=[sibling distance=30mm]
        \tikzstyle{level 2}=[sibling distance=20mm]
        \tikzstyle{level 3}=[sibling distance=10mm]
        \tikzstyle{level 4}=[sibling distance=5mm]
        \node {$v_{11}$}
                child {node {$v_{10}$}
                        child {node {$v_8$}
                                child {node {$v_5$}
					child {edge from parent[draw=none]}
					child{ node {$v_1$}}
					}
                                child{ node {$v_6$}
					child{ node {$v_2$}}
					child{ node {$v_3$}}
					}
                                }
                        child {node {$v_9$}
                                child{ node {$v_7$}
					child{ node {$v_4$}}
					child {edge from parent[draw=none]}
					}
                                child {edge from parent[draw=none]}
                                }
                }
		child {edge from parent[draw=none]};
        \end{tikzpicture}
        }
\def\diagramThree{
        \begin{tikzpicture}[level distance=10mm]
        \tikzstyle{every node}=[fill=white!60,inner sep=1pt]
        \tikzstyle{level 1}=[sibling distance=10mm]
        \tikzstyle{level 2}=[sibling distance=20mm]
        \tikzstyle{level 3}=[sibling distance=10mm]
        \tikzstyle{level 4}=[sibling distance=5mm]
	\node at (-1,-0.5) {$T_7 ~ =$};
        \node at (0,0) {8}
		child {edge from parent[draw=none]}
		child{ node {7}};
        \node at (2,0) {6}
		child{ node {4}}
		child{ node {1}};
        \node at (4,0) {3}
		child{ node {2}}
		child {edge from parent[draw=none]};
        \end{tikzpicture}
        }
\def\diagramFour{
        \begin{tikzpicture}[level distance=10mm]
        \tikzstyle{every node}=[fill=white!60,inner sep=1pt]
        \tikzstyle{level 1}=[sibling distance=30mm]
        \tikzstyle{level 2}=[sibling distance=20mm]
        \tikzstyle{level 3}=[sibling distance=10mm]
        \tikzstyle{level 4}=[sibling distance=5mm]
	\node at (-5,-2) {$h'(\Tree(\pi))~=$};
        \node at (0,0) {11}
                child {node {10}
                        child {node {9}
                                child {node {8}
					child{ node {7}}
					child {edge from parent[draw=none]}
					}
                                child{ node {6}
					child{ node {4}}
					child{ node {1}}
					}
                                }
                        child {node {5}
                                child {edge from parent[draw=none]}
                                child{ node {3}
					child {edge from parent[draw=none]}
					child{ node {2}}
					}
                                }
                }
		child {edge from parent[draw=none]};
        \end{tikzpicture}
        }
\title{Revstack sort, zigzag patterns, descent polynomials of $t$-revstack sortable permutations, and Steingr\'imsson's sorting conjecture}
\author{Mark Dukes}
\address{Department of Computer and Information Sciences, University of Strathclyde, Glasgow  G1 1XH, UK}
\email{mark.dukes@ccc.oxon.org/mark.dukes@strath.ac.uk}
\begin{document}
\begin{abstract}
In this paper we examine the sorting operator $\T(LnR)=\T(R)\T(L)n$.
Applying this operator to a permutation is equivalent to passing the permutation reversed through a stack.
We prove theorems that characterise $t$-revstack sortability in terms of patterns in a permutation that we call {\it{zigzag}} patterns.
Using these theorems we characterise those permutations of length $n$ 
which are sorted by $t$ applications of $\T$ for $t=0,1,2,n-3,n-2,n-1$.
We derive expressions for the descent polynomials of these six classes of permutations and use
this information to prove Steingr\'imsson's sorting conjecture for those six values of $t$.
Symmetry and unimodality of the descent polynomials for general $t$-revstack sortable permutations is also proven and three conjectures are given.
\end{abstract}
\maketitle

\section{Introduction}
Let $\sn$ be the group of all permutations of $\{1,\ldots ,n\}$.
The classical stack sort operator $\S$
on permutations~\cite{knuthvol1} 
may be defined in two equivalent ways.
One way is to define it as an operator on words representing permutations:
$\S(\epsilon)=\epsilon$ where $\epsilon$ is the empty permutation, $\S(x)=x$ where $x$ is a word of length 1, and
$$\S(LnR) ~=~ \S(L)\S(R)n$$
where $n$ is the unique largest entry in the word $LnR$.
For example, $$\S(42513) = \S(42)\S(13)5 = \S(2)4 \S(1)35 = 24135.$$
Another way to define $\S$ is as the output of pushing $\pi=\pi_1\cdots \pi_n\in\sn$ (from left to right) through a stack.
The stack has the rule that an element may be above another element in the stack only if it is less than it.
If the element at the top of the stack is not less than the next (leftmost) element of $\pi$ which is to next enter the stack,
then elements of the stack must be popped from the stack (to the left) until this is the case.
This is illustrated in Figure~\ref{fig1} for the permutation $\pi=42513$.

\def\mlen{2}
\def\mwidth{0.65}
\def\sscale{1}
\def\mxskip{5.4}
\newcommand{\stackdiagram}[6]{
	\draw (#5 + 0,#6 + 0)--(#5+ \mlen,#6+ 0) -- (#5+\mlen,#6-\sscale*\mlen) -- (#5+\mlen+\mwidth,#6-\sscale*\mlen) -- (#5+\mlen+\mwidth,#6+0);
	\draw (#5+\mlen+\mwidth,#6+0) -- (#5+2*\mlen+\mwidth,#6+0);
	\node at (#5+\mlen+\mwidth,#6+0) (A) {};
	\node at (#5+\mlen,#6+0) (B) {};
	\node at (#5+\mlen+0.5*\mwidth,#6-0.8*\sscale*\mlen) (C) {#3};
	\node at (#5+\mlen+0.5*\mwidth,#6-0.4*\sscale*\mlen) (D) {#2};
	\node[anchor=south west,inner sep=8pt] at (A.south west) {#4};
	\node[anchor=south east ,inner sep=8pt] at (B.south east) {#1};
	}
\def\epsi{0.2}
\def\aheigh{-0.8}
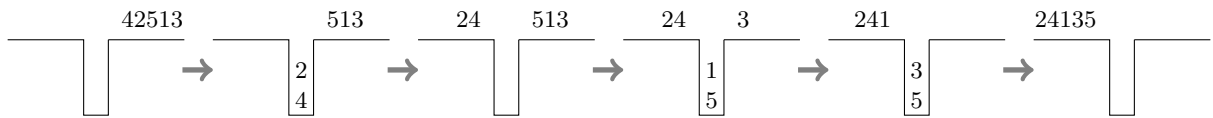
\begin{figure}[!h] 
\begin{tikzpicture}[scale=0.5]
\footnotesize
\stackdiagram{}{}{}{42513}{0}{0}
\stackdiagram{}{2}{4}{513}{1*\mxskip}{0}
\stackdiagram{24}{}{}{513}{2*\mxskip}{0}
\stackdiagram{24}{1}{5}{3}{3*\mxskip}{0}
\stackdiagram{241}{3}{5}{}{4*\mxskip}{0}
\stackdiagram{24135}{}{}{}{5*\mxskip}{0}
\foreach \p in {1,2,3,4,5}
	\draw [gray,->,line width=2pt] (\p*\mxskip - 4*\epsi,\aheigh) -- (\p*\mxskip ,\aheigh);
\end{tikzpicture}
\caption{Moving the permutation $\pi = 42513$ through the stack once to get $\S(\pi)=24135$. 
First, 4 is pushed onto the stack, followed by 2. Since 5 is larger than both 2 and 4, both are popped from the stack before 5 can be pushed onto it.
1 is then pushed onto the stack. As 3 is not less than 1, but is less than 5, 1 is popped from the stack, 3 is pushed onto the stack, and then all stack elements are popped from the stack.
\label{fig1}}
\end{figure}
It is clear that repeatedly applying $\S$ to a permutation will yield the identity permutation, and the operator $\S$ is therefore a sorting operator.
The set of permutations which are sorted by at most one application of $\S$ is known to be the set of 231-avoiding permutations, 
and this result is often described as the first result in the combinatorics of permutation patterns.
We will assume the reader is familiar with the basic terminology of permutation patterns~\cite{bona.book}.

Variants of this operation and their analysis have also thrown up interesting connections to other areas of discrete mathematics. 
Recently, the author in collaboration with others showed how the sorting operation $B(LnR)=B(L)Rn$ describes one pass of the bubble-sort operator. 
Further to this, it was shown that permutations in $\sn$ which require at most $t$ passes of $B$ are in bijection with the 
set of ground state juggling sequences of period $n$ with $t$ balls~\cite{chung}.

In this paper, we will consider another variant of the stack sort operator. We will call this variant {\it{\revstack\ sort}} and 
denote it by $\T = \S \circ \rev$, where $\rev$ is the operation that reverses the list that it acts upon.
We may write a recursion for $\T$ as we did for $\S$ above:
Given $\pi = L n R \in \sn$, $\T$ satisfies 
$$\T(LnR) ~=~ \T(R)\T(L)n,$$
$\T(\epsilon)=\epsilon$, and $\T(x)=x$ where $x$ is a word of length 1.
For example, $\T(42513)=\T(13)\T(42)5 = \T(1)3 \T(2) 4 5 = 13245$.
An equivalent way to describe this operation is that it is what results when a permutation is fed backwards into a stack.
For $\pi = 42513$, this is illustrated in Figure~\ref{fig2}.

This new sorting operation is interesting because it appears to be faster than stack sort in the following sense:
\begin{conjecture}[Steingr\'imsson's sorting conjecture~\cite{einar}] \label{econj}
For all $0 \leq t \leq n$,
$$|\{ \pi \in \sn ~:~ \S^t(\pi) = \mathrm{id} \}| ~\leq ~ \{ \pi \in \sn ~:~ \T^t(\pi) = \mathrm{id} \}|,$$
where $\mathrm{id}$ is the identity permutation. Furthermore, this inequality is strict for all pairs $(n,t)$ that satisfy $2<t<n-1$.
\end{conjecture}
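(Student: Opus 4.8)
The statement to be addressed is Steingr\'imsson's sorting conjecture in its full generality: for all $0\le t\le n$, the number of $t$-revstack sortable permutations is at least the number of $t$-stack sortable permutations, with strict inequality whenever $2<t<n-1$. I should be honest that this is stated as a \emph{conjecture} in the excerpt, so a complete proof is not expected; my plan is to describe the strategy that would settle it, while flagging where the genuine difficulty lies.

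The plan is to establish a single injection $\Phi$ from the set $\{\pi\in\sn: \S^t(\pi)=\id\}$ into $\{\pi\in\sn:\T^t(\pi)=\id\}$ that works uniformly for every $t$, rather than comparing cardinalities class by class. The natural candidate is to route both operators through the tree/recursion structure already set up in the paper. Since $\S(LnR)=\S(L)\S(R)n$ and $\T(LnR)=\T(R)\T(L)n$ differ only by the order in which the two subtrees are processed, the number of applications needed to sort a permutation should be governed by an invariant of the associated binary tree $\Tree(\pi)$ (the decreasing binary tree, whose use is signalled by \diagramOne\ and \diagramTwo). First I would prove that $t$-stack-sortability and $t$-revstack-sortability are each characterised by a weighted height/depth statistic on $\Tree(\pi)$ — the zigzag-pattern characterisation referenced in the abstract is exactly such a statement for $\T$ — and then show that the tree statistic controlling $\T$ is dominated, node by node, by the one controlling $\S$. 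Concretely I would analyse the local rewriting maps $h'$ on trees (as in \diagramFour) that model one pass of each operator, show each pass of $\T$ removes at least as much ``disorder'' as the corresponding pass of $\S$, and conclude that $\S^t(\pi)=\id$ forces $\T^t(\pi)=\id$. That containment $\{\S^t=\id\}\subseteq\{\T^t=\id\}$ immediately yields the non-strict inequality for all $t$.

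For strictness in the range $2<t<n-1$ I would argue that the inclusion above is proper by exhibiting, for each such pair $(n,t)$, a witness permutation that is $t$-revstack sortable but not $t$-stack sortable. The cleanest route is to work again with the tree model: I would construct a family of permutations whose tree has a ``zigzag'' branch of controlled length that costs one extra pass under $\S$ but not under $\T$, because the reversal in $\T(LnR)=\T(R)\T(L)n$ collapses alternating left-right descents faster. A useful reduction is to compare the generating functions or the descent polynomials of the two classes: the paper already computes descent polynomials for $t\in\{0,1,2,n-3,n-2,n-1\}$, and for the interior range a strict gap in total size can be extracted by showing the $t$-revstack descent polynomial has strictly larger sum of coefficients than its $t$-stack counterpart for at least one descent value. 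I would verify the endpoints of the range, $t=3$ and $t=n-2$, directly and then propagate strictness across the interior by a monotonicity-in-$t$ argument, using that passing from $t$ to $t+1$ can only add permutations to each sortable class and does so strictly faster on the revstack side.

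The main obstacle — and the reason this remains a conjecture — is precisely the \emph{uniform} domination claim at the heart of the first paragraph: while the zigzag characterisation handles $\T$ cleanly, there is no equally clean tree invariant known for $t$-stack-sortability for general $t$ (the $t$-stack-sortable permutations are notoriously resistant, being enumerated by simple formulas only for $t\le 2$ and $t=n-1,n-2$). Thus the hard part will be proving that one pass of $\T$ never performs worse than one pass of $\S$ at the level of a single tree node across \emph{all} intermediate trees, not merely at the boundary values; establishing a suitable potential function on trees that is non-increasing under both operators and strictly decreasing faster under $\T$ is where the real work lies. Absent that, the argument rigorously delivers the full inequality and strictness only at the six boundary values computed in the paper, and reduces the general conjecture to this single tree-domination lemma.
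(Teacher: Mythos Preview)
Your proposal rests on a claim that is simply false: the containment $\{\pi:\S^t(\pi)=\id\}\subseteq\{\pi:\T^t(\pi)=\id\}$ does not hold, already for $t=1$. Take $\pi=132$. Then $\S(132)=\S(1)\S(2)3=123=\id$, so $132$ is $1$-stack sortable; but $\T(132)=\T(2)\T(1)3=213\neq\id$, so $132$ is \emph{not} $1$-revstack sortable. Indeed for $t=1$ the two sets are $\sn(231)$ and $\sn(132)$, which have the same cardinality but are distinct sets with neither containing the other. So there can be no injection $\Phi$ that is the identity map, and more generally the ``domination'' statement that one pass of $\T$ removes at least as much disorder as one pass of $\S$ on the \emph{same} permutation is wrong. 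Any genuine proof of the conjecture must compare cardinalities by some non-trivial bijection or counting argument, not by pointwise comparison of the two operators.

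Apart from this, the strategy you outline does not match what the paper actually does. The paper does not attempt the conjecture in general; it treats it as open and proves only the six boundary cases $t\in\{0,1,2,n-3,n-2,n-1\}$. For $t=0,1,2$ it shows equality of the descent polynomials (hence of the set sizes), using Simion's result on $\sn(231)$ and Bouvel--Guibert's equidistribution result; for $t=n-2$ and $t=n-3$ it computes $|\Tack{n}{t}|$ explicitly via the zigzag classification and compares against West's exact formulas for $|\Stack{n}{t}|$, reducing each case to an elementary binomial or polynomial inequality verified by induction on $n$. There is no tree-domination lemma, no potential function, and no uniform injection. Your last paragraph correctly identifies that the general case remains open, but the specific mechanism you propose to reach even the boundary cases is not salvageable.
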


In Section 2 we will define a $t$-zigzag of a permutation. We present theorems which tell us when permutations are and are not $t$-revstack sortable based on whether or not the permutation contains (certain types of) zigzags of certain degrees.
In Section 3 we use the theorems of Section 2 to classify permutations of length $n$ that are 1-revstack, 2-revstack, $(n-3)$-revstack and $(n-2)$-revstack sortable.
These classifications are then used to derive the descent polynomials which are the generating functions of the descent statistic on classes of $t$-\revstack\ sortable permutations for $t=0,1,2,n-3,n-2$ and $n-1$.

These generating functions are then used in Section 4 to prove Steingr\'imsson's conjecture those six particular values of $t$.
In Section 5 we prove symmetry and unimodality of the descent polynomials for the class of $t$-\revstack\ sortable permutations,
and conjecture log-concavity and real-rooted-ness of coefficients of our polynomials based on the evidence given in the Appendix.

\begin{figure}
\begin{tikzpicture}[scale=0.5]
\footnotesize
\stackdiagram{}{}{}{42513}{0}{0}
\stackdiagram{}{1}{3}{425}{1*\mxskip}{0}
\stackdiagram{13}{}{5}{42}{2*\mxskip}{0}
\stackdiagram{13}{2}{5}{4}{3*\mxskip}{0}
\stackdiagram{132}{4}{5}{}{4*\mxskip}{0}
\stackdiagram{13245}{}{}{}{5*\mxskip}{0}
\foreach \p in {1,2,3,4,5}
	\draw [gray,->,line width=2pt] (\p*\mxskip - 4*\epsi,\aheigh) -- (\p*\mxskip ,\aheigh);
\end{tikzpicture}
\caption{Reversing the permutation $\pi= 42513$ and then moving it through a stack is equivalent to passing the rightmost elements of the permutation to the stack.
From this we get $\T(42513)=13245$.
\label{fig2}}
\end{figure}
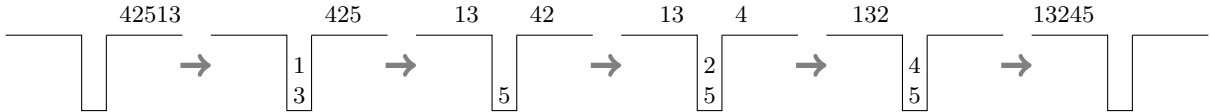

Let $\Tack{n}{t} = \{ \pi \in \sn ~:~ \T^t(\pi)=\mbox{\id}\}$, the set of $t$-\revstack\ sortable permutations.
Given $\pi \in \sn$, let $\deg_{\T}(\pi) = \inf\{ i ~:~ \T^i(\pi)=\id \mbox{ and } \T^{i-1}(\pi) \neq \id\}$, the number
of applications of $\T$ that are required to sort $\pi$.

\section{Classification theorems and zigzag patterns}
In this section we will present classification theorems which assist us in deciding whether or not a permutation is $t$-revstack sortable. 
These classifications are in terms of {\it{zigzags}} which are subsequences of values in the permutation.
These will be used to classify particular classes of $t$-\revstack\ sortable permutations.

\begin{lemma}\label{LEMMA:1}  
If $\pi \in \sn$, $1\leq a < b \leq n$, and $b$ precedes $a$ in $\pi$, 
then $a$ precedes $b$ in $\T(\pi)$ 
\end{lemma}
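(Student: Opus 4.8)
The plan is to prove the statement by induction on $n$ using the recursive description $\T(LnR)=\T(R)\T(L)n$, in which $n$ denotes the largest entry of the word. Because this recursion depends only on the relative order of the entries it acts upon, it is convenient to prove the slightly more general claim for every word $w$ with distinct positive-integer entries: if $a<b$ and $b$ precedes $a$ in $w$, then $a$ precedes $b$ in $\T(w)$. This generalization is what lets me apply the induction hypothesis directly to the subwords $L$ and $R$, each of which is strictly shorter than $LnR$.

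For the base cases, words of length $0$ or $1$ contain no pair $a<b$ at all, so the claim holds vacuously. For the inductive step I would write $\pi=LnR$ with $n$ the largest entry, so that $\T(\pi)=\T(R)\T(L)n$, and fix a pair $a<b$ with $b$ preceding $a$ in $\pi$. The immediate case is $b=n$: here $n$ is the final letter of $\T(\pi)$ while $a\neq n$ appears somewhere in $\T(R)\T(L)$, so $a$ precedes $b$. Otherwise $a,b<n$, and since neither equals the pivot $n$, each lies in $L$ or in $R$.

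The remaining argument is a short case analysis on which blocks contain $a$ and $b$. First I would rule out the configuration $a\in L$, $b\in R$, since that would force $a$ to precede $b$ already in $\pi$, contradicting the hypothesis. If instead $b\in L$ and $a\in R$, then because $\T(R)$ precedes $\T(L)$ in $\T(\pi)$, the element $a$ automatically precedes $b$; this is exactly the block reversal built into the recursion. Finally, if $a$ and $b$ lie in the same block, then $b$ precedes $a$ within that block, and the induction hypothesis applied to $L$ (respectively $R$) gives that $a$ precedes $b$ in $\T(L)$ (respectively $\T(R)$); since $\T$ transplants each block intact into $\T(\pi)$, the relative order of $a$ and $b$ is preserved, and again $a$ precedes $b$.

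I do not expect a serious obstacle: the content is entirely in observing that the block swap $L,R\mapsto R,L$ in the recursion reverses the order of every pair that straddles the pivot, while pairs inside a single block are handled recursively. The one point requiring care is the passage from permutations to arbitrary words with distinct entries (equivalently, an order-isomorphism argument), which is precisely what makes the induction on the subwords $L$ and $R$ legitimate.
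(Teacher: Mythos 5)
Your proof is correct, but it takes a genuinely different route from the paper's. The paper argues operationally, via the stack description of $\T$: since $b$ precedes $a$ in $\pi$, element $a$ enters the stack first, and because $a<b$ it must be popped before $b$ can enter, so $a$ precedes $b$ in the output --- two sentences, done. You instead argue by structural induction on the recursive definition $\T(LnR)=\T(R)\T(L)n$, with a case analysis on which of the blocks $L$, $R$ contains $a$ and $b$: the straddling case $a\in L$, $b\in R$ is impossible under the hypothesis, the case $b\in L$, $a\in R$ is handled by the block swap built into the recursion, and same-block pairs go to the induction hypothesis, with the pivot case $b=n$ immediate. All cases are covered and the step you flag --- extending the claim to arbitrary words with distinct entries so that the induction hypothesis applies to the subwords $L$ and $R$ --- is exactly the right point of care, and it is legitimate since the recursion depends only on relative order (indeed the paper itself states the recursion for words). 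The trade-off: the paper's stack argument is shorter and transparently ``physical,'' but it presupposes the machine model; your induction is self-contained given only the recursion, needs no appeal to the equivalence of the two definitions of $\T$, and the generalization to words is a reusable device (the paper implicitly relies on the same word-level recursion in later computations such as $\T(L_1nL_2(n-1)R_1R_2)=\T(R_1R_2)\T(L_2)(n-1)\T(L_1)n$).
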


\begin{proof}
Since $b$ precedes $a$ in $\pi$, $a$ enters the stack before $b$. Since $a<b$, element $a$ must have been popped from the stack before element $b$ enters. 
Thus $a$ precedes $b$ in $\T(\pi)$.
\end{proof}

\begin{lemma}\label{LEMMA:2}  
Suppose that $\pi \in \sn$, $1\leq a<b \leq n$, and $a$ precedes $b$ in $\pi$.
Then $b$ precedes $a$ in $\T(\pi)$ if there exists $c>b$ such that 
$a$ precedes $c$ and $c$ precedes $b$ in $\pi$.
If no such $c$ exists then $a$ precedes $b$ in $\T(\pi)$.
\end{lemma}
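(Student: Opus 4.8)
The plan is to reason directly about the stack dynamics, exactly as in the proof of Lemma~\ref{LEMMA:1}, but tracking when two elements $a<b$ with $a$ preceding $b$ in $\pi$ get reversed. Recall that $\T$ is realized by feeding $\pi$ into the stack from the \emph{right}, so the rightmost element of $\pi$ enters first; equivalently, an element is pushed, and whenever the incoming element is larger than the current top, the stack is popped (to the output) until the top exceeds the incoming element or the stack empties. Since $a$ precedes $b$ in $\pi$, the element $b$ enters the stack \emph{before} $a$ does. The question of whether $b$ precedes $a$ in $\T(\pi)$ is therefore exactly the question of whether $b$ has already been popped off the stack by the time $a$ enters.

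First I would establish the ``if'' direction. Suppose such a $c>b$ exists with $a$ before $c$ before $b$ in $\pi$; then in the right-to-left feeding order, $b$ enters first, then $c$, then $a$. When $c$ is about to be pushed, the current top of the stack is some element that was read after $c$ but before $a$; in any case $b$ (or something blocking $b$) sits below. Because $c>b$ and $c$ is larger than $b$, the arrival of $c$ forces every element $\le c$ currently on the stack—in particular $b$, which lies below $c$'s insertion point—to be popped before $c$ is pushed. I would argue carefully that $b$ cannot still be on the stack once $c$ has been processed: any element sitting above $b$ when $c$ arrives is less than $b<c$, so it too is popped, and then $b$ itself is popped since $b<c$. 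Thus $b$ is output before $a$ ever enters, so $b$ precedes $a$ in $\T(\pi)$.

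Next I would handle the ``only if''/complementary statement: if no such $c$ exists, then $a$ precedes $b$ in $\T(\pi)$. Here every element read strictly between $b$ and $a$ in the feeding order (i.e.\ every element lying between $a$ and $b$ in $\pi$) is less than $b$, and moreover nothing larger than $b$ ever arrives between them. Consequently, from the moment $b$ is pushed until $a$ arrives, the top of the stack is always $\le b$ whenever $a$ is compared against it, but more to the point $b$ is never dislodged: each intervening element is smaller than $b$, so pushing it never pops $b$, and $b$ remains on the stack sitting below all these smaller elements. When $a$ finally arrives, $a<b$, so $a$ is simply pushed on top of $b$ (possibly after popping some elements smaller than $a$), and since $a$ is now above $b$ in the stack, $a$ is popped and output before $b$. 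Hence $a$ precedes $b$ in $\T(\pi)$, completing the dichotomy.

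The main obstacle, and the step I would write most carefully, is controlling the stack contents in the interval between the entry of $b$ and the entry of $a$: I must verify that the \emph{existence} of a large $c$ between them is both necessary and sufficient to evict $b$, ruling out the possibility that some element larger than $b$ but not lying between $a$ and $b$ in $\pi$ could interfere. The clean way to do this is to observe that once $b$ is pushed, the only events that can pop $b$ are the arrivals of elements exceeding $b$, and by the stack's LIFO discipline the \emph{first} such arrival after $b$—which, if it occurs before $a$, is precisely a witness $c$ between $a$ and $b$—is exactly what determines the outcome. Framing the argument around ``the first element larger than $b$ to arrive after $b$'' keeps the case analysis short and makes both directions fall out simultaneously.
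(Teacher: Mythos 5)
Your proof is correct and takes essentially the same route as the paper's: both argue directly from the stack dynamics of right-to-left feeding, noting that $b$ enters before $a$ and is evicted before $a$ enters exactly when a witness $c>b$ arrives between them, while otherwise every intervening element is smaller than $b$, so $b$ survives until $a$ is pushed above it and popped first. Your only blemish is the garbled aside claiming the top of the stack when $c$ arrives ``was read after $c$ but before $a$'' (elements on the stack were read \emph{before} $c$ in feeding order), but this is harmless since none of your core claims rely on it.
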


\begin{proof}
If there is an element $c>b$ such that 
$a$ precedes $c$ and $c$ precedes $b$ in $\pi$, 
then $b$ will be the first of the three elements to enter 
the stack. Since $c>b$, element $b$ must be popped from the stack before 
$c$ enters the stack. It is only after $c$ enters the stack that $a$ may enter the stack and be popped. 
Thus $b$ precedes $a$ in $\T(\pi)$.

If no such $c$ exists then all elements that appear between $a$ and $b$ in $\pi$ are less than $b$. 
Element $b$ will enter the stack and remain in the stack while $a$ enters. 
Element $a$ must be popped before element $b$, therefore $a$ precedes $b$ in $\T(\pi)$.
\end{proof}

\begin{lemma}\label{LEMMA:3} 
$(b,a)$ is an inversion in $\T(\pi)$ iff there exists $c$ such that 
$(a,c,b)$ is a $132$ pattern in $\pi$.
\end{lemma}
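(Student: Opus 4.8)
The plan is to obtain Lemma~\ref{LEMMA:3} as an immediate consequence of Lemmas~\ref{LEMMA:1} and~\ref{LEMMA:2}, which together completely determine, for any two values $a<b$, their relative order in $\T(\pi)$ from their relative order in $\pi$. First I would unwind the two definitions in play: writing $(b,a)$ for an inversion of $\T(\pi)$ means $b>a$ and $b$ precedes $a$ in $\T(\pi)$; and $(a,c,b)$ being a $132$ pattern in $\pi$ means $a<b<c$ with $a$ preceding $c$ and $c$ preceding $b$ in $\pi$. With these spelled out, each direction of the biconditional matches the hypotheses of one of the earlier lemmas exactly.

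For the implication from the pattern to the inversion, suppose such a $c$ exists. Then $a<b<c$, and in $\pi$ the value $a$ precedes $c$ and $c$ precedes $b$. This is precisely the hypothesis of the first part of Lemma~\ref{LEMMA:2} (with the same $c$), so $b$ precedes $a$ in $\T(\pi)$; since $b>a$, the pair $(b,a)$ is an inversion of $\T(\pi)$.

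For the converse I would argue by first pinning down the relative order of $a$ and $b$ in $\pi$. Suppose $(b,a)$ is an inversion of $\T(\pi)$, so $b>a$ and $b$ precedes $a$ in $\T(\pi)$. If $b$ were to precede $a$ in $\pi$, then Lemma~\ref{LEMMA:1} (applied to the pair $a<b$) would force $a$ to precede $b$ in $\T(\pi)$, contradicting the inversion. Hence $a$ precedes $b$ in $\pi$, so Lemma~\ref{LEMMA:2} is applicable. By the contrapositive of its second sentence, the fact that $a$ does \emph{not} precede $b$ in $\T(\pi)$ rules out the ``no such $c$ exists'' case; that is, there is some $c>b$ with $a$ preceding $c$ and $c$ preceding $b$ in $\pi$. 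This $c$ supplies the required $132$ pattern $(a,c,b)$.

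Since the argument is a direct bookkeeping over the two earlier lemmas, there is no substantial obstacle; the only point demanding care is the case split in the converse. Lemma~\ref{LEMMA:2} presupposes that $a$ precedes $b$ in $\pi$, so I must invoke Lemma~\ref{LEMMA:1} first to exclude the reversed order before that lemma can be brought to bear. Keeping the inequalities $a<b<c$ aligned with the roles demanded by the $132$ pattern and by the lemmas' hypotheses is essentially the whole content of the proof.
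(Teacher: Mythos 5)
Your proposal is correct and follows essentially the same route as the paper: both directions are obtained purely by bookkeeping over Lemmas~\ref{LEMMA:1} and~\ref{LEMMA:2}, with Lemma~\ref{LEMMA:1} ruling out the case that $b$ precedes $a$ in $\pi$ and Lemma~\ref{LEMMA:2} supplying (or denying) the witness $c$. Your write-up merely makes explicit the case split that the paper's terser proof leaves implicit, which is a harmless improvement rather than a different approach.
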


\begin{proof}
Lemma~\ref{LEMMA:1} tells us that an inversion $(b,a)$ in $\pi$ becomes a non-inversion $(a,b)$ in $\T(\pi)$.
Lemma~\ref{LEMMA:2} tells us that a non-inversion $(a,b)$ in $\pi$ becomes an inversion $(b,a)$ in $\T(\pi)$ 
iff an element $c$ larger than both exists between the two in $\pi$.
Thus if $(b,a)$ is an inversion in $\T(\pi)$ then there exists $c$ such that
$(a,c,b)$ is an occurrence of the pattern $132$ in $\pi$.
If $(a,c,b)$ is a 132 pattern in $\pi$, then applying $\T$ one finds that $b$ precedes $a$ in $\T(\pi)$, thereby forming an inversion $(b,a)$ in $\T(\pi)$.
\end{proof}

\begin{theorem}\label{THEOREM:1}
$\Tack{n}{1} = \sn(132)$.
\end{theorem}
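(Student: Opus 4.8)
The goal is to show $\Tack{n}{1} = \sn(132)$, that is, a permutation is sorted by a single application of $\T$ if and only if it avoids the pattern $132$. The plan is to prove the two inclusions by exploiting the inversion characterisation already established in Lemma~\ref{LEMMA:3}, since being sorted means the output has \emph{no} inversions at all.

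First I would observe that $\T(\pi) = \id$ is equivalent to saying $\T(\pi)$ is the identity, which holds precisely when $\T(\pi)$ contains no inversions. This reformulation is the crucial bridge, because Lemma~\ref{LEMMA:3} gives an exact correspondence between inversions of $\T(\pi)$ and occurrences of $132$ in $\pi$: the pair $(b,a)$ is an inversion in $\T(\pi)$ if and only if there is some $c$ with $(a,c,b)$ forming a $132$ pattern in $\pi$. So the statement to prove collapses to the assertion that $\T(\pi)$ has no inversions if and only if $\pi$ has no $132$ pattern.

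For the forward direction I would argue the contrapositive: if $\pi$ contains a $132$ pattern, witnessed by a subsequence $(a,c,b)$ with $a<b<c$ in the appropriate positions, then Lemma~\ref{LEMMA:3} immediately produces an inversion $(b,a)$ in $\T(\pi)$, so $\T(\pi) \neq \id$ and $\pi \notin \Tack{n}{1}$. For the reverse direction, suppose $\pi$ avoids $132$; then by Lemma~\ref{LEMMA:3} there can be no inversion in $\T(\pi)$, since any inversion would force the existence of a $132$ pattern in $\pi$. A permutation of $\{1,\ldots,n\}$ with no inversions is necessarily the identity, hence $\T(\pi)=\id$ and $\pi \in \Tack{n}{1}$.

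I do not anticipate a genuine obstacle here, as the theorem is essentially a clean restatement of Lemma~\ref{LEMMA:3} once one notes that sortability in one step is the same as the output being inversion-free. The only point requiring a moment of care is the elementary fact that the identity is the unique permutation with no inversions, which justifies translating ``$\T(\pi)$ has no inversions'' into ``$\T(\pi)=\id$''; this is standard and needs no separate argument.
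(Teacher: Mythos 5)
Your proof is correct and is essentially identical to the paper's own argument: both reduce the statement to the observation that $\T(\pi)=\id$ iff $\T(\pi)$ has no inversions, and then invoke Lemma~\ref{LEMMA:3} to identify inversions of $\T(\pi)$ with occurrences of $132$ in $\pi$. Your write-up merely spells out the two directions that the paper's shorter proof leaves implicit.
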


\begin{proof}
A permutation $\pi \in \sn$ is in $\Tack{n}{1}$ iff $\T(\pi)=\id$, which occurs iff $\T(\pi)$ has no inversions.  
Lemma~\ref{LEMMA:3} tells us this happens precisely when $\pi$ is 132 avoiding.
\end{proof}

\begin{theorem}\label{mrbluesky}
(i) If $\T(\sigma)$ contains $(a,c,b)$ as a $132$ pattern, then either $(b,d,c,a)$ is a $2431$ pattern in $\sigma$, 
or $(b,d,a,c)$ is a $2413$ pattern in $\sigma$ and there is no $e>c$ such that $(b,d,a,e,c)$ in $\sigma$.\\
(ii) 
If $\T(\sigma)$ contains a $132$ pattern, then $\sigma$ either contains a $2431$ pattern, or a $241\overline{5}3$ pattern.
\end{theorem}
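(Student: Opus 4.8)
The plan is to read off, from a $132$ pattern $(a,c,b)$ in $\T(\sigma)$ (so that $a<b<c$, occurring in the positional order $a$, then $c$, then $b$), what the preimage must look like in $\sigma$, using only the three transfer lemmas already proved. The three pairs of the pattern give one inversion, $(c,b)$, and two non-inversions, $(a,c)$ and $(a,b)$, in $\T(\sigma)$; I would analyse the relevant ones with the appropriate lemma and piece together the order of the four symbols $a<b<c<d$ in $\sigma$.

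First I would apply Lemma~\ref{LEMMA:3} to the inversion $(c,b)$ in $\T(\sigma)$: it produces an element $d$ with $(b,d,c)$ a $132$ pattern in $\sigma$, i.e. $b<c<d$ with $b$ preceding $d$ preceding $c$ in $\sigma$. This already fixes the relative order of three of the four symbols as $b,d,c$ and identifies $d$ as the largest of the four.

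The main step is to locate $a$ relative to this block, and here I would use Lemma~\ref{LEMMA:2} on the non-inversion $(a,c)$ (recall $a<c$ with $a$ preceding $c$ in $\T(\sigma)$). If $a$ preceded $d$ in $\sigma$, then since $d$ precedes $c$ the element $d>c$ would lie strictly between $a$ and $c$ in $\sigma$, and Lemma~\ref{LEMMA:2} would force $c$ to precede $a$ in $\T(\sigma)$, contradicting $a\prec c$ there. Hence $d$ must precede $a$ in $\sigma$, leaving exactly two possibilities according to the order of $a$ and $c$: either $c$ precedes $a$, giving the order $b,d,c,a$ (a $2431$ pattern), or $a$ precedes $c$, giving $b,d,a,c$ (a $2413$ pattern). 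In the latter case $a\prec c$ holds in $\sigma$ as well as in $\T(\sigma)$, so the contrapositive of Lemma~\ref{LEMMA:2} guarantees that no element exceeding $c$ lies between $a$ and $c$ in $\sigma$; this is exactly the asserted condition that no $e>c$ completes $(b,d,a,e,c)$. This establishes~(i).

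For~(ii) I would simply unwind the barred-pattern convention: a $241\overline{5}3$ occurrence is a $2413$ occurrence $(b,d,a,c)$ that cannot be extended by inserting a new overall maximum (an $e>d$) between the roles of the $1$ and the $3$. The gap condition supplied by~(i) forbids every $e>c$ in that position, and since $d>c$ it in particular forbids every $e>d$; thus the $2413$ pattern produced in~(i) is automatically an occurrence of $241\overline{5}3$, and~(ii) follows at once from~(i). I expect the only delicate point to be the placement argument for $a$ --- applying Lemma~\ref{LEMMA:2} to the correct pair and in the correct orientation to rule out $a\prec d$ --- together with the bookkeeping check that the strengthened gap condition of~(i) (phrased with $e>c$) really is what certifies the barred pattern in~(ii) (which only needs $e>d$).
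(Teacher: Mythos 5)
Your proof is correct and follows essentially the same route as the paper: extract $d$ from the inversion $(c,b)$ via Lemma~\ref{LEMMA:3}, use Lemma~\ref{LEMMA:2} on the non-inversion $(a,c)$ to obtain the $2431$ versus $2413$-with-gap dichotomy, and deduce part (ii) by observing that since $d>c$, the absence of any $e>c$ between $a$ and $c$ in particular rules out any $e>d$, which is exactly the barred-pattern condition. The only difference is organizational: where the paper runs a four-way case check over the dichotomies arising from the pairs $(a,c)$ and $(a,b)$ and discards two combinations as inconsistent, you pin down that $d$ precedes $a$ in $\sigma$ directly by a contradiction argument with Lemma~\ref{LEMMA:2}, which makes any analysis of the pair $(a,b)$ unnecessary --- a mild streamlining of the same method rather than a different one.
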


\begin{proof}
Lemmas~\ref{LEMMA:1}--\ref{LEMMA:3} provide us with the following information about patterns in $\sigma$ and $\T(\sigma)$.
We will say `$(x,y)$ in $\sigma$' to mean $x$ precedes $y$ in the the word representing the permutation $\sigma$,
and `$(x,y,z)$ in $\sigma$' to mean $x$ precedes $y$ and $y$ precedes $z$ in the word representing the permutation $\sigma$, etc
\begin{itemize}
\item If $(b,a)$ in $\sigma$, then $(a,b)$ in $\T(\sigma)$.
\item If $(a,b)$ in $\sigma$, then
	\begin{itemize}
	\item if $\exists$ $c>b$ such that $(a,c,b)$ in $\sigma$ then $(b,a)$ in $\T(\sigma)$,
	\item if $\not\!\exists$ $c>b$ such that $(a,c,b)$ in $\sigma$ then $(a,b)$ in $\T(\sigma)$.
	\end{itemize}
\end{itemize}
Rewriting these observations in terms of how entries appear in $\T(\sigma)$ first:
\begin{itemize}
\item If $(a,b)$ in $\T(\sigma)$ then either,
	\begin{itemize}
	\item $(b,a)$ in $\sigma$, or
	\item $(a,b)$ in $\sigma$ and $\not\!\exists$ $c>b$ such that $(a,c,b)$ in $\sigma$.
	\end{itemize}
\item If $(b,a)$ in $\T(\sigma)$ then $\exists$ $c>b$ such that $(a,c,b)$ in $\sigma$.
\end{itemize}
Using these observations, if $\T(\sigma)$ contains $(a,c,b)$ as a 132 pattern then the following hold:
\begin{enumerate}
\item[1.] Since $(c,b)$ in $\T(\sigma)$, there exists $d>c$ such that $(b,d,c)$ in $\sigma$.
\item[2.] Since $(a,c)$ in $\T(\sigma)$, either 2(i) $(c,a)$ in $\sigma$, or 2(ii) $(a,c)$ in $\sigma$ and $\not\!\exists$ $e>c$ such that $(a,e,c)$ in $\sigma$.
\item[3.] Since $(a,b)$ in $\T(\sigma)$, either 3(i) $(b,a)$ in $\sigma$, or 3(ii) $(a,b)$ in $\sigma$ and $\not\!\exists$ $f>b$ such that $(a,f,b)$ in $\sigma$.
\end{enumerate}
Let us now check all possible combinations of 2(i), 2(ii), 3(i), and 3(ii).
\begin{description}
\item[1, 2(i) and 3(i)] We have $(b,d,c)$ in $\sigma$, $(c,a)$ in $\sigma$, and $(b,a)$ in $\sigma$. These imply that $(b,d,c,a)$ in $\sigma$ where $d>c$.
\item[1, 2(i) and 3(ii)] We have $(b,d,c)$ in $\sigma$, $(c,a)$ in $\sigma$, and $(a,b)$ in $\sigma$ with no $f>b$ such that $(a,f,b)$ in $\sigma$. 
	These are inconsistent as the first two imply that $(b,a)$ in $\sigma$ whereas the third says that $(a,b)$ in $\sigma$. This is impossible,
\item[1, 2(ii) and 3(i)] We have $(b,d,c)$ in $\sigma$, $(b,a)$ in $\sigma$, and $(a,c)$ where $\not\!\exists e>c$ such that $(a,e,c)$ in $\sigma$. 
	This implies that $(b,d,a,c)$ in $\sigma$ and $\not\!\exists e>c$ such that $(b,d,a,e,c)$.
\item[1, 2(ii) and 3(ii)] We have $(b,d,c)$ in $\sigma$, 
$(a,c)$ in $\sigma$ where $\not\!\exists$ $e>c$ such that $(a,e,c)$ in $\sigma$,
and $(a,b)$ in $\sigma$ where $\not\!\exists$ $f>b$ such that $(a,f,b)$ in $\sigma$.
These three are inconsistent and this case is therefore not possible.
\end{description}
We conclude that if $\T(\sigma)$ contains $(a,c,b)$ as a 132 pattern, then either
\begin{itemize}
\item $(b,d,c,a)$ is a 2431 pattern in $\sigma$, or
\item $(b,d,a,c)$ is a 2413 pattern in $\sigma$ and $\not\!\exists$ $e>c$ such that $(b,d,a,e,c)$ in $\sigma$.
\end{itemize}
Notice that in the second condition, if $e$ is such that $c<e<d$, then $(b,d,a,e)$ is a 2413 pattern. Therefore part (i) is true.

At the very end of the previous argument, we are told that there does not exist $e>c$ such that $(b,d,a,e,c)$ in $\sigma$.
Since $d$ is greater than $c$ and less than $e$, this statement implies that there does not exist $e>d$ such that $(b,d,a,e,c)$ in $\sigma$.
This implies part (ii) of the theorem.
\end{proof}

\begin{definition}\label{zigzag:defn}
A $k$-zigzag pattern in a permutation $\pi$ is a sequence $z=(z_0,\ldots,z_{k+1})$ such that
\begin{enumerate}
\item[(a)] $z_0 > z_1>z_2>\cdots > z_{k+1}$, and
\item[(b)] $\pi^{-1}(z_{2i}) < \pi^{-1}(z_0) < \pi^{-1}(z_{2i-1})$ for all $i\geq 1$.
\end{enumerate}
\end{definition}

A 0-zigzag pattern is an occurrence of the permutation pattern 21, and so the only permutation that contains no 0-zigzag pattern is the identity.
A 1-zigzag pattern is an occurrence of the permutation pattern 132.
A 2-zigzag pattern is either an occurrence of the permutation pattern 2413 or an occurrence of the permutation pattern 2431.
A 3-zigzag pattern is illustrated in Figure~\ref{fig:three}.

\begin{figure}
\begin{center}
\includegraphics[scale=0.6]{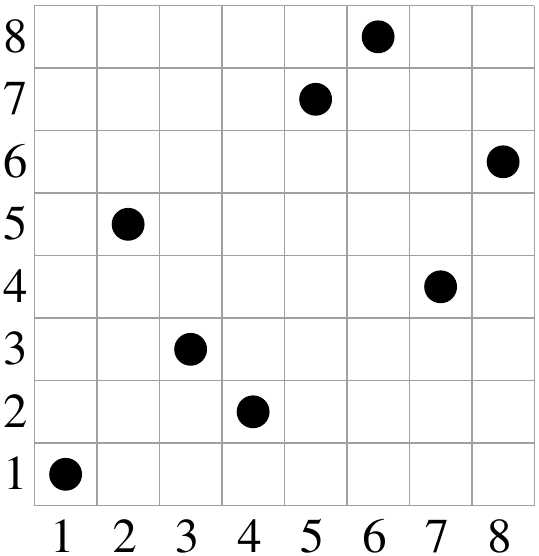}
$\hspace*{2em}$
\includegraphics[scale=0.6]{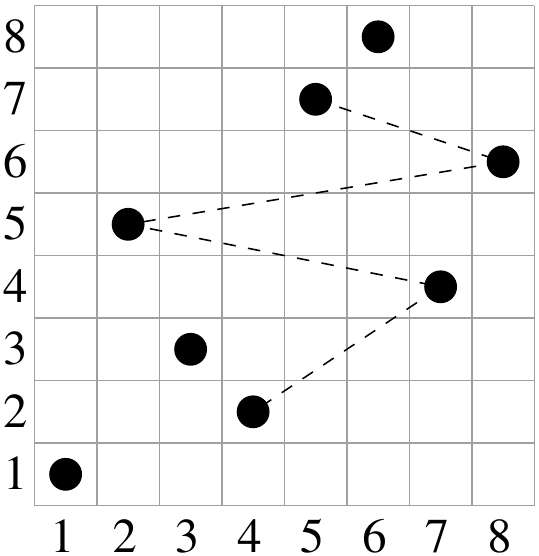}
\end{center}
\caption{The permutation 15327846 contains a 3-zigzag pattern $x=(7,6,5,4,2)$. We have drawn lines between the points in the permutation diagram which correspond to adjacent elements in the 3-zigzag.\label{fig:three}
}
\end{figure}

\begin{theorem}\label{thm:nozigzag}
If $\pi \in \sn$ contains no $k$-zigzag pattern, then $\T^{k}(\pi)=\id$.
\end{theorem}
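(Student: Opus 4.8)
The plan is to prove the contrapositive, namely that $\T^{k}(\pi)\neq\id$ forces $\pi$ to contain a $k$-zigzag, by induction on $k$ after isolating the single structural fact that does all the work.

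\emph{Pullback Claim.} For every $j\geq 1$ and every $\sigma\in\sn$, if $\T(\sigma)$ contains a $(j-1)$-zigzag then $\sigma$ contains a $j$-zigzag. Granting this, the theorem is immediate: the contrapositive of the Claim reads ``$\pi$ has no $k$-zigzag $\Rightarrow \T(\pi)$ has no $(k-1)$-zigzag'', the base case $k=0$ holds because a permutation with no $0$-zigzag avoids $21$ and hence equals $\id$, and applying the induction hypothesis to $\T(\pi)$ gives $\T^{k-1}(\T(\pi))=\T^{k}(\pi)=\id$. So the entire content is the Claim.

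To prove the Claim I would first note that it is already known in low degree: for $j=1$ it is the reverse implication of Lemma~\ref{LEMMA:3}, and for $j=2$ it is precisely Theorem~\ref{mrbluesky}(i), since both $2431$ and $2413$ are $2$-zigzags. For general $j$, write the $(j-1)$-zigzag of $\T(\sigma)$ as $w=(w_0,\dots,w_j)$ with $w_0>\dots>w_j$; by definition $w_0$ is the pivot, each even-indexed $w_{2i}$ precedes $w_0$ and each odd-indexed $w_{2i-1}$ follows $w_0$ in $\T(\sigma)$. I would then translate each relative order back through $\T$ using the reverse rules extracted in the proof of Theorem~\ref{mrbluesky}: an inversion ``$w_0$ before $w_{2i-1}$'' in $\T(\sigma)$ forces, in $\sigma$, $w_{2i-1}$ to precede $w_0$ with some value exceeding $w_0$ lying strictly between them (Lemma~\ref{LEMMA:3}); a non-inversion ``$w_{2i}$ before $w_0$'' forces, in $\sigma$, either $w_0$ before $w_{2i}$, or $w_{2i}$ before $w_0$ with no value exceeding $w_0$ between them (Lemmas~\ref{LEMMA:1}--\ref{LEMMA:2}).

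The target is a $j$-zigzag $\zeta=(d,w_0,w_1,\dots,w_j)$ obtained by prepending a single new pivot $d$ with $d>w_0$; the value chain $d>w_0>\dots>w_j$ is then automatic, and what remains is to place $d$ so that, in $\sigma$, every odd-indexed $w_{2i-1}$ precedes $d$ while $w_0$ and every even-indexed $w_{2i}$ follow $d$. The even half is painless and works for any admissible $d$: if $d>w_0$ precedes $w_0$, then in the ``$w_0$ before $w_{2i}$'' alternative $d$ precedes $w_{2i}$ transitively, and in the other alternative $d$ would otherwise be a value exceeding $w_0$ wedged between $w_{2i}$ and $w_0$, which is forbidden, so $d$ precedes $w_{2i}$ either way.

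The real obstacle is the odd half for $i\geq 2$: the reverse of Lemma~\ref{LEMMA:3} only guarantees that each $w_{2i-1}$ precedes $w_0$ with \emph{some} large value between them, not that it precedes the chosen $d$, and a careless choice of $d$ can in fact sit to the left of some $w_{2i-1}$. I would resolve this by choosing $d$ to be the large value witnessing the inversion of the \emph{rightmost} odd-indexed element: let $w_{2i_0-1}$ be the odd-indexed member of $w$ occurring rightmost in $\sigma$, and take $d>w_0$ with $w_{2i_0-1}$ before $d$ before $w_0$. Since $w_{2i_0-1}$ is rightmost among the odd elements and $d$ lies to its right, every odd $w_{2i-1}$ precedes $d$; and as $d$ still precedes $w_0$ and exceeds it, the even analysis applies verbatim, so $\zeta$ is a genuine $j$-zigzag. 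I expect the remaining bookkeeping, namely checking the ranges in the condition ``for all $i\geq 1$'' and that $d$ is distinct from every $w_m$ (immediate, since $d>w_0\geq w_m$), to be routine, so that the rightmost-odd-element choice of pivot is the one genuine idea in the argument.
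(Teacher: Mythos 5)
Your proposal is correct and takes essentially the same approach as the paper: both prove the pullback claim that a $(k-1)$-zigzag in $\T(\pi)$ lifts to a $k$-zigzag in $\pi$ by prepending a new pivot exceeding $z_0$, supplied by Lemma~\ref{LEMMA:3}, and then induct. The only differences are cosmetic: the paper takes the rightmost witness $c^{*}$ over all the odd-indexed inversions (where you take a witness of the rightmost odd-indexed element, which serves the same purpose), and it checks that the even-indexed elements lie to the right of the new pivot by a contradiction via Lemma~\ref{LEMMA:3} rather than your direct case split using Lemmas~\ref{LEMMA:1} and~\ref{LEMMA:2}.
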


\begin{proof}
If $\pi$ contains no $1$-zigzag pattern, then $\pi$ avoids the pattern $132$, which by Theorem~\ref{THEOREM:1} means $\T(\pi)=\T^1(\pi)=\id$. 
Therefore the result of the theorem is true for $k=1$.

The proof will follow by induction. 
It will be sufficient to show that given $\pi \in \sn$,\\[1em]
\centerline{$\pi$ contains no $k$-zigzag pattern $\Longrightarrow$ $\T(\pi)$ contains no $(k-1)$-zigzag pattern.}
\ \\
We will prove the contrapositive to the above statement:\\[1em]
\centerline{$\T(\pi)$ contains a $(k-1)$-zigzag pattern $\Longrightarrow$ $\pi$ contains a $k$-zigzag pattern.}
\ \\
Suppose that $z=(z_0,\ldots,z_k)$ is a $(k-1)$-zigzag pattern in $\T(\pi)$.
Since $\{(z_0,z_{2i-1})\}_{i\geq 1}$ are inversions in $\T(\pi)$, 
by Lemma~\ref{LEMMA:3} we know there must exist values $c_i$ such that $(z_{2i-1},c_i,z_0)$ are occurrences of 132 patterns in $\pi$.
Let $c^{*}$ be the rightmost such $c_i$ in $\pi$. 
Then $(z_{2i-1}, c^{*},z_0)$ are also all occurrences of 132 patterns in $\pi$ which means
$\pi^{-1}(z_{2i-1}) < \pi^{-1}(c^{*})$ for all $i$.

Next suppose that there is some $z_{2j}$ (where $j\ \geq 1$) such that
$(z_{2j},c^{*})$ is a 12 pattern in $\pi$.
Then $(z_{2j},c^{*},z_0)$ is a 132 pattern in $\pi$ by Definition~\ref{zigzag:defn} which, according to 
Lemma~\ref{LEMMA:3}, means $(z_0,z_{2j})$ is a 21 pattern in $\T(\pi)$.
This is a contradiction since $z_{2j}$ precedes $z_0$ in $\T(\pi)$. 
We conclude that all even indexed $z_i$'s are to the right of $c^{*}$ in $\pi$, hence $\pi^{-1}(c^{*}) < \pi^{-1}(z_{2j})$ for all $j\geq 1$.

The two pairs of inequalities above, together with $c^{*} > z_0 > z_1 > \cdots > z_{k}$, show that 
$z'=(c^{*},z_0,\ldots ,z_{k})$ is a $k$-zigzag pattern in $\pi$.
\end{proof}

\begin{definition}
Let $z=(z_0,\ldots,z_{k+1})$ be a $k$-zigzag pattern in $\pi \in \sn$.
We will call $z$ {\it{interrupted}} if there exists $c>z_0$ such that either 
\begin{enumerate}
\item[(i)] $z_{2a}$ precedes $c$ and $c$ precedes $z_{2b}$ in $\pi$ for some $a,b \geq 1$, or 
\item[(ii)] $z_{2a-1}$ precedes $c$ and $c$ precedes $z_{2b-1}$ in $\pi$ for some $a,b \geq 1$.
\end{enumerate}
Otherwise we will call $z$ {\it{uninterrupted}}.
\end{definition}

\begin{example}
The $3$-zigzag $z=(7,6,5,4,2)$ in the permutation $\pi=(1,5,3,2,7,8,4,6)$ shown in Figure~\ref{fig:three} is uninterrupted.
The $3$-zigzag $z'=(7,6,5,4,2)$ in the permutation $\pi'=(1,5,3,2,7,4,8,6)$ shown is interrupted because there is an $8$ between $4$ and $6$.
The $4$-zigzag $z''=(10,9,8,7,6,5)$ in the permutation
$\pi'' = (4,6,11,8,3,2,10,12,7,1,9,5)$
is interrupted because there is an $11$ between $8$ and $6$ in $\pi''$.
See also Figure~\ref{unint_example}.
\begin{figure}[h]
\centerline{\includegraphics[scale=0.75]{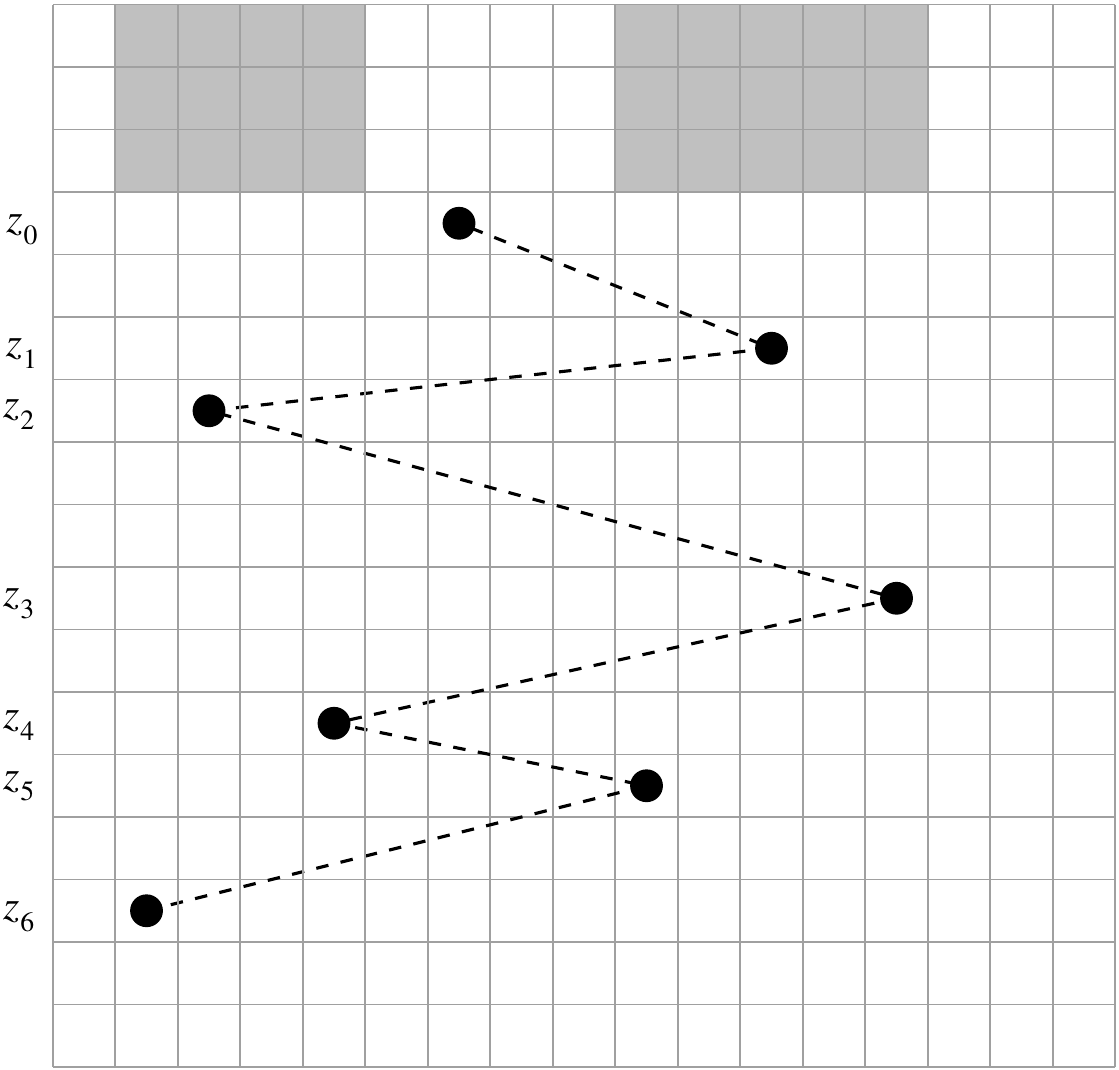}}
\caption{An illustration of where points are forbidden (the grey areas) in order for a $5$-zigzag $z=(z_0,z_1,\ldots,z_6)$ to be an uninterrupted 5-zigzag.
\label{unint_example}
}
\end{figure}
\end{example}

\begin{definition}
Given $\pi \in \sn$ and a set of elements $A \subseteq \{1,\ldots,n\}$, let
$\aleft{\pi}{A}$ (resp. $\aright{\pi}{A}$) be the element of $A$ that is leftmost (resp. rightmost) in $\pi$.
We will say that an element $x$ is {\it{among}} $A$ in $\pi$ if $x$ does not precede $\aleft{\pi}{A}$ in $\pi$ and 
$\aright{\pi}{A}$ does not precede $x$ in $\pi$.
\end{definition}

For example if $\pi=285491376$ and $A=\{1,3,5,9\}$ then $\aleft{\pi}{A}=5$ and $\aright{\pi}{A}=3$.
The elements 4 and 5 are among $A$ in $\pi$ whereas the elements 2 and 7 is not.

\begin{theorem}\label{thm:zigzag}
If $\pi \in \sn$ contains an uninterrupted $k$-zigzag then $\T^k(\pi) \neq \id$.
\end{theorem}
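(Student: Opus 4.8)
The plan is to induct on $k$, running in the opposite direction to the proof of Theorem~\ref{thm:nozigzag}. For the base case I would take $k=1$: a $1$-zigzag is exactly a $132$ pattern (and is automatically uninterrupted, since there is only one odd- and one even-indexed entry), so if $\pi$ contains one then $\pi\notin\sn(132)=\Tack{n}{1}$ by Theorem~\ref{THEOREM:1}, giving $\T^1(\pi)\neq\id$. The inductive step reduces to the single implication
\[
\pi \text{ has an uninterrupted } k\text{-zigzag}\ \Longrightarrow\ \T(\pi)\text{ has an uninterrupted } (k-1)\text{-zigzag},
\]
after which the induction hypothesis applied to $\T(\pi)$ yields $\T^{k-1}(\T(\pi))=\T^{k}(\pi)\neq\id$.

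To prove this implication, let $z=(z_0,\ldots,z_{k+1})$ be an uninterrupted $k$-zigzag in $\pi$, and I would take the reduced sequence $z'=(z_1,z_2,\ldots,z_{k+1})$ obtained by deleting the top element $z_0$ as the candidate $(k-1)$-zigzag in $\T(\pi)$; part~(a) of Definition~\ref{zigzag:defn} is immediate. For part~(b) I first note, using Lemma~\ref{LEMMA:1} together with $z_0$ as the witness in Lemma~\ref{LEMMA:2}, that in $\T(\pi)$ every odd-indexed $z_{2i-1}$ precedes every even-indexed $z_{2j}$ (including $z_0$). The two required families of inequalities then come from opposite sources. The relations ``$z_1$ precedes $z_{2i}$ in $\T(\pi)$'' follow from Lemma~\ref{LEMMA:3}, because $(z_{2i},z_0,z_1)$ is a $132$ pattern in $\pi$ with $z_0$ playing the role of the larger middle element. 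The relations ``$z_{2i+1}$ precedes $z_1$ in $\T(\pi)$'' follow from Lemma~\ref{LEMMA:1} when $z_1$ precedes $z_{2i+1}$ in $\pi$, and otherwise from Lemma~\ref{LEMMA:2} together with the uninterrupted hypothesis, which forbids any element larger than the two odd-indexed entries from lying between them. This is where uninterruptedness first enters, and it is also what pins down $z_1$, rather than some larger element of the odd block, as the correct new pivot.

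The heart of the argument, and the step I expect to be hardest, is showing that $z'$ is itself uninterrupted in $\T(\pi)$. I would argue by contradiction. A hypothetical interrupting element $C$ for $z'$ lies in $\T(\pi)$ strictly between two equal-parity entries of $z'$; since $C$ exceeds the entries it separates, the statement that it precedes the right-hand one is an inversion of $\T(\pi)$, so Lemma~\ref{LEMMA:3} produces an element $c'$ of $\pi$ completing a $132$ pattern with them, while the non-inversion on the left forces $c'$ to lie to the left of the left-hand entry in $\pi$. Together these place $c'$ strictly between two equal-parity entries of the original zigzag $z$, with $c'$ large, which is precisely an interruption of $z$, contradicting the hypothesis. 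The delicate point is the asymmetry between the two parities: for the block coming from the odd-indexed entries the pull-back is controlled directly by positional information, since the interval spanned by those entries contains nothing exceeding the pivot, whereas for the block coming from the even-indexed entries one genuinely needs the uninterruptedness of $z$ to kill an intermediate-valued element that would otherwise survive one application of $\T$ and re-emerge as an interruption. Making the value thresholds in these two pull-backs line up exactly with the definition of an interruption is the main obstacle, and is where the precise shape of the forbidden regions must be used with care.
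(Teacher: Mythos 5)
Your overall architecture (induct on $k$; base case via Theorem~\ref{THEOREM:1}; one-step claim that an uninterrupted $k$-zigzag in $\pi$ yields an uninterrupted $(k-1)$-zigzag in $\T(\pi)$, realized by dropping $z_0$) is exactly the paper's, but your one-step claim is false for an \emph{arbitrary} uninterrupted zigzag, and the failure is precisely at the point you flag as delicate. Uninterruptedness only forbids elements larger than $z_0$ between equal-parity entries; it says nothing about elements with values strictly between $z_1$ and $z_0$. Your assertion that Lemma~\ref{LEMMA:2} plus uninterruptedness ``forbids any element larger than the two odd-indexed entries from lying between them'' is therefore wrong: an element $c$ with $z_1<c<z_0$ may sit between two odd-indexed entries, and then Lemma~\ref{LEMMA:2} (with witness $c$) places $z_1$ \emph{before} the smaller odd entry in $\T(\pi)$, destroying condition (b) for your candidate $(z_1,\ldots,z_{k+1})$. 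Concretely: $\pi=25143$ contains the uninterrupted $2$-zigzag $z=(5,3,2,1)$ (nothing exceeds $5$, so uninterruptedness is vacuous), yet $\T(\pi)=31425$ and $(3,2,1)$ is not a $1$-zigzag there, because the element $4$, with $3<4<5$, lies between $1$ and $3$ in $\pi$ and forces $3$ to precede $1$ in $\T(\pi)$. The same threshold mismatch sinks your contradiction argument for uninterruptedness of $z'$: an interrupter of $z'$ need only exceed the \emph{new} pivot $z_1$, whereas an interrupter of $z$ must exceed $z_0$; the element $c'$ produced by Lemma~\ref{LEMMA:3} exceeds only the interrupter, so it need not exceed $z_0$ and no contradiction with the uninterruptedness of $z$ results. (Your odd-before-even bookkeeping via Lemmas~\ref{LEMMA:1}--\ref{LEMMA:3} with $z_0$ as witness is fine; that part matches the paper.)

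The missing idea is a maximality device that strengthens the inductive invariant: the paper runs the step not on an arbitrary uninterrupted $k$-zigzag but on the \emph{lexicographically largest} uninterrupted $k$-zigzag beginning with $z_0$. Maximality guarantees that $z_1$ is the largest element below $z_0$ in the span of the odd-indexed entries (any $c$ with $z_1<c<z_0$ positioned there could replace $z_1$, producing a lexicographically larger uninterrupted zigzag starting with $z_0$), and likewise that no element exceeding $z_1$ lies among the even-indexed block; with these extra facts both the zigzag property and uninterruptedness of $(z_1,\ldots,z_{k+1})$ in $\T(\pi)$ go through. In the example above, the induction must be run on $(5,4,2,1)$ rather than $(5,3,2,1)$, and indeed $(4,2,1)$ is an uninterrupted $1$-zigzag in $31425$. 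Without some such maximal (or otherwise canonical) choice of the zigzag, the truncation step, as you wrote it, fails.
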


\begin{proof}
An uninterrupted 1-zigzag in a permutation $\pi \in \sn$ is an occurrence of the pattern 132.
If $\pi \in \sn$ contains an occurrence of the pattern 132 then $\T(\pi) \neq \id$ by Theorem~\ref{THEOREM:1}.
The theorem is true for $k=1$.

Now we claim that if $\pi$ contains an uninterrupted $k$-zigzag, then $\T(\pi)$ contains an uninterrupted $(k-1)$-zigzag.
To see the validity of this claim consider the following.
Let $z=(z_0,z_1,\ldots,z_{k+1})$ be the lexicographically largest $k$-zigzag in $\pi$ that begins with $z_0$ that is uninterrupted.
Let $z_{\odd}=\{z_{2j-1}:j\geq 1\}$ and $z_{\even} = \{z_{2j}: j\geq 1\}$.

All elements of $z_{\odd}$ enter and leave the stack before $z_0$ does.
There are no elements larger than $z_0$ among $z_{\odd}$ in $\T(\pi)$ as $z$ is uninterrupted (the elements among $z_{\odd}$ in $\T(\pi)$ start by some element among $z_{\odd}$ in $\pi$ and end by the maximal element among $z_{\odd}$).
Similarly there are no elements larger than $z_0$ among $z_{\even}$ in $\T(\pi)$.

Since $z$ is the lexicographically largest $k$-zigzag with the desired properties, we know that there are no elements larger than $z_1$ among $z_{\even}$.
Also $z_1$ is, by definition, the largest element of $z_{\odd}$ that is less than $z_0$ (otherwise there would exist a lexicographically 
larger $k$-zigzag $z'$ which starts with $z_0$ in $\pi$).
From this we know that $z_1$ is the rightmost element among $z_{\odd}$ in $\T(\pi)$.
The sequence $(z_1,\ldots,z_{k+1})$ is therefore an uninterrupted $(k-1)$-zigzag in $\T(\pi)$.

Iterating the above claim, we have that
if $\pi$ contains an uninterrupted $k$-zigzag, 
then $\T(\pi)$ contains an uninterrupted $(k-1)$-zigzag,
$\T^2(\pi)$ contains an uninterrupted $(k-2)$-zigzag,
and
$\T^{k}(\pi)$ contains an uninterrupted $0$-zigzag.
An uninterrupted $0$-zigzag is simply an inversion, so $\T^k(\pi)$ contains an inversion and is therefore not the identity permutation.
\end{proof}

The following result is the companion to West's result~\cite{west.phd} which tells us that the set of 2-stack sortable permutations is $\sn(2341,3\overline{5}241)$.

\begin{theorem}
$\Tack{n}{2} = \sn(2431,241\overline{5}3)$.
\end{theorem}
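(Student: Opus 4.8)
The plan is to reduce everything to the single-step characterisation $\Tack{n}{1}=\sn(132)$ of Theorem~\ref{THEOREM:1}. Since $\T^2(\pi)=\id$ is the same as $\T\bigl(\T(\pi)\bigr)=\id$, applying Theorem~\ref{THEOREM:1} to the permutation $\T(\pi)$ gives the equivalence
\[
\pi \in \Tack{n}{2} \iff \T(\pi) \in \sn(132).
\]
Thus it suffices to prove that $\T(\pi)$ avoids $132$ if and only if $\pi$ avoids both $2431$ and $241\overline{5}3$. I would prove the two implications separately, using Theorem~\ref{mrbluesky} for one direction and the zigzag theorems for the other.

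For the inclusion $\sn(2431,241\overline{5}3) \subseteq \Tack{n}{2}$ I would simply take the contrapositive of Theorem~\ref{mrbluesky}(ii). If $\pi$ avoids both $2431$ and $241\overline{5}3$, then $\T(\pi)$ cannot contain a $132$ pattern, for otherwise Theorem~\ref{mrbluesky}(ii) would force one of those two forbidden patterns into $\pi$. Hence $\T(\pi)\in\sn(132)$ and so $\pi \in \Tack{n}{2}$. This direction is essentially immediate once Theorem~\ref{mrbluesky} is in hand.

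For the reverse inclusion $\Tack{n}{2}\subseteq\sn(2431,241\overline{5}3)$ I would argue the contrapositive: if $\pi$ contains a $2431$ or a $241\overline{5}3$, then $\T^2(\pi)\neq\id$. The two cases are handled by different tools. For a $2431$ occurrence $(b,d,c,a)$ with $a<b<c<d$ I would show directly that $\T(\pi)$ contains a $132$, which forces $\T^2(\pi)\neq\id$ by Theorem~\ref{THEOREM:1}. Indeed, Lemma~\ref{LEMMA:1} turns the inversions $(b,a)$ and $(c,a)$ of $\pi$ into $a$ preceding both $b$ and $c$ in $\T(\pi)$, while Lemma~\ref{LEMMA:2} applied to the non-inversion $(b,c)$ --- with the larger element $d$ sitting between $b$ and $c$ in $\pi$ --- makes $c$ precede $b$ in $\T(\pi)$; the three elements $a,c,b$ then form a $132$ in $\T(\pi)$. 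For a $241\overline{5}3$ occurrence I would instead observe that it is exactly an uninterrupted $2$-zigzag: a $2413$ occurrence $(b,d,a,c)$ is a $2$-zigzag $(z_0,z_1,z_2,z_3)=(d,c,b,a)$ in the sense of Definition~\ref{zigzag:defn}, and the barred condition ``no $e>d$ between $a$ and $c$ in $\pi$'' is precisely the statement that this $2$-zigzag is uninterrupted. Theorem~\ref{thm:zigzag} with $k=2$ then yields $\T^2(\pi)\neq\id$.

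The main obstacle is the $241\overline{5}3$ case, and the reason it is routed through Theorem~\ref{thm:zigzag} rather than through a direct $132$-hunt like the $2431$ case. The natural candidate $132$ in $\T(\pi)$, namely $a,c,b$, requires $a$ to precede $c$ in $\T(\pi)$, which by Lemma~\ref{LEMMA:2} demands that no element larger than $c$ lie between $a$ and $c$ in $\pi$. The barred hypothesis only forbids elements larger than $d$ there, so an element with value strictly between $c$ and $d$ could spoil this candidate, and the natural attempt to repair it by replacing $c$ with such an element recurses. This is exactly the bookkeeping that the lexicographically-largest-zigzag argument in the proof of Theorem~\ref{thm:zigzag} already resolves, so I would lean on it rather than redo the analysis. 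The only genuinely new content of the proof is therefore the short dictionary identifying $241\overline{5}3$ occurrences with uninterrupted $2$-zigzags, together with the three-line computation in the $2431$ case.
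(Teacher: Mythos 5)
Your proof is correct, and both of its nonstandard steps check out: the three-line computation showing that any $2431$ occurrence $(b,d,c,a)$ produces a $132$ in $\T(\pi)$ (Lemma~\ref{LEMMA:1} puts $a$ before $c$ and before $b$ in $\T(\pi)$; Lemma~\ref{LEMMA:2}, with the intermediate element $d>c$, puts $c$ before $b$), and the dictionary identifying occurrences of $241\overline{5}3$ with uninterrupted $2$-zigzags of type $2413$ (for $k=2$ the interruption condition (i) is vacuous, and condition (ii) reduces exactly to the existence of some $e>z_0=d$ between $a$ and $c$, which is what the barred $5$ forbids). Where you differ from the paper is in how the inclusion $\Tack{n}{2}\subseteq\sn(2431,241\overline{5}3)$ is obtained. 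The paper argues forward from Theorem~\ref{thm:zigzag}: $\T^2(\pi)=\id$ forces avoidance of uninterrupted $2$-zigzags of \emph{both} types, giving $\pi\in\sn(243\overline{5}1)\cap\sn(241\overline{5}3)$, and then closes the gap between $243\overline{5}1$ and $2431$ by a separate direct argument that a $24351$ occurrence (an interrupted $2431$) also yields a $132$ in $\T(\pi)$, finishing with the set identity in (\ref{meone}). You instead take the contrapositive and handle \emph{all} $2431$ occurrences, interrupted or not, with one uniform lemma computation, invoking Theorem~\ref{thm:zigzag} only for the $241\overline{5}3$ case. Your $2431$ computation is essentially the paper's $24351$ patch with the observation that the interrupting element plays no role in it, so your organization is slightly cleaner: it eliminates the set-subtraction bookkeeping, and it isolates the zigzag machinery to the one case where its recursive, lexicographically-largest argument is genuinely needed --- a point your closing discussion diagnoses accurately. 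The other inclusion, via the contrapositive of Theorem~\ref{mrbluesky}(ii), is identical in both treatments.
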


\begin{proof}
Suppose that $\pi \in \sn$ with $\T^2(\pi)=\id$.
According to Theorem~\ref{thm:zigzag}, $\pi$ does not contain an uninterrupted 2-zigzag.
There are two 2-zigzags: 2431 and 2413.
The statement `$\pi$ does not contain an uninterrupted 2-zigzag' is equivalent to `$\pi$ does not contain a 2-zigzag, or if it does then it is interrupted', and this happens iff
$\pi \in \sn(243\overline{5}1) \cap \sn (241\overline{5} 3)$.

Notice, however, that if $\pi$ contains $(b,d,c,e,a)$, where $a<b<c<d<e$, as $(d,c,b,a)$ is an interrupted 2-zigzag then in the permutation $\T(\pi)$;
$a$ will precede $c$ (by Lemma~\ref{LEMMA:3}) and $c$ will precede $b$ (by Lemma~\ref{LEMMA:2}).
This means $(a,c,b)$ is a 132 pattern in $\T(\pi)$.
By Theorem~\ref{THEOREM:1}
$\T^2(\pi) \neq \id$.
Therefore $\pi$ cannot contain the pattern 24351 and so
\begin{align}\label{meone}
\Tack{n}{2} \subseteq \sn(243\overline{5}1) \cap \sn(241\overline{5}3) - \sn(24351) = \sn(2431,241\overline{5}3).
\end{align}
To complete the proof we must show that $\T^2(\pi) \neq \id$ $\Rightarrow$ $\pi$ contains 2431, or $\pi$ contains $241\overline{5}3$.
Suppose that $\T^2(\pi) \neq \id$. Using Theorem~\ref{THEOREM:1} this implies that $\T(\pi)$ contains a 132 pattern. 
Using Theorem~\ref{mrbluesky}(ii) we have that $\pi$ contains the pattern $2431$, or the pattern $241\overline{5}3$.
We may summarise this as
\begin{align}\label{metwo}
\sn \backslash \Tack{n}{2} \subseteq \sn\backslash \sn(2431,241\overline{5}3).
\end{align}
Combining Equations~\ref{meone} and \ref{metwo} we have 
$\Tack{n}{2} =\sn(2431,241\overline{5}3)$. 
\end{proof}

\section{Descent polynomials}
In this section we will determine the descent polynomial over $\Tack{n}{k}$ for several values of $k$ relative to $n$.
Given $A \subseteq \sn$ define 
$$W(A;x) ~=~ \sum_{\pi \in A} x^{1+\des(\pi)}.$$

It is straightforward to see that $\Tack{n}{n-1} = \sn$ and $W(\Tack{n}{n-1};x)$ is therefore $A_n(x)$,
the $n$th Eulerian polynomial.

\begin{theorem}\label{onestack}
$W(\Tack{n}{1};x) = \ds\sum_{0\leq k <n}  \frac{1}{n} \binom{n}{k}\binom{n}{k+1} x^{n-k}= W(\Stack{n}{1};x)$.
\end{theorem}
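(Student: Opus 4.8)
The plan is to invoke Theorem~\ref{THEOREM:1} to replace $\Tack{n}{1}$ by $\sn(132)$, and then read off the descent generating function from the recursive structure of $132$-avoiding permutations. Writing $\pi = L n R$ with $n$ the largest entry, one checks that $\pi$ avoids $132$ precisely when every entry of $L$ exceeds every entry of $R$ and both factors (read as patterns) avoid $132$: were some entry of $R$ larger than some entry of $L$, that entry together with $n$ and the entry of $L$ would form a $132$. Because $n$ is maximal, the step from $L$ into $n$ is always an ascent, while the step from $n$ into $R$ is a descent exactly when $R$ is nonempty. Hence $\des(\pi) = \des(L) + \des(R) + 1$ when $R \neq \emptyset$, and $\des(\pi) = \des(L)$ otherwise.

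Setting $P_n(x) = \sum_{\pi \in \sn(132)} x^{\des(\pi)}$, so that $W(\Tack{n}{1};x) = x\,P_n(x)$, the decomposition yields
$$P_n(x) = P_{n-1}(x) + x \sum_{\substack{\ell + r = n-1 \\ r \geq 1}} P_\ell(x)\,P_r(x), \qquad P_0(x) = 1,$$
where $\ell = |L|$ and $r = |R|$. This already delivers the second equality of the theorem: by the classical description recalled in the introduction, $\Stack{n}{1} = \sn(231)$, and a $231$-avoiding $\pi = LnR$ has the mirror decomposition in which every entry of $L$ is smaller than every entry of $R$, with exactly the same junction descents; its descent generating function therefore obeys the identical recursion and initial condition, so the two polynomials agree for all $n$ by induction. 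Thus $W(\Tack{n}{1};x) = W(\Stack{n}{1};x)$ falls out before any closed form is computed.

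For the explicit formula I would pass to $F(t,x) = \sum_{n \geq 0} P_n(x) t^n$; the recursion translates into $F - 1 = tF + xt\,F(F-1)$, i.e. $xt\,F^2 - (1 - t + xt)F + 1 = 0$, equivalently $y = t(1+y)(1+xy)$ for $y = F - 1$. Lagrange inversion then gives
$$[t^n]\,y = \frac{1}{n}\,[u^{n-1}](1+u)^n(1+xu)^n = \frac{1}{n}\sum_{k} \binom{n}{k}\binom{n}{k+1} x^{k},$$
so $P_n(x)$ is the Narayana polynomial $\sum_{0 \leq k < n} \tfrac{1}{n}\binom{n}{k}\binom{n}{k+1} x^{k}$. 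Multiplying by $x$ and reindexing through the palindromic symmetry $\tfrac{1}{n}\binom{n}{k}\binom{n}{k+1} = \tfrac{1}{n}\binom{n}{n-1-k}\binom{n}{n-k}$ recovers the stated form $\sum_{0 \leq k < n}\tfrac{1}{n}\binom{n}{k}\binom{n}{k+1} x^{n-k}$. The one genuinely computational point is this coefficient extraction, which I expect to be the main obstacle; Lagrange inversion keeps it routine, but alternatively one can verify directly that $\tfrac{1}{n}\binom{n}{k}\binom{n}{k+1}$ satisfies the convolution recursion above via the Vandermonde identity, or simply cite the known fact that descents are Narayana-distributed on $\sn(132)$.
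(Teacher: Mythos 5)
Your proposal is correct, but it takes a genuinely different route from the paper's. The paper's proof is essentially a short citation argument: it invokes Simion's result that descents on $\sn(231)$ are Narayana-distributed, applies Theorem~\ref{THEOREM:1} to identify $\Tack{n}{1}$ with $\sn(132)$, and transports the distribution through the reversal bijection, using $\des(\rev(\pi))=n-1-\des(\pi)$ together with the palindromic symmetry $N(n,k)=N(n,n-1-k)$ to conclude $W(\sn(132);x)=W(\Stack{n}{1};x)$. You instead work from scratch: the block decomposition $\pi=LnR$ of $132$-avoiders (every entry of $L$ above every entry of $R$) and its mirror for $231$-avoiders yield identical descent recursions, so the equidistribution $W(\Tack{n}{1};x)=W(\Stack{n}{1};x)$ follows by induction with no appeal to reversal or descent complementation; the quadratic functional equation $y=t(1+y)(1+xy)$ and Lagrange inversion then re-derive Simion's Narayana formula rather than citing it. Both arguments do use Narayana symmetry at the final reindexing, since the theorem states the exponent as $x^{n-k}$ rather than $x^{k+1}$. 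What each buys: the paper's proof is two lines long and leverages the literature; yours is self-contained, makes the decomposition and junction-descent bookkeeping explicit (including the empty-$L$ and empty-$R$ cases, which you handle correctly), and reproves Simion's theorem as a by-product, at the cost of the extra computation --- all of which (the functional equation, the coefficient extraction $[u^{n-1}](1+u)^n(1+xu)^n=\sum_k\binom{n}{k}\binom{n}{k+1}x^k$, and the recursion matching) checks out.
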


\begin{proof}
Recall that $\Stack{n}{1}=\sn(231)$.
Simion~\cite{simion} showed that $$W(\sn(231);x) = \ds\sum_{k=0}^{n-1} N(n,k)x^{k+1}$$ 
where $N(n,k)=\frac{1}{n}\binom{n}{k}\binom{n}{k+1}$ are the well-known Narayana numbers.
Theorem~\ref{THEOREM:1} showed that $\Tack{n}{1}$ $=$ $\sn(132)$.
Since $\sn(132)$ is easily formed from $\sn(231)$ by reversing the elements, and by using the symmetry of the Narayana numbers $N(n,k)=N(n,n-k-1)$,
we have 
\begin{align*}
W(\sn(132);x) &
= \sum_{\pi \in \sn(231)} x^{1+\des(\rev(\pi))} 
= \sum_{\pi \in \sn(231)} x^{n-\des(\pi)} 
= W(\Stack{n}{1};x).\qedhere
\end{align*}
\end{proof}

\begin{theorem}\label{twostack}
$W(\Tack{n}{2};x) = W(\Stack{n}{2};x)$. 
\end{theorem}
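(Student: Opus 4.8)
The plan is to reduce the claim to the equidistribution of the descent statistic on the two pattern classes $\Tack{n}{2}=\sn(2431,241\overline{5}3)$ (the preceding theorem) and $\Stack{n}{2}=\sn(2341,3\overline{5}241)$ (West~\cite{west.phd}), and then to identify an explicit descent polynomial for the revstack class with that of the two-stack-sortable class. Setting $x=1$ this also yields $|\Tack{n}{2}|=|\Stack{n}{2}|$, so $t=2$ is exactly the boundary case of Conjecture~\ref{econj} at which equality, rather than strict inequality, is expected. I first note that, in contrast to the situation for $t=1$ treated in Theorem~\ref{onestack}, the two classes are \emph{not} interchanged by any of the elementary symmetries (reverse, complement, inverse): for instance $\rev(2341)=1432\neq 2431$, and none of these symmetries carries the generating set $\{2341,3\overline{5}241\}$ to $\{2431,241\overline{5}3\}$. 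Hence the one-line reverse-plus-Narayana-symmetry argument of Theorem~\ref{onestack} does not transfer, and a genuine enumeration is needed.

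The main route is to compute $W(\Tack{n}{2};x)$ directly from the avoidance class $\sn(2431,241\overline{5}3)$, refined by descents. I would set up the bivariate generating function $F(x,y)=\sum_{n\geq 0}\sum_{\pi}x^{1+\des(\pi)}y^{n}$, the inner sum ranging over $\pi\in\sn(2431,241\overline{5}3)$, and derive a functional equation by decomposing each $\pi$ at the position of its largest entry as $\pi=\alpha\,n\,\beta$. Avoidance of $2431$ constrains how entries of $\alpha$ may interact with $\beta$ (an entry of $\alpha$ together with $n$ and a descending pair in $\beta$ would create a $2431$), while the barred pattern $241\overline{5}3$ permits an occurrence of $2413$ only when it is \emph{interrupted} by a larger entry in the prescribed gap. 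This is precisely the interrupted/uninterrupted $2$-zigzag dichotomy of Definition~\ref{zigzag:defn} and Theorem~\ref{thm:zigzag}, so the structural information assembled in Section~2 can be reused to describe the admissible pairs $(\alpha,\beta)$ and to track the descents created at the two junctions around $n$. Solving the resulting equation (by the kernel method, should it carry one catalytic variable) gives a closed form for $W(\Tack{n}{2};x)$, which I would then match against the descent polynomial of $\Stack{n}{2}$, obtained either from the literature on two-stack-sortable permutations or by running the same decomposition on $\sn(2341,3\overline{5}241)$.

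The hard part will be the middle step. The condition imposed by the bar in $241\overline{5}3$ is nonlocal: it forbids a large entry only inside one specific gap of a would-be $2413$, so it does not restrict to $\alpha$ and $\beta$ independently, and a naive split at $n$ does not close into a functional equation. I would handle this by refining the decomposition with an extra catalytic parameter that records the value controlling whether a nascent $2413$ is interrupted, which is exactly what the $\aleft{\pi}{\cdot}$, $\aright{\pi}{\cdot}$ and \emph{among} bookkeeping of Section~2 is designed to supply. As a cleaner but more elusive alternative, one could instead seek a descent-\emph{complementing} bijection $h:\Stack{n}{2}\to\Tack{n}{2}$ with $\des(h(\sigma))=n-1-\des(\sigma)$; combined with the symmetry of $W(\Tack{n}{2};x)$ established in Section~5 this would force the identity at once, but producing such an $h$ explicitly—given that no elementary symmetry does the job—appears to be the genuine difficulty.
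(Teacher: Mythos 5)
There is a genuine gap: your argument is a programme, not a proof. The entire content of the theorem is deferred to the ``middle step'' --- solving a descent-refined functional equation for the barred-pattern class $\sn(2431,241\overline{5}3)$ --- and you yourself flag that the naive decomposition $\pi=\alpha\,n\,\beta$ does not close, because the bar in $241\overline{5}3$ couples $\alpha$ and $\beta$ nonlocally. Introducing ``an extra catalytic parameter'' is named but never executed, and there is no reason to expect this to be routine: the analogous descent-refined enumeration of $\Stack{n}{2}=\sn(2341,3\overline{5}241)$ is the main theorem of Dulucq, Gire and Guibert~\cite{dulucq}, a full paper in its own right. So as written, nothing is proved; moreover, your proposed endgame would still require matching your (hypothetical) closed form against theirs, a second nontrivial verification.

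The paper avoids enumeration entirely, and never even uses the characterisation $\Tack{n}{2}=\sn(2431,241\overline{5}3)$. It works with the operator identity $\T^2=\S\circ\rev\circ\S\circ\rev$: substituting $\sigma=\rev(\pi)$ and using $\des(\rev(\sigma))=n-1-\des(\sigma)$ gives $W(\Tack{n}{2};x)=x^{n}\sum_{\S\circ\rev\circ\S(\sigma)=\id}x^{-\des(\sigma)}$; then a theorem of Bouvel and Guibert~\cite[Thm.~1.3]{bouvel}, asserting that $\des$ is equidistributed on $\{\sigma:\S^2(\sigma)=\id\}$ and $\{\sigma:\S\circ\rev\circ\S(\sigma)=\id\}$, converts this to $x^{n+1}W(\Stack{n}{2};x^{-1})$; finally the palindromic symmetry of $W(\Stack{n}{2};x)$, checked from the explicit formula in~\cite[Cor.~9]{dulucq}, finishes. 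Note that your ``elusive alternative'' --- a descent-complementing correspondence between the two classes --- is essentially what this assembles: $\rev$ supplies the complementation (carrying $\Tack{n}{2}$ onto the $\S\circ\rev\circ\S$-sortable class, not onto $\Stack{n}{2}$, which is why your observation that no elementary symmetry works is correct but not fatal), and Bouvel--Guibert supplies the missing equidistribution. Also, the symmetry you would need to close that route is the known symmetry of $W(\Stack{n}{2};x)$, not the symmetry of $W(\Tack{n}{2};x)$ from Section~5 --- the latter is proved independently there, so your suggestion is not circular, but it leans on the wrong polynomial. The decisive missing idea in your proposal is thus the identification of the correct external input: a nontrivial equidistribution theorem already in the literature, which reduces the whole statement to a four-line change of variables.
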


\begin{proof}
By definition,
\begin{align*}
W(\Tack{n}{2};x) 
&= \ds \sum_{\mycom{\pi \in \sn}{S\circ\mathrm{rev} \circ S \circ \mathrm{rev}(\pi)=\mathrm{id}}} x^{1+\des(\pi)} \\
&= \ds \sum_{\mycom{\sigma \in \sn}{S\circ \mathrm{rev} \circ S (\sigma)=\mathrm{id}}} x^{1+\des(\rev(\sigma))} \\
&= \ds \sum_{\mycom{\sigma \in \sn}{S\circ \mathrm{rev} \circ S (\sigma)=\mathrm{id}}} x^{n-\des(\sigma)} \\
&= x^{n}\ds\sum_{\mycom{\sigma \in \sn}{S\circ\mathrm{rev} \circ S (\sigma)=\mathrm{id}}} (x^{-1})^{\des(\sigma)}.
\end{align*}
Bouvel and Guibert~\cite[Thm. 1.3]{bouvel} recently showed that the descent statistic is equi-distributed on the sets
$\{\pi \in \sn ~:~ \S\circ \S(\pi)=\mathrm{id}\}$
and
$\{\pi \in \sn ~:~ \S\circ \mathrm{rev}\circ \S(\pi)=\mathrm{id}\}$.
This means 
\begin{align*}
x^{n}\ds\sum_{\mycom{\sigma \in \sn}{S\circ \mathrm{rev} \circ S (\sigma)=\mathrm{id}}} (x^{-1})^{\des(\sigma)}
&=
x^{n+1}\ds\sum_{\mycom{\sigma \in \sn}{S^2 (\sigma)=\mathrm{id}}} (x^{-1})^{1+\des(\sigma)}\\
&= x^{n+1} W(\Stack{n}{2};x^{-1})\\
&=W(\Stack{n}{2};x),
\end{align*}
the final equality stemming from the fact that $W(\Stack{n}{2};x)$ is symmetric in the sense that $[x^k]W(\Stack{n}{2};x) = [x^{n+1-k}] W(\Stack{n}{2};x)$ for all $1\leq k \leq n$.
This symmetry is easily checked from the expression for $W(\Stack{n}{2};x)$ given in Dulucq et al.~\cite[Cor. 9]{dulucq}.
\end{proof}

\newcommand{\dtwo}[1]{[#1]^{\downarrow 2}}
Throughout the remainder of this section we will employ some new terminology. 
Given a set $X=\{x_1,\ldots,x_m\}$, let $\Perm{X}$ be the set $\{ x_{\sigma(1)}x_{\sigma(2)} \cdots x_{\sigma(m)} ~:~ \sigma \in \s_m\}$.
For example, $\Perm{\{5,4,1\}} = \{145,154,415,451,514,541\}$.
Given a positive integer $m$, let $\dtwo{m}$ be the set of positive integers no larger than $m$ and which differ from it by an even number:
$$\dtwo{m}=\{m,m-2,m-4,\ldots,1+((m-1)\mod 2)\}.$$
Many of our cases will involve $\dtwo{m}$ for certain values $m$.

\begin{theorem} 
\label{rl:nm1}
For all $n\geq 4$, 
\begin{eqnarray*}
W(\Tack{n}{n-2};x) &=& A_n(x) - A_{\left\lceil \frac{n-1}{2}\right\rceil} (x) A_{\left\lfloor \frac{n-1}{2} \right\rfloor}(x).
\end{eqnarray*}
\end{theorem}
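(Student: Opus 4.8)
The plan is to pass to the complement. Since $\Tack{n}{n-1}=\sn$ yields $A_n(x)=W(\sn;x)$ and $\Tack{n}{n-2}\subseteq\sn$, I would write
$$W(\Tack{n}{n-2};x) = A_n(x) - W\!\left(\sn\setminus\Tack{n}{n-2};x\right),$$
reducing the claim to showing that the descent polynomial of the permutations that are \emph{not} $(n-2)$-revstack sortable equals $A_{\ceil{n-1}{2}}(x)\,A_{\floor{n-1}{2}}(x)$.

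First I would pin down $\sn\setminus\Tack{n}{n-2}$ exactly. A $k$-zigzag consists of $k+2$ values, so an $(n-2)$-zigzag is the longest possible in $\sn$ and must use every entry of $\pi$. By the contrapositive of Theorem~\ref{thm:nozigzag}, any $\pi$ with $\T^{n-2}(\pi)\neq\id$ contains an $(n-2)$-zigzag, which is then all of $\pi$. Conversely, in any $(n-2)$-zigzag the largest entry $z_0$ equals $n$, so no $c>z_0$ can exist and the zigzag is automatically uninterrupted; Theorem~\ref{thm:zigzag} then gives $\T^{n-2}(\pi)\neq\id$. Hence $\sn\setminus\Tack{n}{n-2}$ is precisely the set of permutations that are themselves $(n-2)$-zigzags.

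Next I would read off the structure from Definition~\ref{zigzag:defn}. Condition (a) forces $z_i=n-i$, and condition (b) places every even-indexed entry to the left of $n=z_0$ and every odd-indexed entry to its right. Since $\{1,\dots,n-1\}$ contains $\floor{n-1}{2}$ even indices and $\ceil{n-1}{2}$ odd indices, $n$ occupies a forced position with a left block of $L=\floor{n-1}{2}$ entries and a right block of $R=\ceil{n-1}{2}$ entries, and condition (b) imposes no further constraint within either block. Thus these permutations correspond bijectively to pairs in $\sym_L\times\sym_R$ (after standardising each block). For such a $\pi$ there is no descent immediately before $n$ and always a descent immediately after it, so $\des(\pi)=\des(\alpha)+\des(\beta)+1$ for the standardised blocks $\alpha,\beta$; hence $1+\des(\pi)=(1+\des(\alpha))+(1+\des(\beta))$, the weight factorises, and summing over $\sym_L\times\sym_R$ gives $A_L(x)A_R(x)=A_{\floor{n-1}{2}}(x)\,A_{\ceil{n-1}{2}}(x)$, which is the asserted product. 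Substituting into the complement identity then finishes the proof.

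The main obstacle is the structural step: verifying that the $(n-2)$-zigzag condition fixes the position of $n$ and leaves the two blocks otherwise unconstrained, so that the family factors as $\sym_L\times\sym_R$. Once this independence is established, the descent bookkeeping — one forced descent at $n$ and none at its predecessor — and the factorisation of the weight are routine. The hypothesis $n\ge 4$ is needed only to keep both blocks nonempty, so that the factors are genuinely the Eulerian polynomials $A_L$ and $A_R$.
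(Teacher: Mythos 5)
Your proposal is correct and takes essentially the same route as the paper: both pass to the complement $A_n(x)-W(\sn\setminus\Tack{n}{n-2};x)$, use the contrapositive of Theorem~\ref{thm:nozigzag} to show the complement consists exactly of the permutations containing the full $(n-2)$-zigzag $(n,n-1,\ldots,1)$ — i.e.\ $\pi=LnR$ with $L$ a permutation of $\{n-2,n-4,\ldots\}$ and $R$ of $\{n-1,n-3,\ldots\}$, unconstrained within each block — and then factorise $x^{1+\des(LnR)}=x^{1+\des(L)}\,x^{1+\des(R)}$ to obtain the Eulerian product $A_{\lceil (n-1)/2\rceil}(x)A_{\lfloor (n-1)/2\rfloor}(x)$. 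If anything, you are slightly more explicit than the paper on the converse inclusion, observing that a zigzag with $z_0=n$ is automatically uninterrupted so that Theorem~\ref{thm:zigzag} yields $\T^{n-2}(\pi)\neq\id$, a step the paper leaves implicit.
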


\begin{proof}
We wish to obtain an expression for
\begin{align*}
W(\Tack{n}{n-2};x) &= W(\Tack{n}{n-1};x) - \sum_{\mycom{\pi \in \sn}{\deg_{\T}(\pi)=n-1}} x^{1+\des(\pi)}\\
&= A_n(x) - \sum_{\mycom{\pi \in \sn}{\deg_{\T}(\pi)=n-1}} x^{1+\des(\pi)}.
\end{align*}
We will now classify those permutations $\pi \in \sn$ for which $\deg_{\T}(\pi)=n-1$. 
Suppose that $\pi \in \sn$ with $\deg_{\T}(\pi)= n-1$. Then $\T^{n-1}(\pi)=\id$ and $\T^{n-2}(\pi) \neq \id$.
The contrapositive of Theorem~\ref{thm:nozigzag} tells us that $\pi$ must contain an $(n-2)$-zigzag.
An $(n-2)$-zigzag in a permutation $\pi$ is a strictly decreasing sequence $z=(z_0,z_1,\ldots, z_{n-1})$. 
Because $z$ contains $n$ elements out of a possible $n$ elements, it is the sequence $(n,n-1,\ldots,1)$.
Therefore $\pi = LnR$ where $L \in \Perm{\{n-2,n-4,\ldots\}} = \Perm{\dtwo{n-2}}$ and $R \in \Perm{\{n-1,n-3,\ldots\}} = \Perm{\dtwo{n-1}}$.
In this case $L$ has $\lceil (n-1)/2\rceil$ elements and $R$ has $\lfloor (n-1)/2 \rfloor$ elements.
Therefore
\begin{align*}
W(\Tack{n}{n-2};x)
&= A_n(x) - \sum_{\mycom{L\in\dtwo{n-2}}{R\in\dtwo{n-1}}} x^{1+\des(LnR)}\\
&= A_n(x) - \sum_{\mycom{L\in\dtwo{n-2}}{R\in\dtwo{n-1}}} x^{1+\des(L)+1+\des(R)}\\
&= A_n(x) - \sum_{L\in\dtwo{n-2}} x^{1+\des(L)}\sum_{R\in\dtwo{n-1}} x^{1+\des(R)}\\
&= A_n(x) - A_{\lfloor (n-1)/2 \rfloor}(x) A_{{\lceil (n-1)/2\rceil}}.\qedhere
\end{align*}
\end{proof}

\begin{theorem}
\label{rl:nm2}
For all $n>3$,
\begin{align*}
W(\Tack{n}{n-3};x) &= A_n(x) - A_{\lfloor \frac{n-1}{2} \rfloor}(x) A_{{\lceil \frac{n-1}{2} \rceil}}\\
& \phantom{=} - \left\lfloor\dfrac{n+2}{2}\right\rfloor A_{\lfloor \frac{n-1}{2}\rfloor}(x) A_{\lfloor \frac{n}{2} \rfloor }(x)
- \left\lfloor\dfrac{n-1}{2}\right\rfloor A_{\lfloor \frac{n-2}{2}\rfloor}(x) A_{\lfloor \frac{n+1}{2} \rfloor }(x)\\
& \phantom{=}+A_{\lfloor \frac{n-1}{2} \rfloor}(x) D_{\lfloor \frac{n}{2} \rfloor}(x) - A_{\lfloor \frac{n-2}{2} \rfloor}(x) D_{\lfloor \frac{n+1}{2} \rfloor}(x)
\end{align*}
where 
\begin{align*}
D_n(x) &= \dfrac{1}{2} \sum_i \binom{n-1}{i} A_i(x) A_{n-1-i}(x).
\end{align*}
\end{theorem}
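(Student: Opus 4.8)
The plan is to imitate the proof of Theorem~\ref{rl:nm1} and peel off one further layer. Since $\Tack{n}{n-3}\subseteq\Tack{n}{n-2}$, I would write
$$W(\Tack{n}{n-3};x)=W(\Tack{n}{n-2};x)-\sum_{\substack{\pi\in\sn\\ \deg_{\T}(\pi)=n-2}}x^{1+\des(\pi)},$$
insert the value of $W(\Tack{n}{n-2};x)$ supplied by Theorem~\ref{rl:nm1}, and then evaluate the correction sum over the set $E=\{\pi\in\sn:\deg_{\T}(\pi)=n-2\}$. Everything therefore reduces to classifying $E$ and computing its descent generating function.

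For the classification I would use the two zigzag theorems. An $(n-2)$-zigzag uses all $n$ values, hence is the decreasing word $(n,n-1,\dots,1)$; having no element above its top it is automatically uninterrupted, so Theorems~\ref{thm:nozigzag} and~\ref{thm:zigzag} combine to the clean equivalence $\T^{n-2}(\pi)=\id\iff\pi$ contains no $(n-2)$-zigzag. For the other half of the degree condition, $\T^{n-3}(\pi)\neq\id$ forces (contrapositive of Theorem~\ref{thm:nozigzag}) that $\pi$ contains an $(n-3)$-zigzag; such a zigzag uses $n-1$ of the $n$ values, so it omits exactly one value $m$ and has top $z_0\in\{n,n-1\}$. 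I would split $E$ according to whether $m\neq n$ (top $n$) or $m=n$ (top $n-1$).

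The case $m\neq n$ is the clean one. Here $\pi=LnR$ and, the top of the zigzag being the global maximum, the zigzag is uninterrupted and effective, so membership in $E$ depends only on which values fall left and right of $n$. I would show that the admissible value-splits are exactly those whose side-assignment, read from $n-1$ down to $1$, agrees with the ``perfect'' split $\Perm{\dtwo{n-2}}\,n\,\Perm{\dtwo{n-1}}$ on a top segment of values and is flipped on the complementary bottom segment (the boundary value $m$ being free); equivalently the match/flip indicator is a non-increasing $0/1$ word. There are exactly $n-1$ such non-perfect splits (the perfect split itself giving the degree-$(n-1)$ permutations already removed in Theorem~\ref{rl:nm1}). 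A short parity count shows each non-perfect split has side-sizes $\{\lfloor\frac{n-1}{2}\rfloor,\lfloor\frac{n}{2}\rfloor\}$ or $\{\lfloor\frac{n-2}{2}\rfloor,\lfloor\frac{n+1}{2}\rfloor\}$ according to the parity of the number of flipped values, and summing $x^{1+\des(L)}x^{1+\des(R)}=A_{|L|}(x)A_{|R|}(x)$ over all arrangements of the two sides produces the $A_iA_j$ contributions, distributed by size into the two product-families appearing in the statement.

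The case $m=n$ is where the real difficulty lies and is the source of the $D$-terms. Now the zigzag lives on $\{1,\dots,n-1\}$ with top $n-1$, which forces the restriction $\pi|_{\{1,\dots,n-1\}}$ to be the full $(n-3)$-zigzag (a perfect split about $n-1$), while the extra value $n$ is larger than every zigzag element and may interrupt it. Crucially, an interrupted $(n-3)$-zigzag need \emph{not} witness $\T^{n-3}(\pi)\neq\id$, yet $\pi$ may still have degree $n-2$: for $n=5$ one checks that $24351$ has degree $3$ although its only $2$-zigzag is interrupted, whereas $24153$ has degree $2$. I would therefore analyse precisely which positions of $n$ relative to the two halves of the $(n-1)$-zigzag preserve $\deg_{\T}=n-2$, remove by inclusion--exclusion those $m=n$ permutations already counted in the case $m\neq n$, and sum $x^{1+\des}$ over the admissible placements of $n$ jointly with the free arrangements of the two halves. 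This joint count (the admissibility couples the position of $n$ to the arrangement of the halves) collapses, after the overlap correction and a generating-function simplification, into the convolution polynomials $D_{\lfloor n/2\rfloor}(x)$ and $D_{\lfloor(n+1)/2\rfloor}(x)$, the factor $\tfrac12$ reflecting the symmetry between the two halves. Assembling the two cases and simplifying---using that for even $n$ the two $A_iA_j$ families coincide (giving the combined coefficient $\lfloor\frac{n+2}{2}\rfloor+\lfloor\frac{n-1}{2}\rfloor=n$) and the $D$-terms cancel, while for odd $n$ they remain distinct---yields the stated closed form. I expect the $m=n$ analysis, in particular the interruption-versus-effectiveness dichotomy and the emergence of $D$ with its $\tfrac12$, to be by far the hardest step.
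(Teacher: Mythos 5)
Your proposal is correct and follows essentially the same route as the paper: subtract the descent generating function of $\{\pi\in\sn:\deg_{\T}(\pi)=n-2\}$ from $W(\Tack{n}{n-2};x)$, classify that set via $(n-3)$-zigzags using Theorems~\ref{thm:nozigzag} and~\ref{thm:zigzag}, and your two cases match the paper's exactly --- your ``$m\neq n$'' case with its $n-1$ monotone match/flip value-splits is a clean repackaging of the paper's cases (b), (c), (d), with the side-size parity count agreeing with the paper's tally of $\lfloor n/2\rfloor$ and $\lfloor (n-1)/2\rfloor$ splits of the two size-types, while your ``$m=n$'' case is the paper's case (a), where the precise positional constraints on $n$ (in the paper: $n$ precedes $n-3$ in $L$, respectively $n-2$ precedes $n$ in $R$) yield the terms $L_{\lfloor n/2\rfloor}(x)A_{\lfloor(n-1)/2\rfloor}(x)$ and $A_{\lfloor(n-2)/2\rfloor}(x)D_{\lfloor(n+1)/2\rfloor}(x)$, converted via $L_m(x)=A_m(x)-D_m(x)$ and the binomial convolution identity for $D_m(x)$. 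Your witnesses $24351$ and $24153$ for the interruption subtlety check out, and the only divergence is bookkeeping: the paper conditions on the lexicographically largest $(n-3)$-zigzag to make the cases disjoint, where you propose an equivalent inclusion--exclusion.
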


\begin{proof}
We wish to obtain an expression for
\begin{align*}
W(\Tack{n}{n-3};x) &= W(\Tack{n}{n-2};x) - \sum_{\mycom{\pi \in \sn}{\deg_{\T}(\pi)=n-2}} x^{1+\des(\pi)}.
\end{align*}
Theorem~\ref{rl:nm1} gives an expression for $W(\Tack{n}{n-2})$. 
We will now classify those permutations $\pi \in \sn$ for which $\deg_{\T}(\pi)=n-2$. 
Suppose that $\pi \in \sn$ with $\deg_{\T}(\pi)= n-2$. Then $\T^{n-2}(\pi)=\id$ and $\T^{n-3}(\pi) \neq \id$.
The contrapositive of Theorem~\ref{thm:nozigzag} tells us that $\pi$ must contain an $(n-3)$-zigzag.
(Theorem~\ref{thm:zigzag} does not assist us in this situation as it translates into a statement concerning a zigzag too large to fit into the permutation.)

Therefore will check all types of permutations $\pi$ in $\sn$ which contain an $(n-3)$-zigzag and classify those for which $\deg_{\T}(\pi)=n-2$.
In order to do this, we will condition on the lexicographically largest zigzag in each case so as to avoid overlapping cases.

An $(n-3)$-zigzag in a permutation $\pi$ is a strictly decreasing sequence $z=(z_0,\ldots, z_{n-2})$. Let us suppose that $z$ is maximal in the sense that
it is the lexicographically largest $(n-3)$-zigzag in $\pi$. Because $z$ contains $n-1$ elements out of a possible $n$ elements, and because it is strictly decreasing, it must be the sequence $(n,n-1,\ldots,1)$ with one element removed. We indicate this using a hat above the element that is removed:
$$z=(n,n-1,\ldots,\hat{i},\ldots,1).$$
To perform this classification, we must split the analysis into four distinct cases:
(a) $i=n$ (b) $i=n-1$ (c) $i=n-2$ and (d) $1\leq i \leq n-3$.

\noindent {\bf{Case (a) $i=n$:}}\\
In this case the $(n-3)$-zigzag is $(n-1,n-2,\ldots,1)$. 
Let $\pi'$ be $\pi$ with $n$ removed. Then $\pi'$ must be of the form $L(n-1)R$ where $L \in \Perm{\dtwo{n-3}}$ and $R \in \Perm{\dtwo{n-2}}$.
All that remains is to carefully check how the position of $n$ in $\pi$ changes the degree of $\pi$ with respect to $\T$.
We must be careful that by inserting $n$ in different places, the resulting
permutation does not contain an $(n-2)$-zigzag (as these were taken care of previously in Theorem~\ref{rl:nm1}).

Let $L_1L_2\in \Perm{\dtwo{n-3}}$ and $R_1R_2 \in \Perm{\dtwo{n-2}}$.
From Theorem~\ref{rl:nm1}, the permutation $\pi\in\sn$ must have one of two forms;
\begin{itemize}
\item $\pi=L_1nL_2(n-1)R_1R_2$: 
	First observe that if $L_2= \emptyset$, then $z'=(n,n-2,n-3,\ldots,1)$ will be an $(n-3)$-zigzag that is lexicographically greater than $z=(n-1,\ldots,1)$. This is forbidden so we must have $L_2\neq \emptyset$.
	Bearing this in mind, 
	\begin{align*}
	 \T(\pi) &= \T(L_1nL_2 (n-1)R_1R_2) \\ &= \T(R_1R_2)\T(L_2)(n-1)\T(L_1)n,\end{align*}
	where $\T(R_1R_2)$ ends in $(n-2)$. Suppose $\T(R_1R_2)=R_3(n-2)$ where $R_3\in\Perm{\dtwo{n-4}}$.
	Then
	\begin{align*}
	\T^2(\pi) &= \T(R_3(n-2)\T(L_2)(n-1)\T(L_1)n) \\
			&= \T^2(L_1)\T^2(L_2)\T(R_3)(n-2)(n-1)n.
	\end{align*}
	Notice that $\T^2(L_1)\T^2(L_2)\T(R_3) \in \sym_{n-3}$.
	Since $L_2 \neq \emptyset$, we have $\deg_{\T}(\pi)=n-2$ iff $\deg_{\T}(\T^2(\pi)) = n-4$, and this is true iff $n-3 \in \T^2(L_2)$.
	The set of all permutations satisfying $\deg_{\T}(\pi)=n-2$ in this case is
	\begin{align}
	\pi &= L(n-1)R\;\in\; \sn \nonumber\\
	L &\in \Perm{\{n,n-3,n-5,\ldots\} } \mbox{ and $n$ precedes $(n-3)$ in $L$}  \label{saturday}\\
	R &\in \Perm{\{n-2,n-4,\ldots\}}. \nonumber
	\end{align}
\item $\pi=L_1L_2(n-1)R_1nR_2$: 
	If $R_1 = \emptyset$ then $\pi$ will contain a $(n-2)$-zigzag that is lexicographically larger than $(n-1,\ldots,1)$. 
	This is forbidden and so we have $R_1\neq \emptyset$.
	With this in mind,
	$$\T(\pi) = \T(R_2) \T(R_1)\T(L_1L_2)(n-1)n.$$
	The permutation $\T(R_2) \T(R_1)\T(L_1L_2) \in \sym_{n-2}$ has degree $n-3$ iff $\T(R_2)\T(R_1)$ ends in $n-2$.
	This happens iff $(n-2) \in R_1$ (since we already showed that $R_1$ cannot be empty).
	The set of all permutations for which this is true is
	\begin{align}
	\pi &= L(n-1)R\;\in\;\sn \nonumber\\
	L &\in \Perm{\{n-3,n-5,\ldots \}} \label{sunday}\\
	R &\in \Perm{\{n,n-2,n-4,\ldots\} } \mbox{ and $n-2$ precedes $n$ in $R$.} \nonumber
	\end{align}
\end{itemize}

\noindent {\bf{Case (b) $i=n-1$:}}\\
In this case the maximal $(n-3)$-zigzag is $(n,n-2,n-3,\ldots ,1)$.
We will consider how the position of $(n-1)$ in $\pi$ affects the degree of the
resulting permutation. The permutation $\pi$ must have the form $L_1(n-1)L_2nR_1R_2$ or $L_1L_2nR_1(n-1)R_2$ 
where $L_1L_2\in\Perm{\dtwo{n-3}}$ and $R_1R_2 \in \Perm{\dtwo{n-2}}$
(otherwise there will be a lexicographically larger $(n-3)$-zigzag starting with $(n,n-1,\ldots)$ which is forbidden).
\begin{itemize}
\item $\pi=L_1(n-1)L_2nR_1R_2$:
	In this case $\T(\pi) = \T(R_1R_2) \T(L_2)\T(L_1) (n-1)n$. 
	Since $\T(R_1R_2)$ ends in $(n-2)$ we have that $\deg_{\T}(\T(R_1R_2) \T(L_2)\T(L_1)) = n-3$ 
	and so $\deg_{\T}(\pi) = n-2$. All permutations of this form have $\deg_{\T}(\pi) = n-2$.
	This collection is
	\begin{align}
	\pi&= LnR \; \in\; \sn \nonumber\\
	L&\in\Perm{\{n-1,n-3,\ldots\} }\label{monday}\\ 
	R&\in\Perm{\{n-2,n-4,\ldots\}}. \nonumber
	\end{align}
\item $L_1L_2nR_1(n-1)R_2$:
	This permutation contains another $(n-3)$-zigzag starting with $(n,n-1,n-3,n-4,\ldots)$. 
	This case is therefore not relevant since it violates the assumption that $(n,n-2,n-3,\ldots ,1)$ is maximal.
\end{itemize}

\noindent {\bf{Case (c) $i=n-2$:}}\\
The maximal $(n-3)$-zigzag in this case is $(n,n-1,n-3,\ldots ,1)$ so we will consider how the position of $(n-2)$ affects the degree of the
resulting permutation. The permutation $\pi$ is either of the form $L_1(n-2)L_2nR_1R_2$ or $L_1L_2nR_1(n-2)R_2$ where
$L_1L_2\in\Perm{\{n-3,n-5,\ldots\} }$ and $R_1R_2 \in \Perm{\{n-1,n-4,n-6,\ldots\}}$.
\begin{itemize}
\item $\pi=L_1(n-2)L_2nR_1R_2$: 
	This contains an $(n-3)$-zigzag that begins with $(n,n-1,n-2,n-4,\ldots)$ and therefore
	contradicts the assumption that $(n,n-1,n-3,\ldots ,1)$ is maximal amongst $(n-3)$-zigzags.
\item $\pi=L_1L_2nR_1(n-2)R_2$: 
	We have $$\T(\pi) = \T(L_1L_2nR_1(n-2)R_2) = \T(R_1(n-2)R_2) \T(L_1L_2) n.$$ 
	Consider the two sub-cases: $n-1 \in R_1$ and $n-1 \in R_2$.
	
	{\it{Sub-case $n-1 \in R_1$}}: If $n-1 \in R_1$ then $R_1=R_3(n-1)R_4$ and $R_2R_3R_4$ is a permutation of $\dtwo{n-4}$. 
	Consequently
	\begin{align*}
	\T(\pi)&=\T(R_3(n-1)R_4(n-2)R_2) \T(L_1L_2) n \\ &= \T(R_2)\T(R_4)(n-2)\T(R_3) (n-1) \T(L_1L_2) n 
	\end{align*}
	and $\T^2(\pi) = \T^2(L_1L_2) \T^2(R_3) \T(\T(R_2)\T(R_4)) (n-2)(n-1)n$. This is a permutation
	of the form $A(n-3)B(n-2)(n-1)n$ with $\T^2(L_1L_2)=A(n-3)$ where $A$ is permutation of $\dtwo{n-5}$ and 
	$B$ is a permutation of $\dtwo{n-4}$.
	We therefore have $\deg_{\T}(\T^2(\pi))=n-4$ which means $\deg_{\T}(\pi) = n-2$. 
	All permutations of this form have degree $n-2$.

	{\it{Sub-case $n-1 \in R_2$}}: If $n-1\in R_2$ then $R_2=R_3(n-1)R_4$ and $R_1R_3R_4$ is a permutation in $\Perm{\dtwo{n-4}}$.
	Consequently
	\begin{align*}
	\T(\pi) &=\T(R_1(n-2)R_3(n-1) R_4) \T(L_1L_2) n \\ 
	 	&= \T(R_4)\T(R_3)\T(R_1)(n-2)(n-1) \T(L_1L_2) n 
	\end{align*}
	and 
	\begin{align*}
		\T^2(\pi) &= \T(\T(R_4)\T(R_3)\T(R_1)(n-2)(n-1) \T(L_1L_2) n) \\
			  &= \T^2(L_1L_2) \T(\T(R_4)\T(R_3)\T(R_1)) (n-2)(n-1)  n .
		\end{align*}
	Since $\T^2(L_1L_2)$ ends in $n-3$ (we know this because $n-3 \in L_1L_2$ and so $n-3 \not\in R_1R_3R_4$) and 
	$$\T^2(L_1L_2) \T(\T(R_4)\T(R_3)\T(R_1))$$ is a permutation of the form $A(n-3)B \in \sym_{n-3}$
	with $\T^2(L_1L_2)=A(n-3)$
	where $A \in \Perm{\dtwo{n-5}}$ and $B\in\Perm{\dtwo{n-4}}$, we have $\deg_{\T}(\T^2(\pi)) = n-4$ which gives $\deg_{\T}(\pi)=n-2$.
	All permutations of this form have degree $n-2$.

	Combining both sub-cases yields the following classification of permutations for this case:
	\begin{align}
	\pi&=LnR\;\in\;\sn \nonumber \\
	L &\in \Perm{\{n-3,n-5,\ldots\}}    \label{thursday}\\
	R&\in\Perm{\{n-1,n-2,n-4,\ldots\}}. \nonumber
	\end{align}
\end{itemize}

\noindent {\bf{Case (d) $1\leq i \leq n-3$:}}\\
This case is in many ways similar to Case (c) so we only will highlight the main difference.
It turns out that in order to obtain $z$ maximal among $(n-3)$-zigzags,
the restriction in this case is that two consecutive elements of $\{1,\ldots,n-2\}$, let us call the pair $\overline{i}=\{i,i+1\}$, 
must both be either to the left or to the right of $n$ in the permutation $\pi$. 
Then the sequence of elements $n-1,n-2,\ldots ,i+2,\overline{i},i-1,\ldots 1$ alternate from right to left to right etc. of $n$ (for any value $1\leq i\leq n-3$).
The result depends, of course, on the parity of $i$ (i.e. whether we have $i$ and $i+1$ both to the left, or to the right, of $n$ in $\pi$).
\begin{itemize}
\item If $n-i$ is odd then $i$ and $i+1$ are both to the left of $n$ in $\pi$. 
The number of possible values of $i$ is $\lfloor (n-2)/2\rfloor$.
The sizes of $L$ and $R$ in this case are then $\lfloor n/2 \rfloor$ and $\lfloor (n-1)/2\rfloor$, respectively.
\item If $n-i$ is even then $i$ and $i+1$ are both to the right of $n$ in $\pi$. The number of possible values of $i$ is $\lfloor (n-3)/2 \rfloor$.
The sizes of $L$ and $R$ are $\lfloor (n-2)/2\rfloor$ and $\lfloor (n+1)/2\rfloor$, respectively.
\end{itemize}
The class of these permutations given by this classification and for which $\deg_{\T}(\pi)= n-2$ is
\begin{align}
\pi&= LnR \; \in\; \sn \nonumber \\
L &\in \Perm{\{n-2,n-4,\ldots ,i+1,i,i-2,i-4,\ldots\}} \label{friday}\\
R &\in \Perm{\{n-1,n-3,\ldots ,i+2,i-1,i-3,\ldots\} },  \nonumber \\
\noalign{for all appropriate $i \in [1,n/2]$ when $n-i$ is odd, and} \nonumber 
\pi&= LnR \; \in\; \sn \nonumber \\
L &\in \Perm{\{n-2,n-4,\ldots,i+2,i-1,i-3,\ldots\}} \label{noday}\\
R &\in \Perm{\{n-1,n-3,\ldots ,i+3,i+1,i,i-2,\ldots\} }, \nonumber\\
\noalign{for all appropriate $i \in [1,n/2]$ when $n-i$ is even.} \nonumber 
\end{align}

The permutations in Equations ~\ref{saturday}--\ref{noday} are all of the permutations $\pi\in \sn$ for which $\deg_{\T}(\pi) = n-2$.
We will use these to calculate the descent polynomial
\begin{align}\label{despol}
\sum_{\mycom{\pi\in\sn}{\deg_{\T}(\pi)=n-2}} x^{1+\des(\pi)}.
\end{align}
In some of the classifications of permutations, we see that the sum is over all permutations in a set except
that the largest value must always be to the left or right of the second largest value. (See for example Equations~\ref{saturday} and \ref{sunday}.)
To accommodate descent polynomials over these restricted sets we define:
\begin{align*}
D_n(x) &= \sum_{\mycom{\pi \in \sn}{{\pi^{-1}(n) > \pi^{-1}(n-1)}}} x^{1+\des(\pi)} \quad \mbox{ and } \quad
L_n(x) = \sum_{\mycom{\pi \in \sn}{{\pi^{-1}(n) < \pi^{-1}(n-1)}}} x^{1+\des(\pi)}.
\end{align*}
Clearly $D_n(x)+L_n(x)=A_n(x)$.
In order to give an expression for $D_n(x)$ in terms of polynomials that we know, we do as follows.
The polynomial $D_n(x)$ is the Eulerian distribution on length $n$ permutations for which $n$ is to the right of $n-1$ in the permutation.

Split the sum over those permutations $\pi\in\sn$ for which (i) $\pi_n=n$ and (ii) $\pi_j=n$ for some $1\leq j \leq n-1$.
        In the second case, there are $\binom{n-2}{j-2}$ ways of choosing the elements which precede $n$ in $\pi$, because $n-1$ must be one of them.
        Let us write $\pi = \pi ' n \pi''$.
        So we have
        \begin{eqnarray*}
        D_n(x) &=& \sum_{\mycom{\pi \in \sn}{\pi_n=n}} x^{1+\des(\pi)} +
                \sum_{j=2}^{n-1}
                \sum_{\mycom{A\subseteq \{1,\ldots,n-2\}}{|A|=j-2}}
                \sum_{\mycom{\pi' \in \LittlePerm(A\cup \{n-1\})}{\pi '' \in \LittlePerm(\{1,\ldots,n-2\}-A)}}
                        x^{1+\des(\pi=\pi' n \pi'')}
        \end{eqnarray*}
        where $\pi = \pi ' n \pi''$ and $\LittlePerm(X)$ is the set of all permutations of the elements in $X$. Clearly the first sum is $A_{n-1}(x)$.
        Since $\des(\pi)=\des(\pi ' ) + 1 +d(\pi'')$ for all $2\leq j \leq n-1$, we have $x^{1+\des(\pi)} = x^{1+\des(\pi ' )} x^{1+\des(\pi '')}$.
        Replacing this in the second summation, we find
        \begin{eqnarray*}
        D_n(x) &=& A_{n-1}(x) +
        \sum_{j=2}^{n-1} \binom{n-2}{j-2} A_{j-1}(x) A_{n-j}(x)
        \end{eqnarray*}
        Now
        \begin{eqnarray*}
        \lefteqn{\sum_{j=2}^{n-1} \binom{n-2}{j-2} A_{j-1}(x) A_{n-j}(x)} \\
        &=& \sum_{j=1}^{n-2} \binom{n-2}{j-1} A_{j}(x) A_{n-1-j}(x) \\
        &=& \dfrac{1}{2} \left( \sum_{j=1}^{n-2} \binom{n-2}{j-1} A_j(x) A_{n-1-j}(x) + \sum_{i=1}^{n-2} \binom{n-2}{n-2-i} A_{n-1-i}(x) A_i(x)   \right) \\
        &=& \dfrac{1}{2} \left( \sum_{j=1}^{n-2} \binom{n-2}{j-1} A_j(x) A_{n-1-j}(x) + \sum_{j=1}^{n-2} \binom{n-2}{j} A_{n-1-j}(x) A_j(x)   \right) \\
        &=& \dfrac{1}{2} \sum_{j=1}^{n-2} \binom{n-1}{j} A_j(x) A_{n-1-j}(x).
        \end{eqnarray*}
        Since $A_{n-1}(x) = \frac{1}{2} \left( A_0(x)A_{n-1}(x) + A_{n-1}(x) A_{0}(x)   \right)$, we have
\begin{align} \label{didentity}
D_n(x) &= \dfrac{1}{2} \sum_i \binom{n-1}{i} A_i(x) A_{n-1-i}(x).
\end{align}
The sums over all permutations in each of Equations ~\ref{saturday}--\ref{noday} are given in the table in Figure~\ref{fig:one}, and their total noted in the 
final row. 

\begin{figure}
$$\begin{array}{|c|c|} \hline
\mbox{A} & \ds\sum_{\pi\in \mathrm{Equation\; A}  } x^{1+\des(\pi)} \\ \hline \hline
\mbox{(\ref{saturday})} & L_{\lfloor \frac{n}{2}\rfloor}(x) A_{\lfloor \frac{n-1}{2}\rfloor }(x) \\[0.5em]
\mbox{(\ref{sunday})} & A_{\lfloor \frac{n-2}{2}\rfloor}(x) D_{\lfloor \frac{n+1}{2}\rfloor }(x)  \\[0.5em]
\mbox{(\ref{monday})} & A_{\lfloor \frac{n}{2}\rfloor}(x) A_{\lfloor \frac{n-1}{2} \rfloor }(x) \\[0.5em]
\mbox{(\ref{thursday})} & A_{\lfloor \frac{n-2}{2}\rfloor}(x) A_{\lfloor \frac{n+1}{2} \rfloor }(x) \\[0.5em]
\mbox{(\ref{friday})} & \left\lfloor\frac{n-2}{2}\right\rfloor A_{\lfloor \frac{n}{2}\rfloor}(x) A_{\lfloor \frac{n-1}{2} \rfloor }(x) \\[0.5em]
\mbox{(\ref{noday})} & \left\lfloor\frac{n-3}{2}\right\rfloor A_{\lfloor \frac{n-2}{2}\rfloor}(x) A_{\lfloor \frac{n+1}{2} \rfloor }(x) \\[0.5em] \hline\hline
\mbox{Total} & \left\lfloor\frac{n}{2}\right\rfloor A_{\lfloor \frac{n}{2}\rfloor}(x) A_{\lfloor \frac{n-1}{2} \rfloor }(x)
	+\left\lfloor\frac{n-1}{2}\right\rfloor A_{\lfloor \frac{n-2}{2}\rfloor}(x) A_{\lfloor \frac{n+1}{2} \rfloor }(x)	 \\[0.5em]
\star \star \star & + L_{\lfloor \frac{n}{2}\rfloor}(x) A_{\lfloor \frac{n-1}{2}\rfloor }(x) + A_{\lfloor \frac{n-2}{2}\rfloor}(x) D_{\lfloor \frac{n+1}{2}\rfloor }(x) \\ \hline
\end{array}$$
\caption{Contributions to the descent polynomial from different cases.\label{fig:one}}
\end{figure}
We thus have
\begin{align*}
\lefteqn{W(\Tack{n}{n-3};x)}\\
&= W(\Tack{n}{n-2};x) - (\star\star\star) \\
&= A_n(x) - A_{\lfloor \frac{n-1}{2} \rfloor}(x) A_{{\lceil \frac{n-1}{2}\rceil}}  - \left\lfloor\frac{n}{2}\right\rfloor A_{\lfloor \frac{n}{2}\rfloor}(x) A_{\lfloor \frac{n-1}{2} \rfloor }(x)
-\left\lfloor\frac{n-1}{2}\right\rfloor A_{\lfloor \frac{n-2}{2}\rfloor}(x) A_{\lfloor \frac{n+1}{2} \rfloor }(x)\\
&\phantom{= } - L_{\lfloor \frac{n}{2}\rfloor}(x) A_{\lfloor \frac{n-1}{2}\rfloor }(x) - A_{\lfloor \frac{n-2}{2}\rfloor}(x) D_{\lfloor \frac{n+1}{2}\rfloor }(x).
\end{align*}
The statement of the theorem follows by substituting $L_{\lfloor \frac{n}{2}\rfloor}(x)=A_{\lfloor \frac{n}{2}\rfloor}(x) - D_{\lfloor \frac{n}{2}\rfloor}(x).$
\end{proof}

\section{Steingr\'imsson's conjecture}
In this section we will use the results of Section 3 to prove Steingr\'imsson's conjecture (Conjecture~\ref{econj}) in some special cases.

\begin{theorem}
Let $n\geq 3$ be fixed. Then $|\Stack{n}{k}|\leq |\Tack{n}{k}|$ for $k=0,1,2,n-3,n-2$, and $n-1$.
This inequality is strict for all of these pairs $(n,k)$ that satisfy $2<k<n-1$
\end{theorem}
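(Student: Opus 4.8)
The plan is to read off cardinalities by evaluating each descent polynomial at $x=1$, since $W(A;1)=\sum_{\pi\in A}1=|A|$, and then compare; throughout we use $A_m(1)=m!$ and, from Equation~\eqref{didentity}, $D_m(1)=\tfrac12\sum_i\binom{m-1}{i}\,i!\,(m-1-i)!=\tfrac{m!}{2}$. Write $\deg_{\S}(\pi)$ for the least $i$ with $\S^i(\pi)=\id$, the analogue of $\deg_{\T}$. For the four values $k\in\{0,1,2,n-1\}$ the inequality is an equality: for $k=0$ both $\Stack{n}{0}$ and $\Tack{n}{0}$ equal $\{\id\}$; for $k=n-1$ both equal $\sn$; and for $k=1,2$ Theorems~\ref{onestack} and~\ref{twostack} give $W(\Tack{n}{k};x)=W(\Stack{n}{k};x)$, hence equal cardinalities. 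Each such $k$ satisfies $k\le 2$ or $k=n-1$, so none lies in the open range $2<k<n-1$ and only the weak inequality is asserted there. It remains to treat $k=n-2$ and $k=n-3$; for $n\le 4$ (resp.\ $n\le 5$) these coincide with an already-settled value, so we may assume $n\ge 5$ (resp.\ $n\ge 6$) and must establish strict inequality.

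Since $\{\deg\le k\}$ and $\{\deg>k\}$ partition $\sn$, we have $|\Stack{n}{k}|=n!-|\{\deg_{\S}>k\}|$ and $|\Tack{n}{k}|=n!-|\{\deg_{\T}>k\}|$, so it suffices to prove $|\{\deg_{\S}>k\}|>|\{\deg_{\T}>k\}|$. For $k=n-2$ the set $\{\deg>n-2\}$ is just $\{\deg=n-1\}$, the permutations needing the maximal number of passes. Putting $x=1$ in Theorem~\ref{rl:nm1} gives $|\Tack{n}{n-2}|=n!-\lceil\tfrac{n-1}{2}\rceil!\,\lfloor\tfrac{n-1}{2}\rfloor!$, so $|\{\deg_{\T}=n-1\}|=\lceil\tfrac{n-1}{2}\rceil!\,\lfloor\tfrac{n-1}{2}\rfloor!$. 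The first step is to show $\S$ has strictly more such permutations, namely $|\{\deg_{\S}=n-1\}|=(n-2)!$; I would establish this from the recursion $\deg_{\S}(\pi)=1+\deg_{\S}(\S(L)\S(R))$, valid for any $\pi=LnR\neq\id$ because $n$ is sent to, and thereafter fixed at, the last position, which reduces the count to a sum of $\S$-preimage counts over the degree-$(n-2)$ permutations of $\s_{n-1}$. Granting this, the required inequality is $(n-2)!>\lceil\tfrac{n-1}{2}\rceil!\,\lfloor\tfrac{n-1}{2}\rfloor!$. Writing $a=\lfloor\tfrac{n-1}{2}\rfloor$ and $b=\lceil\tfrac{n-1}{2}\rceil$ with $a+b=n-1$, we have $a!\,b!=(n-1)!/\binom{n-1}{a}$ and $(n-2)!=(n-1)!/(n-1)$, so the inequality is equivalent to $\binom{n-1}{\lfloor(n-1)/2\rfloor}>n-1$, which holds precisely for $n\ge 5$ (with equality at $n=4$, consistent with that being one of the equality cases).

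For $k=n-3$ one has $\{\deg>n-3\}=\{\deg=n-2\}\cup\{\deg=n-1\}$. The revstack side is explicit: subtracting the $x=1$ value of Theorem~\ref{rl:nm2} from $n!$ gives $|\{\deg_{\T}\in\{n-2,n-1\}\}|=n!-W(\Tack{n}{n-3};1)$ as a fixed combination of products of two factorials (using $A_m(1)=m!$ and $D_m(1)=m!/2$). On the stack side, $|\{\deg_{\S}\in\{n-2,n-1\}\}|=(n-2)!+|\{\deg_{\S}=n-2\}|$, and by the same recursion $\deg_{\S}(\pi)=1+\deg_{\S}(\S(L)\S(R))$ the second summand reduces to a sum of $\S$-preimage counts over the degree-$(n-3)$ permutations of $\s_{n-1}$. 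The goal is then the elementary inequality $(n-2)!+|\{\deg_{\S}=n-2\}|>|\{\deg_{\T}\in\{n-2,n-1\}\}|$ for all $n\ge 6$, which after clearing the floor and ceiling expressions becomes a comparison of sums of products of two factorials, to be verified by a direct estimate together with a base case or two (a sufficiently strong lower bound on $|\{\deg_{\S}=n-2\}|$ would already suffice in place of an exact value).

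The main obstacle is the stack-sort enumeration of the high-degree classes, and in particular of $\{\deg_{\S}=n-2\}$: unlike the revstack side, where Theorems~\ref{rl:nm1} and~\ref{rl:nm2} hand us exact descent polynomials, the degree-$(n-2)$ stack count has no evident product formula, so the preimage/recursion argument (or an appeal to a known enumeration of near-maximally stack-sortable permutations) must be carried out carefully. Once both high-degree counts are pinned down as explicit factorial expressions, the residual work for $k=n-2$ is the clean central-binomial inequality above, while for $k=n-3$ it is a routine but heavier factorial comparison; it is the latter, rather than the former, where the real care is needed.
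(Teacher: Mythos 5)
Your overall strategy coincides with the paper's: dispose of $k=0,n-1$ by the trivial identifications, of $k=1,2$ by setting $x=1$ in Theorems~\ref{onestack} and~\ref{twostack}, and reduce $k=n-2,n-3$ to comparisons of explicit factorial expressions obtained from Theorems~\ref{rl:nm1} and~\ref{rl:nm2} at $x=1$ (your central-binomial reformulation $\binom{n-1}{\lfloor (n-1)/2\rfloor}>n-1$ for $k=n-2$ is the same inequality the paper checks as $\binom{2m}{1}\leq\binom{2m}{m}$ and $\binom{2m+1}{1}\leq\binom{2m+1}{m}$, and your values $A_m(1)=m!$, $D_m(1)=m!/2$ are correct). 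But there is a genuine gap, and you have in fact flagged it yourself without closing it: the stack-side counts. You write ``granting this'' for $|\{\deg_{\S}=n-1\}|=(n-2)!$ and never pin down $|\{\deg_{\S}=n-2\}|$ at all. The recursion $\deg_{\S}(LnR)=1+\deg_{\S}(\S(L)\S(R))$ is valid, but it reduces the problem to counting, for each maximal-degree $\sigma\in\s_{n-1}$, the pairs $(L,R)$ with $\S(L)\S(R)=\sigma$ --- that is, to enumerating $\S$-preimages, which is precisely the hard content of the enumeration and not a routine step. The paper sidesteps this entirely by citing West's thesis: $|\Stack{n}{n-2}|=n!-(n-2)!$ and $|\Stack{n}{n-3}|=\tfrac{(n-3)!}{2}\left(2n^3-6n^2-5n+16\right)$. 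Without either carrying out the preimage analysis or making that citation (which you mention only parenthetically as an alternative), the cases $k=n-2$ and $k=n-3$ are not proved.

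A second, smaller gap is the $k=n-3$ comparison itself. You call it a ``routine but heavier factorial comparison'' settled by ``a direct estimate together with a base case or two,'' but the paper needs two separate inductions according to the parity of $n$, each hinging on a nontrivial polynomial positivity fact ($216t^4+168t^3-292t^2-147t+55\geq 0$ and $54t^4+123t^3+32t^2-49t-10\geq 0$ for $t\geq 1$) to push the induction step through, together with the verification of strictness for $n>5$. This part is completable along the lines you indicate, so it is a matter of missing execution rather than a wrong idea; the essential missing ingredient remains the enumeration of the near-maximal-degree stack-sortable classes.
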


\begin{proof}
Since $\Stack{n}{0} = \Tack{n}{0} = \{\id\}$, the stated inequality is true for $k=0$.
Also, $\Stack{n}{n-1}=\Tack{n}{n-1} = \sn$, so the inequality is true for $k=n-1$.
The cases for $k=1$ and $k=2$ follow from Theorems~\ref{onestack} and \ref{twostack}, respectively, by setting $x=1$.
The outstanding cases are $k=n-3$ and $k=n-2$ which require more work.

First let us consider $k=n-2$.
West~\cite{west.phd} enumerated the set $\Stack{n}{n-2}$:
$$|\Stack{n}{n-2}|=n!-(n-2)!$$
If we set $x=1$ in Theorem~\ref{rl:nm1} then we find that 
$$|\Tack{n}{n-2}| = n!- \left\lfloor \frac{n-1}{2} \right\rfloor ! \left \lceil \frac{n-1}{2}\right \rceil !$$
The condition $|\Stack{n}{n-2}|\leq |\Tack{n}{n-2}|$ is equivalent to 
\begin{align}\label{labelme}
\left\lfloor \frac{n-1}{2} \right\rfloor ! \left \lceil \frac{n-1}{2}\right \rceil ! &\leq (n-2)!
\end{align}
\begin{itemize}
\item If $n=2m+1$ then this translates to $m! m! \leq (2m-1)!$, which is equivalent to $\binom{2m}{1} \leq \binom{2m}{m}$, and this is true for all $m\geq 1$. This is a strict inequality for $m\geq 2$.
\item If $n=2m+2$ then this translates to $m! (m+1)! \leq (2m)!$, which is equivalent to 
$\binom{2m+1}{1} \leq \binom{2m+1}{m}$, and this is true for all $m\geq 0$. This is a strict inequality for $m\geq 2$.
\end{itemize}
The inequality \ref{labelme} therefore holds true for all $n\geq 2$ and so the conjecture is true for $k=n-2$.
Furthermore, the inequality is strict when $n\geq 5$.

We now consider the final case $k=n-3$.
Using West~\cite[Ex. 4.2.15 and Thm 4.2.17]{west.phd} we have
$$|\Stack{n}{n-3}| = \dfrac{(n-3)!}{2} \left( 2n^3-6n^2-5n+16 \right).$$
To get an expression for $|\Tack{n}{n-3}|$, set $x=1$ in Theorem~\ref{rl:nm2} to get
\begin{align}
|\Tack{n}{n-3}| =& n! - \floor{n-1}{2}! \ceil{n-1}{2}!  - \floor{n+2}{2} \floor{n-1}{2}! \floor{n}{2}!\nonumber \\
	&- \floor{n-1}{2} \floor{n-2}{2}! \floor{n+1}{2}!
	+\tfrac{1}{2} \floor{n-1}{2}! \floor{n}{2}!
	-\tfrac{1}{2} \floor{n-2}{2}! \floor{n+1}{2}!\label{rara}
\end{align}
\begin{itemize}
\item If $n=2m+1$ then $|\Tack{n}{n-3}| = (2m+1)! - \tfrac{4m^2+6m+1}{2} (m-1)!m!$ and $|\Stack{n}{n-3}|$ $=$ $\tfrac{(2m-2)!}{2} \left(16m^3-22m+7\right)$.
We find that $|\Stack{n}{n-3}| \leq |\Tack{n}{n-3}|$ iff
\begin{align}\label{ineqfirst}
\binom{2m-2}{m-1} &\geq \frac{4m^3+6m^2+m}{18m-7}.
\end{align}
This inequality is true for $m=0$. Suppose it to be true for $m=t$. Then
\begin{align*}
\binom{2(t+1)-2}{(t+1)-1} &= \frac{2(2t-1)}{t} \binom{2t-2}{t-1}  \geq \frac{2(2t-1)}{t} \frac{4t^3+6t^2+t}{18t-7},
\end{align*}
by the induction hypothesis. The value on the right hand side satisfies 
the inequality
\begin{align*}
\dfrac{2(2t-1)(4t^2+6t+1)}{18t-7} & \geq \dfrac{4(t+1)^3+6(t+1)^2+(t+1)}{18(t+1)-7},
\end{align*}
this being due to the fact that 
$216t^4 + 168t^3 - 292t^2 - 147t + 55\geq 0$ for all $t\geq 1$. (The equation $216t^4 + 168t^3 - 292t^2 - 147t + 55=0$ has only real roots, of which $t=1$ is the largest.)
Combining both of these inequalities shows that inequality \ref{ineqfirst} is true for $m=t+1$. By the principle of induction the inequality \ref{ineqfirst} is therefore true for all $m\geq 0$.
\item If $n=2m+2$ then $|\Tack{n}{n-3}| = (2m+2)! - (2m+3) m!(m+1)!$ and $|\Stack{n}{n-3}|=\tfrac{(2m-1)!}{2}\left(16m^3+24m^2-10m-2\right)$.
We find that $|\Stack{n}{n-3}| \leq |\Tack{n}{n-3}|$ iff
\begin{align}\label{ineqsecond}
\dfrac{(2m-1)!}{m!(m+1)!} &\geq \frac{2m+3}{9m+1}.
\end{align}
This inequality is true for $m=1$. Suppose it to be true for $m=t$.
Then 
\begin{align*}
\dfrac{(2(t+1)-1)!}{(t+1)!(t+2)!} &= \dfrac{2t(2t+1)}{(t+1)(t+2)} \dfrac{(2t-1)!}{t!(t+1)!} \geq \dfrac{2t(2t+1)}{(t+1)(t+2)} \dfrac{2t+3}{9t+1},
\end{align*}
by the induction hypothesis. 
By noticing that $54t^4 + 123t^3 + 32t^2 - 49t - 10 \geq 0$ for all $t\geq 1$ (the largest root of $54t^4 + 123t^3 + 32t^2 - 49t - 10=0$ is between 0.5 and 0.6),
the value on the right hand side satisfies the inequality
\begin{align*}
\dfrac{2t(2t+1)(2t+3)}{(t+1)(t+2)(9t+1)} & \geq \dfrac{2(t+1)+3}{9(t+1)+1}.
\end{align*}
Combining both of these inequalities we find that
\begin{align}
\dfrac{(2(t+1)-1)!}{(t+1)!(t+2)!} &\geq \frac{2(t+1)+3}{9(t+1)+1}
\end{align}
and inequality \ref{ineqsecond} is true for $m=t+1$.
By the principle of induction the inequality \ref{ineqsecond} is therefore true for all $m\geq 1$.
\end{itemize}
In both cases the inequality is strict for $n> 5$.\qedhere
\end{proof}
Equation \ref{rara} admits a reduction to the more compact form:
\begin{align}
|\Tack{n}{n-3}| &= n! - \floor{n-1}{2}! \ceil{n-1}{2}!  - \tfrac{n^2}{2} \floor{n-1}{2}! \floor{n-2}{2}!
\end{align}

\section{The descent polynomial of $t$-revstack sortable permutations}
Let $V_t(n,i)$ and $W_t(n,i)$ be the sets of permutations in $\sn$ having $i$ descents which are $t$-\revstack\ sortable and $t$-stack sortable, respectively.
Let us write $v_t(n,i)=|V_t(n,i)|$ and $w_t(n,i)=|W_t(n,i)|$ so that
\begin{align*}
W(\Tack{n}{t};x) &= \sum_{i} v_t(n,i) x^{i+1} \\
W(\Stack{n}{t};x) &= \sum_{i} w_t(n,i) x^{i+1}.
\end{align*}
B\'ona\cite{bona.1,bona.2} 
proved symmetry and unimodality of the numbers $w_t(n,i)$ with respect to the descent parameter, i.e.
\begin{align*}
w_t(n,i) &= w_t(n,n-1-i) \mbox{ for all }0\leq i \leq n-1 \mbox{, and}\\
w_t(n,0) &\leq w_t(n,1) \leq \ldots \leq w_t(n,\lfloor (n-1)/2\rfloor) \geq \ldots \geq w_t(n,n-1).
\end{align*}
In this section we will do the same for the numbers $v_t(n,i)$.
Symmetry is proven by showing that the reflection operation on permutations commutes with B\'ona's~\cite{bona.1} 
{\it{duality map}}.
Unimodality is proven by using the same argument of B\'ona~\cite{bona.1,bona.2} and showing that his function $z:W_t(n,i) \to W_t(n,i+1)$ 
is injective for all $0\leq i \leq \floor{n-3}{2}$ and preserves $t$-\revstack\ sortability.

\begin{theorem}
$v_t(n,i) = v_{t}(n,n-1-i)$.
\end{theorem}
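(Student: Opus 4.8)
The plan is to produce a single descent-reversing bijection on the whole class $\Tack{n}{t}$, in direct analogy with B\'ona's treatment of $\Stack{n}{t}$. First I would record the elementary fact that $\des(\rev(\pi)) = n-1-\des(\pi)$, so that reversal is the natural candidate for reversing the descent statistic; the obstruction is that $\rev$ does not preserve $\Tack{n}{t}$ once $t\geq 2$ (indeed, already at $t=1$, where $\Tack{n}{1}=\sn(132)$, the only nontrivial elementary symmetry fixing the class is the inverse, and that does not reverse descents). Hence the symmetry must be genuinely combinatorial, exactly as in the $t$-stack-sortable case.

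The core of the plan is to borrow B\'ona's duality map $\mathcal{D}$, which for each fixed $t$ is a descent-reversing bijection of $\Stack{n}{t}$ onto itself and which establishes $w_t(n,i)=w_t(n,n-1-i)$. Because revstack sorting factors as $\T=\S\circ\rev$, I would try to transport $\mathcal{D}$ to the revstack world through the reflection. Concretely, the key lemma I would prove is that the reflection commutes with the duality map, $\rev\circ\mathcal{D}=\mathcal{D}\circ\rev$; this I would verify by induction on $n$, unwinding the recursive definition of $\mathcal{D}$ against the recursion $\rev(LnR)=\rev(R)\,n\,\rev(L)$. Granting this commutation, I would show that $\mathcal{D}$ carries $\Tack{n}{t}$ bijectively onto itself: membership in $\Tack{n}{t}$ is phrased through $\S$ and $\rev$, B\'ona's map already respects the $\S$-sortable filtration, and the commutation with $\rev$ is what lets that compatibility propagate to the $\T$-sortable filtration. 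Since $\mathcal{D}$ reverses the number of descents, it then restricts to a bijection $V_t(n,i)\to V_t(n,n-1-i)$.

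Taking cardinalities gives $v_t(n,i)=|V_t(n,i)|=|V_t(n,n-1-i)|=v_t(n,n-1-i)$, which is the claim. The descent bookkeeping is routine once membership is settled, so I expect the main obstacle to be precisely the preservation statement: showing that a map originally engineered as a symmetry of the $\S$-sortable classes also preserves the $\T$-sortable classes. This is where the commutation $\rev\circ\mathcal{D}=\mathcal{D}\circ\rev$ must be leveraged against the recursive structure of revstack sortability, and getting that interaction to go through cleanly is the delicate step.
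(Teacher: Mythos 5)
Your proposal matches the paper's proof essentially step for step: both use B\'ona's duality map $f$ (your $\mathcal{D}$), with its properties $\S(\pi)=\S(f(\pi))$ and $\des(\pi)+\des(f(\pi))=n-1$, and both hinge on the same key lemma $\rev\circ f=f\circ\rev$, proved by induction on word length, which yields $\T(f(\pi))=\S(\rev(f(\pi)))=\S(f(\rev(\pi)))=\S(\rev(\pi))=\T(\pi)$ and hence preservation of $\Tack{n}{t}$. The only difference is one of emphasis: the step you flag as delicate is, once the commutation lemma is in hand, exactly this short chain of equalities in the paper.
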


\begin{proof}
B\'ona's duality map~\cite[Section 2]{bona.1} $f:\sn \to \sn$ is defined in the following recursive way. 
Note that permutations are written as words, and the definition is necessarily a definition on words.
The value $n$ stands for the unique largest value in a word.
\begin{itemize}
\item $f(\epsilon) = \epsilon$
\item $f(x)=x$ where $x$ is a word of length 1
\item $f(LnR)=f(L)nf(R)$ if neither $L$ nor $R$ is empty
\item $f(Ln)=nf(L)$
\item $f(nR)=f(R)n$.
\end{itemize}
Let us recall B\'ona's method of proof of symmetry. 
He showed~\cite[Lemma 2.6]{bona.1} that $f$ preserves the $t$-stack sortable property: 
\begin{align}\label{bonapreserve}
\S(\pi)&=\S(f(\pi))
\end{align}
from which he concludes that $\pi$ is $t$-stack sortable iff $f(\pi)$ 
is $t$-stack sortable. 
He also showed~\cite[Proposition 2.5]{bona.1} that 
\begin{align}\label{bonades}
\des(\pi) + \des(f(\pi)) &= n-1
\end{align}
for all $\pi \in \sn$.
Combining these two observations: applying $f$ to permutations in $\sn$ that are $t$-stack sortable gives another permutation that is $t$-stack sortable, but which has $n-1-\des(\pi)$ descents, i.e. there is symmetry in the descent statistic.

The reverse operation $\rev:\sn \to \sn$ is defined recursively via
\begin{itemize}
\item $\rev(\epsilon) = \epsilon$
\item $\rev(LnR) = \rev(R)n \rev(L)$ where $n$ is the largest value in the word $LnR$.
\end{itemize}
It is easy (by induction on the length of the word) to see that $f \circ \rev = \rev \circ f = g$ where $g$ is defined as follows:
\begin{itemize}
\item $g(\epsilon)=\epsilon$
\item $g(x)=x$ where $x$ is a word of length 1
\item $g(LnR)=g(R)ng(L)$ if neither $L$ nor $R$ is empty
\item $g(Ln)=g(L)n$
\item $g(nR)=ng(R)$.
\end{itemize}
The operator $f$ preserves the $t$-\revstack\ sortable property since
\begin{align*}
\T(\pi) &= \S(\rev(\pi))\\
&= \S(f(\rev(\pi)))\\
\noalign{by applying B\'ona's equation \ref{bonapreserve} above,}
&= \S(\rev(f(\pi)))\\
\noalign{since $f$ commutes with $\rev$,}
&= \T(f(\pi)).
\end{align*}
This fact, in conjunction with equation~\ref{bonades}, gives the stated result.
\end{proof}

\begin{theorem}
The sequence of numbers $(v_t(n,0),v_t(n,1),\ldots,v_t(n,n-1))$ is unimodal.
\end{theorem}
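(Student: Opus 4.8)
The plan is to combine the symmetry just established with an injection argument in the style of Bóna.

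First I would reduce the claim to a one-sided statement. Since the previous theorem gives $v_t(n,i)=v_t(n,n-1-i)$, the sequence is symmetric about its centre, and a finite symmetric sequence is unimodal exactly when its first half is non-decreasing. Hence it suffices to prove
\[
v_t(n,i) \;\le\; v_t(n,i+1) \qquad \text{for all } 0 \le i \le \floor{n-3}{2},
\]
because these inequalities together with symmetry force $v_t(n,0)\le\cdots\le v_t(n,\floor{n-1}{2})\ge\cdots\ge v_t(n,n-1)$, which is the unimodal shape with peak at the central index.

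To prove each such inequality I would exhibit an injection from the $t$-\revstack\ sortable permutations with $i$ descents into those with $i+1$ descents. Bóna's map $z:\sn\to\sn$ is constructed for precisely this purpose in the stack-sortable setting, and it has three properties: (a) $\des(z(\pi))=\des(\pi)+1$; (b) $z$ is injective on permutations with at most $\floor{n-3}{2}$ descents; and (c) $z$ preserves the $t$-stack sortable property. Properties (a) and (b) are formulated with no reference to a sorting operator, so they transfer verbatim: $z$ sends a permutation counted by $v_t(n,i)$ to one with $i+1$ descents, and it remains injective on the range of interest. Thus $z$ will restrict to an injection $V_t(n,i)\hookrightarrow V_t(n,i+1)$ as soon as we establish the \revstack\ analogue of (c).

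That analogue, namely $\T^t(\pi)=\id \Rightarrow \T^t(z(\pi))=\id$, is the one thing that must be re-proven, and I expect it to be the main obstacle. It cannot be imported from the stack-sortable case, because the descent polynomials of $\Tack{n}{t}$ and $\Stack{n}{t}$ differ once $t\ge 3$, so no descent-preserving bijection between the two classes exists; the preservation statement is genuinely operator-specific. Instead I would re-run Bóna's preservation argument with $\S$ replaced by $\T$: using the description of $\T$ as the result of feeding a permutation backwards through a stack, together with Lemmas~\ref{LEMMA:1}--\ref{LEMMA:3}, I would track the single local modification defining $z$ through one application of $\T$ and show that it induces a corresponding local modification of $\T(\pi)$, yielding a covariance relation of the form $\T(z(\pi))=z'(\T(\pi))$ from which $\T^t(z(\pi))=\id$ follows by induction on $t$. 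The commutation $f\circ\rev=\rev\circ f$ and the identity $\T(\pi)=\T(f(\pi))$ used in the symmetry proof are strong evidence that the same local analysis carries over; the real work is verifying that the particular move underlying $z$ is compatible with the reverse-then-stack dynamics of $\T$ rather than with the plain stack dynamics of $\S$.
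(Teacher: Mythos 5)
Your reduction via symmetry and your identification of the three properties needed of B\'ona's injection are exactly right, and they match the paper's framework. But the proof has a genuine gap at precisely the step you yourself flag as ``the real work'': you never actually establish that the injection preserves $t$-\revstack\ sortability, and the mechanism you propose for doing so---a covariance relation $\T(z(\pi))=z'(\T(\pi))$ for some auxiliary descent-increasing map $z'$, followed by induction on $t$---is aimed at the wrong statement. Note that such a covariance would in fact be an obstacle rather than a help: if each pass of $\T$ transported the local modification forward via a nontrivial $z'$, then $\T^t(z(\pi))$ would carry a residual defect at every stage and you could not conclude $\T^t(z(\pi))=\id$ from $\T^t(\pi)=\id$. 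What is true, and what the paper proves, is the much stronger one-pass identity $\T(h(\pi))=\T(\pi)$, after which $\T^t(h(\pi))=\id$ is immediate with no induction at all.

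The missing idea is a traversal description of $\T$. Just as $\S(\pi)$ is the postorder (left subtree, right subtree, root) reading of the decreasing binary tree $\mathsf{Tree}(\pi)$, the paper observes that $\T(\pi)$ is the \emph{reverse} postorder reading: right subtree, then left subtree, then root. B\'ona's tree-level map $h'$ only reorients subtrees that hang off \emph{only-children} of nodes, and any traversal of the form ``child subtrees, then root'' reads a node with a single child identically whether that child is attached on the left or on the right. Hence $h'$ is invisible to the reverse-postorder reading just as it is to the postorder reading, giving $\T(h(\pi))=\T(\pi)$ in two lines. So your instinct that the preservation statement is operator-specific and must be re-proven is correct, but your claim that it ``cannot be imported'' from the stack case is overstated: once $\T$ is expressed as a tree traversal, B\'ona's argument imports essentially verbatim, and no tracking of the modification through the stack dynamics via Lemmas~\ref{LEMMA:1}--\ref{LEMMA:3} is needed.
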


\newcommand\Tree{\mathsf{Tree}}
\begin{proof}
This proof is a slight modification of B\'ona's unimodality proof of the number sequence $(w_t(n,i))_{i=0}^{n-1}$ (see \cite[Section 3]{bona.1} and \cite{bona.2}).
Let $\sn^{\des=i}$ be the set of permutations in $\sn$ that have $i$ descents.
Let $T(n,i)$ be the set of decreasing binary trees on $n$ vertices whose label set is $\{1,\ldots,n\}$ and which have $i$ right edges.

The sets $\sn^{\des=i}$ and $T(n,i)$ are in one-to-one correspondence, and we write $\Tree(\pi)$ for the tree in $T(n,i)$ that corresponds to $\pi \in \sn^{\des=i}$ using the following rule:
{{$x$ is a left (resp. right) child of $y$ in $\Tree(\pi)$ iff $x$ is the largest value to the left (resp. right) of $y$ in $\pi$ and which is less than it.}}
(One easily recovers the permutation $\pi$ from $\Tree(\pi)$ by reading its labels using in-order traversal, and $\des(\pi)$ equals the number of right edges in $\Tree(\pi)$.)

B\'ona proved unimodality by exhibiting an function $h':T(n,i) \to T(n,i+1)$ that is injective for 
all $i\leq \floor{n-3}{2}$ and preserves the $t$-stack sortable property.
Due to the bijective correspondence between $\sn^{\des=i}$ and $T(n,i)$ (outlined in the previous paragraph), 
the injective function $h'$ is equivalent to a function 
$h: W_t(n,i) \to W_t(n,i+1)$ that is injective for all $0\leq i \leq \floor{n-3}{2}$ and for all $t$.

Let us describe the function $h$ by way of an example, and explain why it also preserves the property of $t$-revstack-sortability.
The function $h$ operates on a permutation $\pi$ by looking at its tree $\Tree(\pi)$.
Suppose $\pi = (8,7,9,4,6,1,10,2,3,5,11)$. Then $\Tree(\pi)$ is given in Figure~\ref{d:One}.

\begin{figure}[h!]
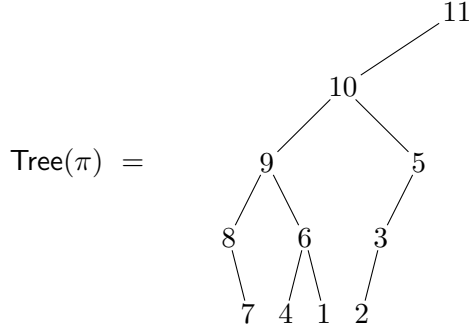

\begin{center}
{\diagramOne}
\end{center}
\caption{The decreasing binary tree corresponding to $\pi=(8,7,9,4,6,1,10,2,3,5,11).$\label{d:One}}
\end{figure}

Index the vertices of the tree from bottom to top, and within every level from left to right. 
This starts by labeling the bottom-most vertex on the left hand side $v_1$. 
If there is a vertex on the same level and to its right then label it vertex $v_2$, 
but otherwise go up one level and label the leftmost vertex $v_2$, and so forth. 
The labeling of $\Tree(\pi)$ in Figure~\ref{d:One} is given in Figure~\ref{d:Two}.

\begin{figure}[h!]
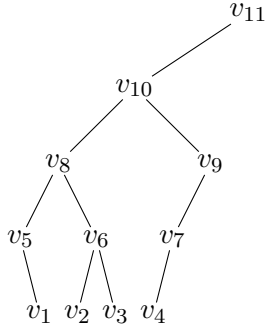

\begin{center}
{\diagramTwo}
\end{center}
\caption{The labeling of the tree from Figure~\ref{d:One}.\label{d:Two}}
\end{figure}

Let $T_i$ be the subgraph of $\Tree(\pi)$ when restricted to the vertices $\{v_1,\ldots,v_i\}$.
$T_7$ is shown in Figure~\ref{d:Three}.

\begin{figure}[h!]
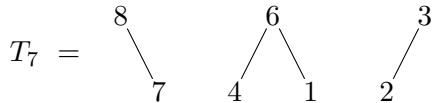

\begin{center}
{\diagramThree}
\end{center}
\caption{$T_7$, the restriction of $\Tree(\pi)$ to the vertices $\{v_1,\ldots,v_7\}$.\label{d:Three}}
\end{figure}

Determine the minimum value of $i$ such that the number of left edges in $T_i$ is precisely one more than the number of right edges in $T_i$. 
(B\'{o}na proved the existence of such an $i$ using a continuity argument~\cite{bona.2}.)
In our example this minimum value is $i=9$.

Next we look at the vertices in $\Tree(\pi)$ that are both part of $T_i$ and have only either a left or a right child (but not both).
If such a vertex has a left child, then move it to be a right child instead. If such a vertex has a right child, then move it to be a left child instead.
Let the resulting tree be $h'(\Tree(\pi))$.

For example, the nodes in $\Tree(\pi)$ that are part of $T_9$ and which have only left or right children (but not both) are 
$v_5$, $v_7$ and $v_9$. Consequently $h'(\Tree(\pi))$ is illustrated in Figure~\ref{d:Four}.

\begin{figure}[h!]
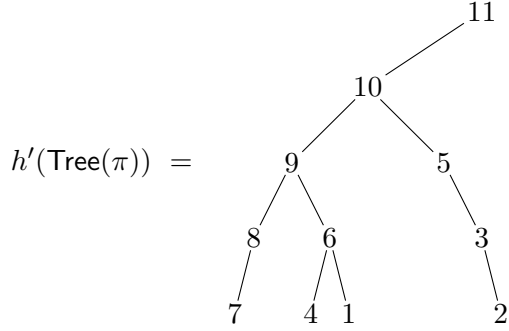

\begin{center}
{\diagramFour}
\end{center}
\caption{The outcome of applying $h'$ to $\Tree(\pi)$.\label{d:Four}}
\end{figure}

The tree $h'(\Tree(\pi))$ has exactly one more right edge than $\Tree(\pi)$.
This map $h':T(n,i) \to T(n,i+1)$ was proven to be injective for $i \leq \floor{n-3}{2}$ (see B\'ona\cite{bona.2}).
The permutation $h(\pi)$ is the unique permutation corresponding to $h'(\Tree(\pi))$. For our example we have
$$h(\pi) = (7,8,9,4,6,1,10,5,3,2,11).$$
To summarise: the function $h:\sn^{\des=i} \to \sn^{\des=i+1}$ is injective for all $0\leq i \leq \floor{n-3}{2}$. 

B\'ona then showed that $h$ preserves the $t$-stack sortable property: $\S(\pi) = \S(h(\pi))$.
This was done by noting that $\S(\pi)$ is given by reading the tree $\Tree(\pi)$ using post-order traversal.
The post-order reading of the tree is the same even after the application of the operator $h'$ on $\Tree(\pi)$ because $h'$ only changes the direction of subtrees that are the only-children of nodes, and therefore preserves the post-order reading.

The same argument holds in our case, and all we need to show in order to prove the theorem is that $h$ preserves the $t$-\revstack\ sortable property: $\T(\pi)=\T(h(\pi))$. 
To do this, let us define the {\it{rpostorder}} traversal of a tree as the list one obtains by first listing the rpostorder of its right subtree, then the rpostorder of the left subtree, and then finally listing the label of the root node. 
The rpostorder of the tree $\Tree(\pi)$ is then $\T(\pi)$.
Since $h'$ only changes the orientation of subtrees that are only-children of nodes, the rpostorder reading of $h'(\Tree(\pi))$ is the same as the rpostorder reading of $\Tree(\pi)$.
\end{proof}

The coefficients of the two smallest and two largest powers of $x$ in $W(\Tack{n}{t};x)$ and $W(\Stack{n}{t};x)$ are always the same. This follows from symmetry and using the following results. Note that the coefficient of $x$ in both is always unity because $\Tack{n}{0}=\Stack{n}{0} = \{\id\}$ which means $v_t(n,0)=w_t(n,0)=1$.

\begin{proposition}
$v_t(n,1) = w_t(n,1)$.
\end{proposition}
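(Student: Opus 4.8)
The plan is to show that the two statistics $\deg_{\S}$ and $\deg_{\T}$ are equidistributed over the set $\sn^{\des=1}$ of one-descent permutations; since $v_t(n,1)$ (resp.\ $w_t(n,1)$) counts the one-descent $\pi$ with $\deg_{\T}(\pi)\le t$ (resp.\ $\deg_{\S}(\pi)\le t$), equidistribution yields the claim. First I would record a convenient encoding: a one-descent permutation consists of two increasing runs, a first run whose value set I call $A$ followed by $B=\{1,\dots,n\}\setminus A$, subject to $\max A>\min B$ (equivalently, $A$ is not an initial segment). Writing $m=\min B$, note that $\{1,\dots,m-1\}\subseteq A$ and $m\in B$, so the data is equivalently the binary word recording, for $v=n,n-1,\dots,1$, whether $v\in A$ or $v\in B$.

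Next I would pin down each degree. For $\S$, running one pass of the stack on the word ``$A$ then $B$'' shows that $\S(\pi)$ is again either the identity or a one-descent permutation, obtained by deleting $\max A$ from the first run and inserting it in sorted position into the second run, with $\min B$ left unchanged. Iterating, $\S$ strips off the elements of $A$ exceeding $\min B$ one at a time, so $\deg_{\S}(\pi)=|\{a\in A:a>\min B\}|$. For $\T$ I would use the zigzag machinery: taking $z_0=\max A$ and then greedily choosing the largest remaining element of $B$ below it, then the largest element of $A$ below that, and alternating, produces a maximal zigzag whose length equals the number of type-changes in the word $(\sigma_{\max A},\dots,\sigma_1)$. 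One checks this zigzag is uninterrupted (any $c>\max A$ lies in $B$, hence to the right of every first-run entry and above every chosen zigzag entry, so neither interruption condition can be met), so Theorems~\ref{thm:nozigzag} and \ref{thm:zigzag} give that $\deg_{\T}(\pi)$ equals this number of type-changes.

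Finally I would count. For fixed $j\ge 1$, the number of one-descent permutations with $\deg_{\S}=j$ is $\sum_{m}\binom{n-m}{j}=\binom{n}{j+1}$ by the hockey-stick identity, and likewise the number with exactly $j$ type-changes (hence $\deg_{\T}=j$) is $\sum_{M}\binom{M-1}{j}=\binom{n}{j+1}$. Thus both statistics have the common distribution $\binom{n}{j+1}$ on $\sn^{\des=1}$, whence $v_t(n,1)=\sum_{j=1}^{t}\binom{n}{j+1}=w_t(n,1)$.

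The main obstacle is the evaluation of $\deg_{\T}$: unlike $\deg_{\S}$, a single pass of $\T$ does not simply relocate $\max A$, so a clean recursive description is less transparent, and the real work lies in identifying the correct maximal zigzag and verifying that it is uninterrupted so that the two classification theorems apply. Once both statistics are expressed through the same binary word, the matching of distributions is a routine hockey-stick computation.
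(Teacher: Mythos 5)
Your proposal is correct, and it takes a genuinely different route from the paper --- in fact a stronger one. The paper's proof is a short reversal argument: unfolding $\T=\S\circ\rev$ and using $\des(\pi)+\des(\rev(\pi))=n-1$ together with the descent symmetry of stack-sortable permutations, it matches the one-descent permutations with $\T(\pi)=\id$ against those with $\S(\pi)=\id$; as printed this handles the single-pass case, and it does not extend mechanically to larger $t$, since $\T^t=(\S\circ\rev)^t$ is not $\S^t\circ\rev$. You instead compute the full distribution of both degree statistics on $\sn^{\des=1}$. Your evaluation $\deg_{\S}(\pi)=|\{a\in A:\ a>\min B\}|$ is right: one pass relocates $\max A$ into the second run, leaves $\min B$ fixed, and produces again a permutation with at most one descent. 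On the $\T$ side your argument is also sound, though two points deserve to be made explicit: first, the alternation of zigzag entries between the runs is \emph{forced} (any $z_0$ must lie in the first run, any $z_1$ in the second, and so on), so the greedy zigzag headed by $\max A$, which takes one value per maximal block of your binary word, really is a longest zigzag --- this maximality is what licenses the upper bound from Theorem~\ref{thm:nozigzag}, while your verification that such a zigzag is automatically uninterrupted (every $c>\max A$ lies in $B$, to the right of all even-indexed entries and, being larger, to the right of all odd-indexed entries in the increasing second run) gives the matching lower bound from Theorem~\ref{thm:zigzag}; second, there is an off-by-one to keep straight, namely a $k$-zigzag has $k+2$ entries and $k+1$ type-changes, so $\deg_{\T}(\pi)=b-1$ where $b$ is the number of blocks from $\max A$ down, which is indeed your number of type-changes. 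With that, both statistics have common distribution $\binom{n}{j+1}$ on degree $j\geq 1$ (the hockey-stick sums $\sum_m\binom{n-m}{j}$ and $\sum_M\binom{M-1}{j}$ are correct), yielding $v_t(n,1)=w_t(n,1)=\sum_{j=1}^{t}\binom{n}{j+1}$ for \emph{every} $t$ simultaneously. What your approach buys is precisely this uniformity plus an explicit closed form for the common distribution, a refinement the paper's slicker set-level bijection does not provide; what it costs is the zigzag bookkeeping, which the paper's proof avoids entirely.
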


\begin{proof}
The set $\{\pi \in \sn: \T(\pi)=\id \mbox{ and } \des(\pi)=1\} = \{\pi\in \sn: \S(\rev(\pi))=\id \mbox{ and } \des(\pi)=1\}$. 
Since $\des(\rev(\pi))+\des(\pi)=n-1$ we have 
$|\{\pi\in \sn: \S(\rev(\pi))=\id \mbox{ and } \des(\pi)=1\}|=|\{\pi\in \sn: \S(\pi)=\id \mbox{ and } \des(\rev(\pi))=n-2\}|$.
This last set is the same as $\{\pi\in \sn: S(\pi)=\id \mbox{ and } \des(\pi)=1\}$  since $\des(\pi)=n-1-\des(\rev(\pi))$, hence the result.
\end{proof}

Proving real-rootedness and log-concavity of the coefficient sequences of the descent polynomials for $t$-stack sortable permutations remains an unsolved problem.
We present companion conjectures for revstack sort that have been verified
for all $0\leq t <n\leq 10$. All such polynomials, along with their roots, are listed in the appendix.
\begin{conjecture}  \label{realrooted}
For all $n\geq 1$, the polynomial $W(\Tack{n}{t};x)$ has only real non-positive roots.
\end{conjecture}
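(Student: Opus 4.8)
The plan is to work inside the Laguerre--P\'olya class $\mathcal{LP}^+$ of real polynomials that have nonnegative coefficients and only real zeros; since every $W(\Tack{n}{t};x)$ is a genuine counting generating function its coefficients are nonnegative, so membership in $\mathcal{LP}^+$ is exactly the assertion of the conjecture (real zeros together with nonnegative coefficients force the zeros to be nonpositive). Because each such polynomial has lowest term $x^{1}$, I would first factor out this common $x$ and write $W(\Tack{n}{t};x)=x\,B_{n,t}(x)$ with $B_{n,t}$ of degree $n-1$, reducing the problem to showing $B_{n,t}\in\mathcal{LP}^+$. The class $\mathcal{LP}^+$ is closed under products, so any factor of the form $A_a(x)A_b(x)$ appearing in the formulas of Section~3 is harmless; all of the difficulty lives in the \emph{subtractions} in Theorems~\ref{rl:nm1} and \ref{rl:nm2} and in the complete absence of a closed form for generic $t$.

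First I would settle the boundary values. For $t=n-1$ the polynomial is the Eulerian polynomial $A_n(x)$, which lies in $\mathcal{LP}^+$ by the classical theorem of Frobenius. For $t=1$, Theorem~\ref{onestack} identifies $W(\Tack{n}{1};x)$ with the Narayana polynomial $\sum_k N(n,k)x^{k+1}$, whose real-rootedness is classical (it is, up to normalisation, a Jacobi polynomial and the $h$-polynomial of the associahedron). For $t=2$, Theorem~\ref{twostack} gives $W(\Tack{n}{2};x)=W(\Stack{n}{2};x)$, so the claim is inherited from the stack-sorting side; here I would feed the explicit formula of Dulucq et al.~\cite{dulucq} into the criteria below and verify real-rootedness directly.

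The genuine algebra is the pair $t=n-2$ and $t=n-3$. After factoring out $x$ (so that $A_m=xB_m$ with $B_m$ the degree-$(m-1)$ Eulerian polynomial), the formula of Theorem~\ref{rl:nm1} becomes a difference $B_n(x)-x\,B_{\lceil(n-1)/2\rceil}(x)B_{\lfloor(n-1)/2\rfloor}(x)$ of one member of $\mathcal{LP}^+$ and a scaled product of two such members. The tool I would use is Obreschkoff's theorem: for real $f,g$, the entire pencil $\alpha f+\beta g$ (over all real $\alpha,\beta$) consists of real-rooted polynomials precisely when $f$ and $g$ have interlacing zeros; thus a \emph{difference} of two polynomials is real-rooted as soon as they interlace. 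The program, therefore, is to build a short toolbox of interlacing lemmas for Eulerian polynomials --- that $B_m$ and $B_{m+1}$ interlace (the engine of the usual proof that $B_m\in\mathcal{LP}^+$), that the binomial convolution appearing as $D_n(x)$ in equation~\ref{didentity} again lies in $\mathcal{LP}^+$, and that in each difference the subtracted polynomial interlaces the one it is subtracted from --- and then to read off real-rootedness of $W(\Tack{n}{n-2};x)$ and, after the same grouping of the six terms, of $W(\Tack{n}{n-3};x)$. I expect the technical heart of these two cases to be verifying the required interlacings: the subtracted products carry a double zero at the origin, so the interlacing must be checked on the honest degree-reduced factors rather than on the raw polynomials, and for the longer $t=n-3$ formula the six pieces must be grouped so that each surviving difference is governed by a single interlacing pair.

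The main obstacle is generic $t$. There Section~5 gives only symmetry, $v_t(n,i)=v_t(n,n-1-i)$, and unimodality, and both are strictly weaker than real-rootedness, so they cannot close the conjecture. The natural route is to exploit the palindromicity: write $B_{n,t}(x)=\sum_k\gamma_k\,x^k(1+x)^{\,n-1-2k}$, pass to the $\gamma$-polynomial $\Gamma(y)=\sum_k\gamma_k\,y^k$, and use the standard fact that $B_{n,t}\in\mathcal{LP}^+$ is equivalent to $\Gamma$ having only real zeros lying in $[0,\tfrac14]$. This reformulates, but does not remove, the difficulty. What is genuinely missing --- and the reason the statement is only a conjecture --- is a recursion compatible with the rule $\pi=LnR\mapsto\T(R)\T(L)n$ that simultaneously tracks the descent statistic and $t$-revstack sortability while preserving interlacing as $n$ grows; equivalently, a combinatorial model that manifests the $\gamma$-nonnegativity. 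Producing such a sortability- and real-rootedness-preserving recursion is where essentially all of the difficulty concentrates, and I would regard it as the crux of any complete proof.
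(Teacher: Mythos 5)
The first thing to say is that the paper contains no proof of this statement: it is Conjecture~\ref{realrooted}, supported only by exhaustive computation for all $0\leq t<n\leq 10$ (the Appendix lists each polynomial $W(\Tack{n}{t};x)$ together with its numerically computed roots), and the paper explicitly remarks that the companion problem for $t$-stack sortable permutations ``remains an unsolved problem.'' So your closing diagnosis is exactly right, and it matches the paper's own position: Section~5 proves only symmetry and unimodality of the coefficients $v_t(n,i)$, which, as you correctly note, are strictly weaker than real-rootedness, and no recursion or $\gamma$-positivity model compatible with $\T(LnR)=\T(R)\T(L)n$ is available for generic $t$. Your proposal is honestly framed as a program rather than a proof, and in that respect there is no gap between you and the paper.

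That said, two steps inside your program are weaker than you present them. First, the case $t=2$ is not inherited from known results: Theorem~\ref{twostack} gives $W(\Tack{n}{2};x)=W(\Stack{n}{2};x)$, but real-rootedness of $W(\Stack{n}{2};x)$ is itself part of the open problem the paper cites, so ``feed the formula of Dulucq et al.\ into the criteria and verify directly'' is a task of the same difficulty as the conjecture, not a reduction of it. Second, and more concretely, the Obreschkoff interlacing engine provably cannot work as stated for $t=n-2$ when $n$ is odd. In that case Theorem~\ref{rl:nm1} gives
\begin{align*}
W(\Tack{n}{n-2};x) &= A_n(x)-A_{(n-1)/2}(x)^2,
\end{align*}
and the subtrahend has every nonzero root doubled; a polynomial with a double root at $r$ can (even weakly) interlace $A_n$ only if $A_n(r)=0$, which fails --- for $n=5$ the point $-1$ is a double root of $A_2(x)^2=x^2(1+x)^2$ while $A_5(-1)=-1+26-66+26-1=-16\neq 0$. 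Yet the Appendix shows $W(\Tack{5}{3};x)=x^5+25x^4+64x^3+25x^2+x$ \emph{is} real-rooted, so the difference is real-rooted for reasons that interlacing of that particular pair cannot certify; since for odd $n$ the formula is a single difference, no regrouping of terms rescues the argument, and you would need a different certificate (a common interleaver, a direct sign-change count, or a wholly different criterion). None of this contradicts your final verdict, which agrees with the paper's: the statement is open.
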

\begin{conjecture}  \label{logconcave}
For all $n\geq 1$, the sequence of coefficients of the polynomial\\ $W(\Tack{n}{t};x)$ is log-concave.
\end{conjecture}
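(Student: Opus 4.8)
The plan is to deduce log-concavity from the stronger real-rootedness asserted in Conjecture~\ref{realrooted}. The coefficients $v_t(n,i)$ of $W(\Tack{n}{t};x)$ are nonnegative, since they count permutations, so if the polynomial has only real roots then Newton's inequalities apply: writing $W(\Tack{n}{t};x)=\sum_k a_k x^k$ with all roots real and $a_k\ge 0$, one has
$$a_k^2 \ \ge\ a_{k-1}a_{k+1}\left(1+\tfrac1k\right)\left(1+\tfrac1{n-k}\right)\ \ge\ a_{k-1}a_{k+1},$$
which is precisely log-concavity, the absence of internal zeros being guaranteed by the unimodality already proven. Thus it suffices to establish Conjecture~\ref{realrooted}, and the whole effort reduces to proving real-rootedness of $W(\Tack{n}{t};x)$ for every $0\le t<n$.

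To prove real-rootedness I would first dispatch the six closed-form cases and then seek a uniform recurrence. For $t=n-1$ the polynomial is the Eulerian polynomial $A_n(x)$, and for $t=1$ it is the Narayana polynomial; both are classically real-rooted, so log-concavity is immediate there. The remaining closed forms of Theorems~\ref{twostack}, \ref{rl:nm1} and \ref{rl:nm2} are more delicate because they involve \emph{subtractions} of products of Eulerian polynomials and of the convolution $D_n(x)=\tfrac12\sum_i\binom{n-1}{i}A_i(x)A_{n-1-i}(x)$; subtraction does not preserve real-rootedness in general, so here I would argue via interlacing. The building blocks $A_i(x)A_{n-1-i}(x)$ are real-rooted, and $D_n(x)$ is a nonnegative combination of them; if one shows that the family $\{A_i(x)A_{n-1-i}(x)\}_i$ is mutually interlacing, then the specific signed combinations appearing in $W(\Tack{n}{n-2};x)$ and $W(\Tack{n}{n-3};x)$ can be shown real-rooted by tracking sign changes of the combination at the (interlaced) roots of a reference factor such as $A_{\lfloor (n-1)/2\rfloor}(x)$.

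For general $t$ I would try to build a recurrence compatible with the theory of interlacing and compatible polynomials (in the style of Br\"and\'en and of Savage--Visontai). The natural handle is the decomposition $\pi=LnR$ with $\T(\pi)=\T(R)\T(L)n$ together with the tree dictionary from the unimodality proof: $\des(\pi)$ is the number of right edges of $\Tree(\pi)$ and $\T(\pi)$ is its rpostorder reading, so $t$-revstack-sortability is a property of $\Tree(\pi)$. The goal would be a recurrence expressing the bivariate refinement $F_n(x,q)=\sum_{\pi\in\sn}x^{1+\des(\pi)}q^{\deg_{\T}(\pi)}$ in terms of smaller instances via insertion of the largest letter, and then to show that the relevant linear operator preserves real-rootedness; real-rootedness of each partial sum $\sum_{j\le t}[q^j]F_n(x,q)=W(\Tack{n}{t};x)$ would then follow from mutual interlacing of the layers $[q^j]F_n(x,q)$.

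The main obstacle is that $t$-revstack-sortability of $\pi=LnR$ is \emph{not} a product condition on $L$ and $R$: by the zigzag characterisations of Section~2, $\deg_{\T}(\pi)$ is governed by uninterrupted zigzag patterns that straddle the position of $n$, so the sortability degree couples $L$ and $R$ globally and a naive multiplicative recurrence fails. The hard part will therefore be to find the correct refined statistic — fine enough that insertion of $n$ acts by a real-rootedness-preserving operator, yet coarse enough to record the global zigzag structure. A purely combinatorial alternative, namely an injection $V_t(n,i-1)\times V_t(n,i+1)\hookrightarrow V_t(n,i)\times V_t(n,i)$ extending B\'ona's map $h$ on decreasing binary trees, runs into the same coupling difficulty. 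If the refined statistic and recurrence can be found, real-rootedness, and hence log-concavity, follows uniformly in $t$; absent it, the case analysis above already settles log-concavity for the six values of $t$ treated in the paper.
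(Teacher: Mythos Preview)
The statement you are addressing is a \emph{conjecture}, not a theorem: the paper offers no proof of it. The only remark the paper makes is the sentence immediately following Conjecture~\ref{logconcave}, namely that a proof of Conjecture~\ref{realrooted} would imply Conjecture~\ref{logconcave}. Your opening paragraph recovers exactly this observation via Newton's inequalities, so at that level you and the paper agree.

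Beyond that reduction, however, your proposal is a research programme rather than a proof. You reduce log-concavity to Conjecture~\ref{realrooted}, which is itself open, and then sketch how one \emph{might} attack real-rootedness. Several of the steps you describe are explicitly conditional (``if one shows that the family \ldots\ is mutually interlacing'', ``If the refined statistic and recurrence can be found''), and none of them is carried out. In particular:
\begin{itemize}
\item For $t=2$, Theorem~\ref{twostack} gives $W(\Tack{n}{2};x)=W(\Stack{n}{2};x)$; this is an identity, not a subtraction of Eulerian products, so your interlacing-under-subtraction outline does not apply. Proving real-rootedness here amounts to proving real-rootedness of the descent polynomial of $2$-stack sortable permutations, which the paper itself flags (just before Conjecture~\ref{realrooted}) as an unsolved problem.
\item For $t=n-2$ and $t=n-3$, you would need the mutual interlacing of the family $\{A_i(x)A_{n-1-i}(x)\}_i$ and a sign-change count strong enough to survive the particular signed combinations in Theorems~\ref{rl:nm1} and~\ref{rl:nm2}; neither is established.
\item For general $t$, you correctly identify the obstruction (the zigzag condition couples $L$ and $R$ across the position of $n$), but do not overcome it.
\end{itemize}
So there is no genuine disagreement with the paper to adjudicate: the paper does not claim a proof, and your proposal does not supply one either. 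What you have written is a reasonable outline of where the difficulties lie, consistent with the paper's own assessment that both Conjectures~\ref{realrooted} and~\ref{logconcave} remain open.
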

Note that a proof of Conjecture~\ref{realrooted} would imply Conjecture~\ref{logconcave}.
\begin{conjecture} For all $n \geq 1$,
$W(\Tack{n}{n-2};x)$ has $n$ distinct real roots: $$r^{(n)}_{n-1} < r^{(n)}_{n-2} < \cdots < r^{(n)}_{1} < r^{(n)}_{0}=0$$
and these roots are related via
$$r^{(n+1)}_{n} < r^{(n)}_{n-1} < r^{(n+1)}_{n-1} < r^{(n)}_{n-2} < r^{(n+1)}_{n-2} \cdots < r^{(n+1)}_{2} < r^{(n)}_{1} < r^{(n+1)}_{1}< r^{(n)}_{0}=0=r^{(n+1)}_{0}.$$
\end{conjecture}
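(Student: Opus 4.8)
The plan is to work from the closed form $W(\Tack{n}{n-2};x) = A_n(x) - A_{\lceil (n-1)/2\rceil}(x) A_{\lfloor (n-1)/2\rfloor}(x)$ supplied by Theorem~\ref{rl:nm1}, and to prove the real-rootedness and the interlacing simultaneously by induction on $n$. First I would record the structural features of $P_n(x) := W(\Tack{n}{n-2};x)$ that make the problem tractable. Writing $A_k(x) = x\widehat{A}_k(x)$ for the unnormalised Eulerian polynomial $\widehat{A}_k$ with $\widehat{A}_k(0)=1$, one has $P_n(x) = x\bigl(\widehat{A}_n(x) - x\,\widehat{A}_{\lceil (n-1)/2\rceil}(x)\widehat{A}_{\lfloor (n-1)/2\rfloor}(x)\bigr)$, so that $x=0$ is a simple root (matching $r^{(n)}_0=0$) and it suffices to treat the cofactor $Q_n(x) := P_n(x)/x$, a polynomial of degree $n-1$ with $Q_n(0)=1$. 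Since each Eulerian polynomial is palindromic and the two subtracted factors share the centre of symmetry of $A_n$, both $P_n$ and $Q_n$ are palindromic; hence the nonzero roots occur in reciprocal pairs, and when $n$ is even the forced factor $(x+1)$ divides out. After these reductions, the substitution $y=x+1/x$ produces a polynomial $R_n(y)$ of degree $\lfloor (n-1)/2\rfloor$, and the whole statement (distinct real non-positive roots of $P_n$, together with the interlacing of $P_n$ and $P_{n+1}$) becomes: every root of $R_n$ is real and strictly below $-2$, and the $R_n$ interlace as $n$ grows.

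For the real-rootedness itself the natural tool is the Hermite--Biehler/Obreschkoff pencil theorem: a real combination $\alpha f + \beta g$ is real-rooted for every $(\alpha,\beta)$ precisely when $f$ and $g$ have interlacing zeros. Applied to $f=\widehat{A}_n$ and $g=x\,\widehat{A}_a\widehat{A}_b$ with $a=\lceil(n-1)/2\rceil$ and $b=\lfloor(n-1)/2\rfloor$, the choice $(\alpha,\beta)=(1,-1)$ yields exactly $Q_n$, so it would be enough to prove that these two polynomials interlace. Here I would invoke the classical facts that every $\widehat{A}_k$ is real-rooted with simple negative roots (Frobenius) and that consecutive Eulerian polynomials interlace, which control the roots of $g$ as the union of $\{0\}$ and the roots of $\widehat{A}_a,\widehat{A}_b$.

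For the interlacing of $P_n$ and $P_{n+1}$, once both are known to be real-rooted I would use the standard sign-alternation criterion: $P_{n+1}$ interlaces $P_n$ in the stated way iff $P_{n+1}$ takes alternating signs at the successive roots $r^{(n)}_0=0 > r^{(n)}_1 > \cdots > r^{(n)}_{n-1}$ of $P_n$. At such a root $r$ one has $A_n(r)=A_{a}(r)A_{b}(r)$, so $P_{n+1}(r) = A_{n+1}(r) - A_{a'}(r)A_{b'}(r)$ can be rewritten purely in terms of values of Eulerian polynomials at $r$, and the task is to show that this expression changes sign each time $r$ moves to the next root of $P_n$. I expect this to follow, inductively, from the interlacing already established between consecutive Eulerian polynomials together with the parity bookkeeping $\{a,b\}\to\{a',b'\}$ (exactly one index increases by one as $n\to n+1$); I would run the two parities of $n$ as the two halves of a single induction so that the reciprocal-root and $(x+1)$-factor symmetries line up.

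The main obstacle is the degeneracy of the subtracted term. When $n=2m+1$ is odd the product is $A_m^2 = x^2\widehat{A}_m^2$, whose roots are all \emph{double}; a polynomial with a repeated root cannot properly interlace anything, so the Obreschkoff hypothesis fails literally in the odd case (and in the even case $\widehat{A}_m\widehat{A}_{m-1}$ shares the root $-1$ with $\widehat{A}_{2m}$). I would circumvent this by a continuity argument: perturb the equal factors to $\widehat{A}_m(x)\widehat{A}_m(x+\varepsilon)$ (or, more cleanly, pass to the halved polynomials $R_n$), verify the interlacing hypotheses in the perturbed, genuinely simple-rooted situation, and then let $\varepsilon\to 0$, using Hurwitz's theorem so that real-rootedness and weak interlacing survive the limit while palindromicity keeps the limiting roots simple and distinct. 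Making this continuity sketch rigorous — in particular ruling out a collision of two roots in the limit, which is exactly the strictness the conjecture asserts — is the delicate point, and will likely require an explicit three-term recurrence for the $R_n$, derived from the Eulerian recurrence $\widehat{A}_k = (1+(k-1)x)\widehat{A}_{k-1} + x(1-x)\widehat{A}_{k-1}'$, to pin down the requisite sign data; extracting that recurrence through the parity-dependent product is the crux of the whole argument.
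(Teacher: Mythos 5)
First, be aware that the paper does \emph{not} prove this statement: it is one of the paper's open conjectures, supported only by the numerically computed polynomials and roots in the appendix. So your proposal stands or falls on its own mathematics. Your preparatory reductions are correct and worth keeping: writing $A_k(x)=x\widehat{A}_k(x)$, the subtracted term $x\widehat{A}_a(x)\widehat{A}_b(x)$ (with $a+b=n-1$) is palindromic with the same centre $(n-1)/2$ as $\widehat{A}_n$, so $Q_n=P_n/x$ is palindromic, the factor $(x+1)$ is forced for even $n$, and the substitution $y=x+1/x$ reduces the conjecture to real, simple roots of $R_n$ below $-2$, interlacing in $n$. All of this is consistent with the appendix data (which, incidentally, covers the small cases $n\leq 3$ not reached by Theorem~\ref{rl:nm1}).

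The genuine gap is that your central engine fails for a reason much more serious than the double-root degeneracy you flag: $\widehat{A}_n$ and $x\widehat{A}_a\widehat{A}_b$ are nowhere near interlacing, and the pencil is genuinely not real-rooted, so no perturbation-plus-Hurwitz argument can rescue the Hermite--Biehler/Obreschkoff route. Already at $n=4$: $\widehat{A}_4=x^3+11x^2+11x+1$ has roots $\approx -9.899,\,-1,\,-0.101$, while $x\widehat{A}_2\widehat{A}_1=x(x+1)$ has roots $-1,0$; after cancelling the common factor $(x+1)$, interlacing would require the root $0$ to lie in $(-9.899,-0.101)$, which it does not. At $n=5$: $\widehat{A}_5=x^4+26x^3+66x^2+26x+1$ has roots $\approx -23.20,\,-2.32,\,-0.43,\,-0.04$, two of which lie in $(-1,0)$, while $x(x+1)^2$ has all of its roots in $\{-1,0\}$ and none in $(-23.20,-2.32)$; replacing the square by $\widehat{A}_2(x)\widehat{A}_2(x+\varepsilon)$ cannot repair this, since the failure concerns where the roots sit, not their multiplicity. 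One can even see the pencil leave the real-rooted class explicitly: $\widehat{A}_5(x)-t\,x(x+1)^2$ takes the constant value $\widehat{A}_5(-1)=16$ at $x=-1$ for every $t$, and for real $x$ near $-1$ the term $-t\,x(x+1)^2$ is nonnegative, so for large $t>0$ the pencil is strictly positive on a real neighbourhood of $-1$ while two of its roots must approach $-1$ as $t\to\infty$ --- necessarily as a complex-conjugate pair. Hence the real-rootedness of the single member $Q_n$ (coefficient exactly $1$) is not a pencil phenomenon; the hypotheses of your continuity limit are false rather than merely degenerate, and the argument never legitimately reaches the sign-alternation step for the interlacing in $n$ (which also presupposes real-rootedness of both $P_n$ and $P_{n+1}$). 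The conjecture may well be true --- the data supports it --- but a proof will have to come from something like the explicit three-term recurrence for the $R_n$ that you mention only in your final sentence, or some other mechanism tailored to this specific combination, not from zero-interlacing of $\widehat{A}_n$ against $x\widehat{A}_a\widehat{A}_b$.
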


\section{Concluding Remarks }
\begin{enumerate}
\item 
\'Ulfarsson~\cite{henning} recently classified 3-stack sortable permutations in terms of a new type of permutation pattern that he called decorated patterns.
Can a similar analysis be performed for the case of 3-revstack sortable permutations? 
The stack sort operator preserves a certain left-to-rightness, whereas this is not the case for revstack.
\item In the paper \cite{annals} the author (in collaboration with two others) classified and enumerated $(n-4)$-stack sortable permutations. It was proven that
$|\Stack{n}{n-4}| = (n-4)!(3n^4 - 18n^3-4n^2+158n-192)/3$. Determining those permutations that are $(n-4)$-revstack sortable would appear to be tractable but most likely hard. Enumerating this class would allow for Steingr\'imsson's conjecture to be proven for $t=n-4$.
\item 
How many permutations $\pi \in \sn$ contain no $t$-zigzag? 
How many permutations $\pi \in \sn$ contain no uninterrupted $t$-zigzag? 
Answers to these questions will provide bounds on the number of $t$-revstack sortable permutations.
\end{enumerate}

\section*{Appendix}
\newcommand{\haha}{\hrule \noindent}
\newcommand{\shanda}[4]{\normalsize $W(\Tack{#1}{#2};x)=$\\ $#3$.\\ \phantom{a} \hfill\footnotesize Roots: $\left(#4\right)$\\}
\newcommand{\sshanda}[4]{\normalsize $W(\Tack{#1}{#2};x) = #3$.\\ \phantom{aaaa} \hfill\footnotesize Roots: $\left(#4\right)$\\}
\newcommand{\ssshanda}[4]{\normalsize $W(\Tack{#1}{#2};x) = #3$. \phantom{aaaa} \hfill\footnotesize Roots: $\left(#4\right)$\\}
\haha
\noindent
\ssshanda{ 1 }{ 0 }{ x }{ [0] }
\haha
\ssshanda{ 2 }{ 0 }{ x }{ [0] }
\ssshanda{ 2 }{ 1 }{ x^2 + x }{ [-1, 0] }
\haha
\ssshanda{ 3 }{ 0 }{ x }{ [0] }
\ssshanda{ 3 }{ 1 }{ x^3 + 3x^2 + x }{ [-2.61803, -0.38197, 0] }
\ssshanda{ 3 }{ 2 }{ x^3 + 4x^2 + x }{ [-3.73205, -0.26795, 0] }
\haha
\ssshanda{ 4 }{ 0 }{ x }{ [0] }
\ssshanda{ 4 }{ 1 }{ x^4 + 6x^3 + 6x^2 + x }{ [  -4.79129, -1, -0.20871, 0] }
\ssshanda{ 4 }{ 2 }{ x^4 + 10x^3 + 10x^2 + x }{ [-8.88748, -1, -0.11252, 0] }
\ssshanda{ 4 }{ 3 }{ x^4 + 11x^3 + 11x^2 + x }{ [-9.89898, -1, -0.10102, 0] }
\haha
\ssshanda{ 5 }{ 0 }{ x }{ [0] }
\sshanda{ 5 }{ 1 }{ x^5 + 10x^4 + 20x^3 + 10x^2 + x }{ [-7.51264,   -1.79811, -0.55614, -0.13311, 0] }
\sshanda{ 5 }{ 2 }{ x^5 + 20x^4 + 49x^3 + 20x^2 + x }{ [-17.22204,  -2.28160, -0.43829, -0.05807, 0] }
\sshanda{ 5 }{ 3 }{ x^5 + 25x^4 + 64x^3 + 25x^2 + x }{ [-22.163124, -2.36978, -0.42198, -0.04512, 0] }
\sshanda{ 5 }{ 4 }{ x^5 + 26x^4 + 66x^3 + 26x^2 + x }{ [-23.203854, -2.32247, -0.43058, -0.04310, 0] }
\haha
\ssshanda{ 6 }{ 0 }{ x }{ [0] }
\sshanda{ 6 }{ 1 }{ x^6 + 15x^5 + 50x^4 + 50x^3 + 15x^2 + x }{ [-10.78022, -2.76541, -1, -0.36161, -0.09276, 0] }
\sshanda{ 6 }{ 2 }{ x^6 + 35x^5 + 168x^4 + 168x^3 + 35x^2 + x }{ [-29.49606, -4.23384, -1, -0.23619, -0.03390, 0] }
\sshanda{ 6 }{ 3 }{ x^6 + 50x^5 + 267x^4 + 267x^3 + 50x^2 + x }{ [-44.07961, -4.68422, -1, -0.21348, -0.02269, 0] }
\sshanda{ 6 }{ 4 }{ x^6 + 56x^5 + 297x^4 + 297x^3 + 56x^2 + x }{ [-50.20122, -4.55954, -1, -0.21932, -0.01992, 0] }
\sshanda{ 6 }{ 5 }{ x^6 + 57x^5 + 302x^4 + 302x^3 + 57x^2 + x }{ [-51.21838, -4.54193, -1, -0.22017, -0.01952, 0] }
\haha
\ssshanda{ 7 }{ 0 }{ x }{ [0] }
\sshanda{ 7 }{ 1 }{ x^7 + 21x^6 + 105x^5 + 175x^4 + 105x^3 + 21x^2 + x }{ [-14.59334, -3.89836, -1.52940, -0.65385, -0.25652, -0.06852, 0] }
\sshanda{ 7 }{ 2 }{ x^7 + 56x^6 + 462x^5 + 900x^4 + 462x^3 + 56x^2 + x }{ [-46.47035, -6.98835, -1.83034, -0.54635, -0.14310, -0.02152, 0] }
\sshanda{ 7 }{ 3 }{ x^7 + 91x^6 + 898x^5 + 1834x^4 + 898x^3 + 91x^2 + x }{ [-80.06899, -8.36871, -1.90552, -0.52479, -0.11949, -0.01249, 0] }
\sshanda{ 7 }{ 4 }{ x^7 + 112x^6 + 1113x^5 + 2258x^4 + 1113x^3 + 112x^2 + x }{ [-101.22387, -8.24273, -1.86641, -0.53579, -0.12132, -0.00988, 0] }
\sshanda{ 7 }{ 5 }{ x^7 + 119x^6 + 1183x^5 + 2398x^4 + 1183x^3 + 119x^2 + x }{ [-108.27803, -8.19127, -1.86246, -0.53692, -0.12208, -0.00924, 0] }
\sshanda{ 7 }{ 6 }{ x^7 + 120x^6 + 1191x^5 + 2416x^4 + 1191x^3 + 120x^2 + x }{ [-109.30521, -8.15963, -1.86818, -0.53528, -0.12255, -0.00915, 0] }
\haha
\ssshanda{ 8 }{ 0 }{ x }{ [0] }
\sshanda{ 8 }{ 1 }{ x^8 + 28x^7 + 196x^6 + 490x^5 + 490x^4 + 196x^3 + 28x^2 + x }{ [-18.95172, -5.19552, -2.14030, -1, -0.46722, -0.19247, -0.05277, 0] }
\sshanda{ 8 }{ 2 }{ x^8 + 84x^7 + 1092x^6 + 3630x^5 + 3630x^4 + 1092x^3 + 84x^2 + x }{ [-68.90575, -10.67971, -2.96965, -1,-0.33674, -0.09364, -0.01451, 0]}
\sshanda{ 8 }{ 3 }{x^8 + 154x^7 + 2587x^6 + 9490x^5 + 9490x^4 + 2587x^3 + 154x^2+x}{ [-135.40863, -13.95990, -3.24421, -1, -0.30824, -0.07163, -0.00739,0] }
\sshanda{ 8 }{ 4 }{ x^8 + 210x^7 + 3646x^6 + 13273x^5 + 13273x^4 + 3646x^3 + 210x^2 + x }{ [-191.30191, -14.16374, -3.14005, -1,-0.31847,-0.07060, -0.00523, 0] }
\sshanda{ 8 }{ 5 }{ x^8 + 238x^7 + 4158x^6 + 15115x^5 + 15115x^4+4158x^3+238x^2+x }{[-219.35732, -14.12477, -3.12228, -1,-0.32028, -0.07080, -0.00456, 0] }
\sshanda{ 8 }{ 6 }{ x^8 + 246x^7 + 4278x^6 + 15563x^5 + 15563x^4 + 4278x^3 + 246x^2 + x }{ [-227.49455, -13.97464, -3.13599, -1, -0.31888, -0.07156, -0.00440, 0] }
\sshanda{ 8 }{ 7 }{ x^8 + 247x^7 + 4293x^6 + 15619x^5 + 15619x^4 + 4293x^3 + 247x^2 + x }{ [-228.51096, -13.95665, -3.13765, -1, -0.31870, -0.07165, -0.00438, 0] }
\haha
\ssshanda{ 9 }{ 0 }{ x }{ [0] }
\sshanda{ 9 }{ 1 }{ x^9 + 36x^8 + 336x^7 + 1176x^6 + 1764x^5 + 1176x^4 + 336x^3 + 36x^2 + x }{ [-23.85519, -6.65620, -2.83087, -1.39601, -0.71633, -0.35325, -0.15024, -0.04192, 0] }
\sshanda{ 9 }{ 2 }{ x^9 + 120x^8 + 2310x^7 + 12012x^6 + 20449x^5 + 12012x^4 + 2310x^3 + 120x^2 + x }{ [-97.56308, -15.44345, -4.46160, -1.61264, -0.62010, -0.22414, -0.06475, -0.01025, 0] }
\sshanda{ 9 }{ 3 }{ x^9 + 246x^8 + 6621x^7 + 40116x^6 + 71403x^5 + 40116x^4 + 6621x^3 + 246x^2 + x }{ [-216.23090, -22.10085, -5.15272, -1.67433, -0.59725, -0.19407, -0.04525, -0.00462, 0] }
\sshanda{ 9 }{ 4 }{ x^9 + 372x^8 + 10737x^7 + 64936x^6 + 114962x^5 + 64936x^4 + 10737x^3 + 372x^2 + x }{ [-341.07546, -23.44302, -4.98440, -1.64183, -0.60908, -0.20063, -0.04266, -0.00293, 0] }
\sshanda{ 9 }{ 5 }{ x^9 + 456x^8 + 13402x^7 + 80984x^6 + 143230x^5 + 80984x^4 + 13402x^3 + 456x^2 + x }{ [-424.90556, -23.65930, -4.93995, -1.63740, -0.61072, -0.20243, -0.04227, -0.00235, 0] }
\sshanda{ 9 }{ 6 }{ x^9 + 492x^8 + 14352x^7 + 86678x^6 + 153426x^5 + 86678x^4 + 14352x^3 + 492x^2 + x }{ [-461.29325, -23.25795, -4.94880, -1.64479, -0.60798, -0.20207, -0.04300, -0.00217, 0] }
\sshanda{ 9 }{ 7 }{ x^9 + 501x^8 + 14586x^7 + 88091x^6 + 155946x^5 + 88091x^4 + 14586x^3 + 501x^2 + x }{ [-470.38820, -23.15789, -4.95345, -1.64559, -0.60769, -0.20188, -0.04318, -0.00213, 0] }
\sshanda{ 9 }{ 8 }{ x^9 + 502x^8 + 14608x^7 + 88234x^6 + 156190x^5 + 88234x^4 + 14608x^3 + 502x^2 + x }{ [-471.40751, -23.13604, -4.95662, -1.64474, -0.60800, -0.20175, -0.04322, -0.00212, 0] }
\haha
\ssshanda{ 10 }{ 0 }{ x }{ [0] }
\shanda{ 10 }{ 1 }{ x^{10} + 45x^9 + 540x^8 + 2520x^7 + 5292x^6 + 5292x^5 + 2520x^4 + 540x^3 + 45x^2 + x }{ [-29.30369, -8.28003, -3.60019, -1.83993, -1, -0.54350, -0.27776, -0.12077, -0.03413, 0] }
\shanda{ 10 }{ 2 }{ x^{10} + 165x^9 + 4488x^8 + 34320x^7 + 91091x^6 + 91091x^5 + 34320x^4 + 4488x^3 + 165x^2 + x }{ [-133.20312, -21.41541, -6.35121, -2.40235, -1, -0.41626, -0.15745, -0.04670, -0.00751, 0] }
\shanda{ 10 }{ 3 }{ x^{10} + 375x^9 + 15423x^8 + 145387x^7 + 421769x^6 + 421769x^5 + 145387x^4 + 15423x^3 + 375x^2 + x }{ [-329.52317, -33.53780, -7.79673, -2.59597, -1, -0.38521, -0.12826, -0.02982, -0.00303, 0] }
\shanda{ 10 }{ 4 }{ x^{10} + 627x^9 + 28952x^8 + 275897x^7 + 794694x^6 + 794694x^5 + 275897x^4 + 28952x^3 + 627x^2 + x }{ [-577.70720, -37.60046, -7.62298, -2.51172, -1, -0.39813, -0.13118, -0.02660, -0.00173, 0] }
\shanda{ 10 }{ 5 }{ x^{10} + 837x^9 + 40110x^8 + 382904x^7 + 1101090x^6 + 1101090x^5 + 382904x^4 + 40110x^3 + 837x^2 + x }{ [-786.62666, -38.77026, -7.54646, -2.49648, -1, -0.40056, -0.13251, -0.02579, -0.00127, 0] }
\shanda{ 10 }{ 6 }{ x^{10} + 957x^9 + 45476x^8 + 432834x^7 + 1245792x^6 + 1245792x^5 + 432834x^4 + 45476x^3 + 957x^2 + x }{ [-907.40761, -38.00620, -7.51389, -2.51404, -1, -0.39777, -0.13309, -0.02631, -0.00110, 0] }
\shanda{ 10 }{ 7 }{ x^{10} + 1002x^9 + 47433x^8 + 451199x^7 + 1298925x^6 + 1298925x^5 + 451199x^4 + 47433x^3 + 1002x^2 + x }{ [-952.70805, -37.70097, -7.51573, -2.51739, -1, -0.39724, -0.13305, -0.02652, -0.00105, 0] }
\shanda{ 10 }{ 8 }{ x^{10} + 1012x^9 + 47803x^8 + 454829x^7 + 1309315x^6 + 1309315x^5 + 454829x^4 + 47803x^3 + 1012x^2 + x }{ [-962.84130, -37.55584, -7.52923, -2.51564, -1, -0.39751, -0.13282, -0.02663, -0.00104, 0] }
\shanda{ 10 }{ 9 }{ x^{10} + 1013x^9 + 47840x^8 + 455192x^7 + 1310354x^6 + 1310354x^5 + 455192x^4 + 47840x^3 + 1013x^2 + x }{ [-963.85446, -37.54150, -7.53057, -2.51546, -1, -0.39754, -0.13279, -0.02664, -0.00104, 0] }

\normalsize
\section*{Acknowledgment}
The author would like to thank Einar Steingr\'imsson for sharing his conjecture and also thank the anonymous referee for many helpful suggestions.


\begin{thebibliography}{99}
\bibitem{bona.1} M. B\'{o}na. Symmetry and Unimodality in $t$-Stack Sortable Permutations. {\it{Journal of Combinatorial Theory Series A}} {\bf{98}} (2002), no. 1, 201--209.
\bibitem{bona.2} M. B\'{o}na. Corrigendum: {S}ymmetry and unimodality in {$t$}-stack sortable permutations. {\it{Journal of Combinatorial Theory Series A}} {\bf{99}} (2002), no. 1, 191--194.
\bibitem{bona.book} M. B\'{o}na. Combinatorics of Permutations. CRC Press. 2004.
\bibitem{bouvel} M. Bouvel, O. Guibert. Refined enumeration of permutations sorted with two stacks and a $D_8$-symmetry. {\it{Annals of Combinatorics}}, to appear (2013).
\bibitem{chung} F. Chung, A. Claesson, M. Dukes, R. Graham.  Descent polynomials for permutations with bounded drop size.  {\it{European Journal of Combinatorics}} {\bf{31}} (2010), no. 7, 1853--1867.
\bibitem{annals} A. Claesson, M. Dukes, E. Steingr\'imsson.  Permutations sortable by $n-4$ passes through a stack.  {\it{Annals of Combinatorics}} {\bf{14}} (2010) 45--51.
\bibitem{dulucq} S. Dulucq, S. Gire, O. Guibert.  A combinatorial proof of J. West's conjecture.  {\it{Discrete Mathematics}} {\bf{187}} (1998), 71--96.
\bibitem{knuthvol1} D.~E.~Knuth. {\it The Art of Computer Programming},
  Vol.~1, {\it Fundamental algorithms}. Addison-Wesley, Reading, 1969.
\bibitem{simion} R. Simion. Combinatorial statistics on noncrossing partitions. {\it{Journal of Combinatorial Theory Series A}} {\bf{66}} (1994), no. 2, 270–301. 
\bibitem{einar} E. Steingr\'imsson. Personal communication. 2005.
\bibitem{henning} H. \'Ulfarsson. Describing West-3-stack-sortable permutations with permutation patterns.  {\it{S\'eminaire Lotharingien de Combinatoire}} {\bf{67}} (2012), Article B67d.
\bibitem{west.phd} J. West. Permutations with forbidden subsequences and Stack sortable permutations. Ph.D. thesis, Massachusetts Institute of Technology, 1990.
\end{thebibliography}
\end{document}